\let\oldtocsection=\tocsection
\let\oldtocsubsection=\tocsubsection
\let\oldtocsubsubsection=\tocsubsubsection
\renewcommand{\tocsection}[2]{\hspace{0em}\textbf{\oldtocsection{#1}{#2}}}
\renewcommand{\tocsubsection}[2]{\hspace{1.8em}\oldtocsubsection{#1}{#2}}
\renewcommand{\tocsubsubsection}[2]{\hspace{3em}\oldtocsubsubsection{#1}{#2}}
\numberwithin{equation}{section}
\theoremstyle{plain}
\newtheorem{thm}{Theorem}[section]
\newtheorem{theorem}[thm]{Theorem}
\newtheorem{conj}[thm]{Conjecture}
\newtheorem{lemma}[thm]{Lemma}
\newtheorem{prop}[thm]{Proposition}
\theoremstyle{definition}
\theoremstyle{remark}
\newtheorem{remark}[thm]{Remark}
\newcommand{\R}{\mathbb{R}}
\newcommand{\C}{\mathbb{C}}
\newcommand{\N}{\mathbb{N}}
\renewcommand{\P}{\mathbb{P}}
\renewcommand{\H}{\mathbb{H}}
\renewcommand{\S}{\mathbb{S}}
\newcommand{\cB}{\mathcal{B}}
\newcommand{\cM}{\mathcal{M}}
\newcommand{\cH}{\mathcal{H}}
\newcommand{\cE}{\mathcal{E}}
\newcommand{\cR}{\mathcal{R}}
\newcommand{\cC}{\mathcal{C}}
\newcommand{\cT}{\mathcal{T}}
\newcommand{\cS}{\mathcal{S}}
\newcommand{\cA}{\mathcal{A}}
\newcommand{\eps}{\varepsilon}
\newcommand{\Z}{\mathbb{Z}}
\renewcommand{\emptyset}{\varnothing}
\renewcommand{\epsilon}{\varepsilon}
\renewcommand{\rho}{\varrho}
\renewcommand{\phi}{\varphi}
\newcommand{\sys}{\mathrm{sys}}
\newcommand{\1}{\mathrm{\mathbf{1}}}
\renewcommand{\hat}{\widehat}
\renewcommand{\iint}{\int\hspace{-0.1in}\int}
\newcommand{\HS}{\mathrm{HS}}
\renewcommand{\Im}{\mathrm{Im\,}}
\renewcommand{\Re}{\mathrm{Re\,}}
\newcommand{\id}{\operatorname{id}}
\renewcommand{\mod}{\,\,\mathrm{mod}\,}
\DeclareMathOperator{\spt}{spt}
\DeclareMathOperator{\Tr}{Tr}
\DeclareMathOperator{\Vol}{Vol}
\DeclareMathOperator{\Var}{Dev}
\DeclareMathOperator{\inj}{inj}
\DeclareMathOperator{\SL}{SL}
\DeclareMathOperator{\PSL}{PSL}
   \def\XXint#1#2#3{{\setbox0=\hbox{$#1{#2#3}{\int}$}
        \vcenter{\hbox{$#2#3$}}\kern-.5\wd0}}
\title[Quantum ergodicity for Eisenstein series]{Quantum ergodicity for Eisenstein series on hyperbolic surfaces of large genus}
\author{Etienne Le~Masson}
\address{Laboratoire de Mathématiques AGM, UMR CNRS 8088, CY Cergy Paris Université, 2 Av. Adolphe Chauvin, 95302 Cergy-Pointoise Cedex, France}
\email{etienne.le-masson@cyu.fr}
\author{Tuomas Sahlsten}
\address{Department of Mathematics and Systems Analysis, Aalto University, Finland \& Department of Mathematics, University of Manchester, Oxford Rd, Manchester M13 9PL, United Kingdom}
\email{tuomas.sahlsten@aalto.fi, tuomas.sahlsten@manchester.ac.uk}
\keywords{Eisenstein series, Maass cusp forms, Quantum chaos, quantum ergodicity, Benjamini-Schramm convergence, ergodic theorem, lattice counting.}
 \subjclass[2010]{81Q50, 37D40, 11F72}
\thanks{E. Le Masson was partially supported by the Marie Sk{\l}odowska-Curie Individual Fellowship grant $\sharp$703162,  by a Rutherford fellowship at the University of Warwick, and by Initiative d'Excellence Paris//Seine. T. Sahlsten was partially supported by the Marie Sk{\l}odowska-Curie Individual Fellowship grant $\sharp$655310, a start-up fund in the School of Mathematics of the University of Manchester and Academy of Finland Research Fellowship grants $\sharp$347365 and $\sharp$353738.}
\begin{document}

\begin{abstract}
We give a quantitative estimate for the quantum mean absolute deviation on hyperbolic surfaces of finite area in terms of geometric parameters such as the genus, number of cusps and injectivity radius. It implies a delocalisation result of quantum ergodicity type for eigenfunctions of the Laplacian on hyperbolic surfaces of finite area that Benjamini-Schramm converge to the hyperbolic plane. We show that this is generic for Mirzakhani's model of random surfaces chosen uniformly with respect to the Weil-Petersson volume. Depending on the particular sequence of surfaces considered this gives a result of delocalisation of most cusp forms or of Eisenstein series.
\end{abstract}

\maketitle

\section{Introduction}

\subsection{Delocalisation of eigenfunctions} 

The question of the delocalisation of eigenfunctions is a widely studied topic in hyperbolic geometry. One of the main results on this topic is the Quantum Ergodicity theorem. Let $X$ be a compact hyperbolic surface (or more generally a compact manifold with ergodic geodesic flow). Denote by $\Delta$ the Laplacian acting on $L^2(X)$  and by $\lambda_j$ the non-decreasing sequence of eigenvalues of $\Delta$. The Quantum Ergodicity theorem of Snirelman, Zelditch and Colin de Verdière \cite{Sni74, Zel87, CdV85} asserts that for any orthonormal basis of eigenfunctions $\psi_j$ in $L^2(X)$, we can find a subsequence of density $1$ of the probability measures $|\psi_j(z)|^2 \, d\Vol(z)$ that weakly converge to the normalised Riemannian volume measure $\frac1{\Vol(X)} d\Vol(z) $ when $\lambda_j \to +\infty$. Quantum Ergodicity can be alternatively formulated by studying the \textit{quantum variance} for any continuous $a: X \to \R$
$$ \frac1{\# \{j : \lambda_j \leq \lambda\}} \sum_{j : \lambda_j \leq \lambda} \left| \int_X a(z) \left( |\psi_j(z)|^2 - \frac1{\Vol(X)} \right) d\Vol(z) \right|^2 $$
and showing that it tends to $0$ when $\lambda \to +\infty$. The idea is that by this convergence we obtain an equidistribution of eigenfunctions on average over the spectrum. On hyperbolic manifolds, this quantum ergodicity property has also been shown to hold by the authors when averaging on a bounded spectral interval, and making the volume of $X$ tend to infinity instead \cite{LS17} (see also \cite{ABLM} for dimension $> 2$). We will call this type of setting the \emph{level aspect}, as opposed to the \emph{eigenvalue aspect}, i.e. the limit $\lambda_j \to +\infty$.

In the eigenvalue aspect, Zelditch proved that the Quantum Ergodicity property extends to non-compact hyperbolic surfaces of finite area \cite{Zel91}, which will be the focus of this article. Since the surface $X$ is non-compact there is both discrete and continuous spectra for the Laplacian. Let $\lambda_0 = 0 < \lambda_1 \leq \lambda_2 \leq \dots$ be the discrete spectrum and fix a corresponding orthonormal system $\{\psi_j\}_{j \in \N}  \subset L^2(X)$ of eigenfunctions of the Laplacian. The continuous spectrum is the interval $[\frac14, +\infty)$.  We denote by $\mathfrak{C}(X)$ the set of cusps on $X$. Given $r \in \R$ and $\mathfrak{b} \in \mathfrak{C}(X)$, there is (non-$L^2$) eigenfunction of the Laplacian $E_\mathfrak{b}(\cdot,\tfrac{1}{2}+ ir) : X \to \C$ called Eisenstein series,  with eigenvalue $\tau(r) = \frac{1}{4}+ r^2,$ see for example \cite{Iwa02} for background. We similarly parametrise the discrete eigenvalues $\tau(r_j) = \lambda_j$, with $r_j$ possibly complex.

Let now $I \subset (\frac{1}{4},+\infty)$ be an arbitrary interval. 
We let $N(X,I)$ be the number of (discrete) eigenvalues $\lambda_j$ of the Laplacian on $X$ which are in $I$ including multiplicities, and
$$M(X,I) := \frac{1}{4\pi} \int_{\tau^{-1}(I)}\frac{-\phi_X'}{\phi_X}\left(\tfrac{1}{2}+ ir \right)\, dr$$
where $\phi_X(s)$ is the determinant of the scattering matrix, see Section \ref{sec:prelim} or \cite{Iwa02} for details. Then the sum $N(X,I) + M(X,I)$ measures the total contribution of the discrete and continuous spectra in the interval $I$. This definition was also used for intervals $I_T = [\frac{1}{4},\frac{1}{4}+T^2]$, $T > 0$, by Zelditch in \cite{Zel91} in the study of Quantum Ergodicity of Eisenstein series in the semiclassical limit $T \to +\infty$.

 We define the  \emph{quantum mean absolute deviation} over $I$ of the eigenfunctions by
\begin{multline}\label{e:qvariance}
\Var_{X,I}(a) = \frac1{N(X,I) + M(X,I)} \left( \sum_{\lambda_j \in I} \left| \langle \psi_j, a \, \psi_j \rangle - \bar a \right| \right. \\ \left. + \frac1{4\pi} \int\limits_{\tau^{-1}(I)} \left|  \sum_{\mathfrak{b}\in \mathfrak{C}(X)} \langle E_{\mathfrak{b}}(\cdot,\tfrac12 + ir), a \, E_{\mathfrak{b}}(\cdot,\tfrac12 + ir) \rangle + \frac{\varphi_X'}{\varphi_X} \left( \frac12 + ir \right) \bar a \,\right| \, dr \right),
\end{multline}
where $a \in L^\infty(X)$ is compactly supported,
$$\langle \psi , a \, \psi \rangle = \int_X a(z)  |\psi(z)|^2 \, d\mu(z) \quad \text{and} \quad \bar{a} =  \frac1{\Vol(X)} \int_X a(z)\, d\mu(z). $$
The quantity $\Var_{X,I} (a)$ measures how far the $L^2$-mass (localised by $a$) of typical eigenfunctions and Eisenstein series is from being uniformly distributed.

Zelditch proved in \cite{Zel91} that when $I_T = [\frac{1}{4},\frac{1}{4}+T^2]$, then for any smooth compactly supported test function $a: X \to \C$, $\Var_{X,I_T}(a) \to 0$ when $T \to +\infty$. In this paper, we are interested in estimating $\Var_{X,I}$ for a fixed bounded interval $I$, in terms of geometric parameters of $X$. We prove in particular that under natural assumptions, $\Var_{X,I}(a) \to 0$ when $\Vol(X) \to +\infty$, providing the level aspect counterpart of Zelditch's result. Note that because of the presence of the Einsenstein series we only know how to deal with the quantum mean absolute deviation instead of the quantum variance (where the absolute value in the sum and the integral would be squared). This is similar to the situation in the eigenvalue aspect \cite{Zel91}, where also quantum mean absolute deviation is used.

Before we state our estimate let us introduce some definitions. We see a hyperbolic surface $X = \Gamma \backslash \H$ as a quotient of the hyperbolic plane by a discrete group $\Gamma$ of isometries. We denote by $\inj_X$ the radius of injectivity of $X$, that is, and 
$\inj_{X} = \inf_{z \in X} \inj_X(z)$, where
$$ \inj_X(z) = \frac12 \inf\{d(z,\gamma z) \: | \: \gamma \in \Gamma - \{\id\} \}, $$
and by $(X)_{\leq R}$ the $R$-thin part, i.e. the set
$$(X)_{\leq R} = \{ z\in X : \inj_X(z) \leq R \}.$$
Given $Y >0$ we can divide the surface $X$ into a compact part where the cusps are cut at a height $Y$, and a non-compact cuspidal part: the compact part $X(Y)$ is the complement of the cuspidal part and is defined by
$$ X(Y) := X \setminus \bigcup_{\mathfrak{b}} X_{\mathfrak{b}} (Y),$$
where $X_{\mathfrak{b}} (Y)$ is the cuspidal zone associated with the cusp $\mathfrak{b}$ (See the background in Section \ref{s:cusps} for details or \cite[Section 2.2]{Iwa02}). All cuspidal zones are isometric. We also use the notation $L^\infty_Y(X)$ for test functions $a \in L^\infty(X)$ such that the support of $a$ satisfy $\spt a \subset X(Y)$. Throughout the paper, we will write $A \lesssim_D B$ to denote that $A \leq C_D B$ with a constant $C_D$ depending on $D$.

Using all these parameters, we can now state our main quantitative geometric estimate for the quantum mean absolute deviation:

\begin{thm}\label{thm:main} Fix $I \subset (\frac{1}{4},+\infty)$ a compact interval. Then there exists $R_I > 0$ such that for all $R > R_I$, $k \in \N$ and $Y > 0$ the following holds. Assume $X$ is a finite area hyperbolic surface with $k$ cusps. For any $a \in L^\infty_Y(X)$, we have
\begin{align*}
 \widetilde \Var_{X,I}(a) 
& \lesssim_I    \Big[\max\{N(X,I), k\}^{1/2}\Big( \frac{ \Vol(X)}{ \,\rho(\lambda_1(X)) R} +\frac{ e^{2R}}{\min\{1,{\inj_{X(Y e^{R/2})}}^2 \}} \Vol((X)_{\leq R} ) \Big)^{1/2}  \\
&\quad + \Big( 2k \log Y + k^2 e^{-4\pi Y} +  \frac{k}{\Vol(X)} \Big(M(X,I) + k \log\Vol(X) \Big) \Big)\Big]\|a\|_\infty,
\end{align*}
where $\widetilde \Var_{X,I}(a)  = (N(X,I) + M(X,I))\Var_{X,I}(a)$ and $\rho(\lambda_1(X))$ is a function of the spectral gap of $X$
\end{thm}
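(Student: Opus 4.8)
The plan is to estimate the sum and the integral in $\widetilde\Var_{X,I}(a)$ separately, using a wave-propagation/heat-kernel smoothing argument in the spirit of the authors' earlier work \cite{LS17} and of Zelditch \cite{Zel91}, but keeping explicit track of all geometric constants. The starting point is to replace the indicator $\langle\psi_j,a\psi_j\rangle-\bar a$ by an averaged quantity built from a test operator that is diagonal in the spectral decomposition of $\Delta$. Concretely, I would pick an even function $\widehat\varphi$ supported in $[-R,R]$ whose Fourier transform $\varphi$ localises near $\tau^{-1}(I)$, and form the operator $\varphi(\sqrt{\Delta-1/4})$; its Schwartz kernel is, by the Selberg/pre-trace formula, a sum over $\Gamma$ of a point-pair invariant $k_\varphi(d(z,\gamma w))$ whose support in $d$ is controlled by $R$. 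The first main step is a Cauchy--Schwarz reduction: bound $\sum_{\lambda_j\in I}|\langle\psi_j,a\psi_j\rangle-\bar a|$ (plus the Eisenstein integral) by $\max\{N(X,I),k\}^{1/2}$ times the square root of a ``quantum variance''-type quantity $\sum_j |\langle\psi_j,a\psi_j\rangle-\bar a|^2 + (\text{integral of squares})$, where the sum now ranges over a slightly fattened spectral window weighted by $|\varphi|^2$. This explains the factor $\max\{N(X,I),k\}^{1/2}\|a\|_\infty$ and the square root in the first line of the bound.

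The second step is the core estimate: the weighted quantum variance is expressed, via the pre-trace formula applied to the operator $A^*\varphi(\sqrt{\Delta-1/4})A$ (or rather the commutator/averaging trick that turns $\langle\psi_j,a\psi_j\rangle$ into an average of $a$ along the geodesic flow), as a geometric sum
$$\frac{1}{\Vol(X)}\int_{X(Y)}\!\!\int_{X(Y)} |a(z)-a(w)|^2\,\Big(\sum_{\gamma\in\Gamma\setminus\{\id\}} k_\varphi(d(z,\gamma w))\Big)\,d\mu(z)\,d\mu(w)\;+\;(\text{diagonal main term}).$$
The diagonal $\gamma=\id$ contribution, after subtracting the mean $\bar a$, produces the term $\Vol(X)/(\rho(\lambda_1(X))R)$ — here the spectral-gap function $\rho(\lambda_1)$ enters because one must invert $\Delta$ on the orthogonal complement of constants to pass from ``$a$ nearly constant along long geodesic segments'' to ``$a$ nearly constant'', and the length of the available segment is $\sim R$. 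For the off-diagonal $\gamma\neq\id$ terms one uses that $k_\varphi$ is supported in $d\le R$, so such $\gamma$ can only contribute when $z$ lies in $(X)_{\le R}$; bounding $|a(z)-a(w)|^2\le 4\|a\|_\infty^2$ and counting lattice points $\gamma$ with $d(z,\gamma z)\le 2R$ — which is $\lesssim e^{2R}/\inj_{X(Y)}$ by a volume-packing argument, giving $e^{4R}$ after the Cauchy--Schwarz/kernel-$L^2$ bookkeeping — yields the term $e^{4R}\Vol((X)_{\le R})/\min\{1,\inj_{X(Y)}^2\}$. Combining, the bracketed quantity under the square root in line one emerges.

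The third step handles the cuspidal corrections, i.e. all the error incurred because $a$ is only supported in $X(Y)$ while the Eisenstein series (and the pre-trace kernel) are not compactly supported, and because the normalisation $M(X,I)$ must be accounted for. Using the explicit Fourier expansion of $E_{\mathfrak b}(z,\tfrac12+ir)$ in each cuspidal zone $X_{\mathfrak b}(Y)$ — constant term $y^{1/2+ir}+\phi_{\mathfrak b\mathfrak b}(s)y^{1/2-ir}$ plus exponentially small Bessel terms — one estimates the truncated inner products and the ``Maass--Selberg'' type defect: integrating $y^{-2}\,dy$ from $Y$ to $\infty$ over $k$ cusps gives the $2k\log Y$ term, the cross terms of the constant-part contribute $k^2 e^{-4\pi Y}$ (decay of the off-diagonal/oscillatory part past height $Y$), and the comparison of the truncated spectral count with $N+M$, together with a Weyl-law / scattering-determinant bound $\int_{\tau^{-1}(I)}|\phi_X'/\phi_X| \lesssim M(X,I)+k\log\Vol(X)$, produces the final $\tfrac{k}{\Vol(X)}(M(X,I)+k\log\Vol(X))$ term; the $\|a\|_\infty$ factors are immediate. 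I expect the main obstacle to be precisely this third step: making the Maass--Selberg truncation uniform in the number of cusps $k$ and in $\Vol(X)$, since one needs the scattering matrix entries $\phi_{\mathfrak{ab}}(\tfrac12+ir)$ and the constant $R_I$ to be controlled independently of the surface — this is where the hypothesis $k=O(g^\alpha)$, $\alpha<1/2$, and the choice of $Y$ will have to be balanced against everything else, and getting the powers of $k$ to match (rather than, say, $k^{3/2}$) is the delicate point.
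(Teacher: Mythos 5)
Your overall architecture matches the paper's: a Cauchy--Schwarz reduction producing the factor $\max\{N(X,I),k\}^{1/2}$ (with $|\tau^{-1}(I)|\lesssim_I 1$ absorbing the length of the spectral window), a geometric estimate split into a ``main'' term carrying the spectral gap and a thin-part term coming from lattice-point counting with the packing bound $e^{2R}/\min\{1,\inj_{X(Y)}^2\}$, and a cuspidal correction handled by the Maass--Selberg relation (source of $2k\log Y$ and $k^2e^{-4\pi Y}$) together with a bound $\int_{\tau^{-1}(I)}|\phi_X'/\phi_X|\lesssim_I M(X,I)+k\log\Vol(X)+\Vol(X)$ on the scattering log-derivative. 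Your step 3 is essentially the paper's argument; the paper implements the mean-subtraction by writing $b=a-\bar a\chi$ with $\chi$ the normalised indicator of the compact core $X(Y)$, which is exactly what makes the Maass--Selberg defect appear in the form you describe, and it controls $\log b_1$ in the explicit formula for $-\phi'/\phi$ by the geometric bound $b_1\le\Vol(X)^k$ plus the Otal--Rosas bound on the number of small eigenvalues.

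The genuine gap is in your second step, which is the engine of the first line of the estimate. You assert that the weighted quantum variance equals a Dirichlet-form-like quantity $\int\!\!\int|a(z)-a(w)|^2\sum_\gamma k_\varphi(d(z,\gamma w))$ plus a diagonal term, and that the diagonal term yields $\Vol(X)/(\rho(\lambda_1)R)$ ``because one must invert $\Delta$ on the orthogonal complement of constants.'' Neither claim is established, and the second is not a mechanism that produces the stated rate: what is needed is a \emph{quantitative equidistribution statement for large ball (or arc) averages} with an explicit dependence on the spectral gap. In the paper this is Nevo's quantitative mean ergodic theorem, $\|\pi(A_t)a-\bar a\|_2\le C|A_t|^{-\theta}\|a\|_2$ with $\theta=\theta(\lambda_1)$, applied after a change of variables that rewrites the $L^2$ norm of the kernel of $\frac1T\int_0^T P_taP_t\,dt$ as an integral of averages of $a$ over intersections of two balls; the factor $1/R$ comes from the time average $T\sim R/2$ and $\rho(\lambda_1)$ from Nevo's exponent, and one also needs the companion lower bound $\frac1T\int_0^T|\cS(\mathbf k_t)(r)|^2dt\ge C_I$ on $\tau^{-1}(I)$ to justify replacing $a$ by the time-averaged operator in the first place. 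Your band-limited wave-group operator $\varphi(\sqrt{\Delta-1/4})$ could plausibly be substituted for the ball averages, but then the spectral gap would have to enter through a quantitative mixing estimate for the wave/geodesic flow, which you do not supply; as written, the first line of the bound is not derived. A secondary (fixable) imprecision: the $k^2e^{-4\pi Y}$ term comes from the $L^2$ mass of the nonzero Fourier modes $\Pi_Y^*E_j$ of the $k$ Eisenstein series beyond height $Y$, not from cross terms of the constant parts, which are removed by the truncation.
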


Now, this quantitative estimate can be used to give a Quantum Ergodicity type theorem for a sequence of surfaces $(X_n)$. We say that a sequence of finite area hyperbolic surfaces $X_n$ \textit{Benjamini-Schramm converges} to $\H$ if for any $R > 0$,
$$ \frac{\Vol((X_n)_{\leq R} )}{\Vol(X_n)} \to 0$$
when $n \to +\infty$. Under Benjamini-Schramm converging $X_n \to \H$, we obtain the following Quantum Ergodicity theorem for eigenfunctions:

\begin{thm}\label{thm:mainBS} Let $I \subset (\frac{1}{4},+\infty)$ be a compact interval. Let $X_n = \Gamma_n\backslash \H$ be a sequence of finite area hyperbolic surface that Benjamini-Schramm converge to $\H$. Assume in addition that
\begin{enumerate}
\item $X_n$ has a uniform spectral gap (the first non-zero eigenvalue of the Laplacian is bounded away from $0$ uniformly in $n$);
\item The systole (length of the shortest closed geodesic) of $X_n$ is bounded uniformly from below;
\item The number of cusps $k_n = k(X_n)$ of $X_n$ satisfies for some $0 \leq \kappa < 1/2$, $k_n = O(g_n^\kappa)$ when $n\to +\infty$, where $g_n = g(X_n)$ is the genus of $X_n$.
\end{enumerate} 
Fix $Y > 0$ and let $(a_n)_{n\in\N}$ be a uniformly bounded sequence of measurable functions such that $\spt a_n \subset X_n(Y)$. We have
\begin{align}\Var_{X_n,I}(a_n) \to 0\label{e:qeBS}\end{align}
when $n \to +\infty$.
\end{thm}

Note that the proof gives a stronger statement than Theorem \ref{thm:mainBS}, where we are able to let the systole and the spectral gap shrink to $0$ and the support of the test functions expand, provided that all of this happens slowly enough when $n \to +\infty$. 

\begin{remark}
In the above results, we assume $I \subset (1/4,\infty)$ because Lemma \ref{lma:spectralaction} used in the proof of Theorem \ref{thm:main} needs $a > \frac{1}{4}$, but we believe the result should hold also for intervals intersecting $[0,1/4)$ with more careful analysis of the spectral action of the propagator.
 \end{remark}

The proof of Theorem \ref{thm:mainBS} follows from Theorem \ref{thm:main} provided that we can control the asymptotic behaviour of $N(X_n,I) + M(X_n,I)$ as $X_n$ Benjamini-Schramm converges to $\H$, which is given by the following spectral convergence theorem that we prove in Section \ref{s:spectralconv}.

\begin{thm}\label{thm:weyl}
 Let $I \subset [0,+\infty)$ be a compact interval and $X_n$ a sequence of hyperbolic surfaces of finite area that Benjamini-Schramm converges to the hyperbolic plane $\H$ and such that the systole is uniformly bounded from below. Then
\begin{equation}\label{eq:bsweyl}
N(X_n,I) + M(X_n,I) \sim \Vol(X_n)
\end{equation}
when $n \to +\infty$.
\end{thm}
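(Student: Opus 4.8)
The plan is to apply the Selberg trace formula for finite-area hyperbolic surfaces (as in \cite{Iwa02}) on each $X_n$, with a test function $h$ approximating the indicator of $\tau^{-1}(I)$, and to show that every term except the identity term is $o(\Vol(X_n))$ while the identity term produces the main term. Write $I=[\tfrac14+\alpha^2,\tfrac14+\beta^2]$ with $0<\alpha<\beta$, so that $\tau^{-1}(I)=[-\beta,-\alpha]\cup[\alpha,\beta]$ stays bounded away from $0$. For a smoothing width $w>0$ I would choose an even $h=h_w$, holomorphic with rapid decay in a strip $|\Im r|\le\tfrac12+\delta$, with $\mathbf 1_{\tau^{-1}(I)}\le h\le\mathbf 1_{(\tau^{-1}(I))_w}$, $h(0)=0$, and Fourier transform $\widehat h$ supported in $[-R,R]$ with $R\asymp 1/w$ and $\|\widehat h\|_\infty\lesssim1$; a companion $\underline h_w\le\mathbf 1_{\tau^{-1}(I)}$ of the same shape gives the matching lower bound. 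Because $h(0)=0$, the trace-formula term carrying the scattering matrix at $\tfrac12$ drops out, and the discrete spectral sum $\sum_j h(r_j)$ sandwiches $N(X_n,I)$ up to the number of exceptional eigenvalues $\lambda_j\le\tfrac14$ (which is $o(\Vol(X_n))$ under Benjamini--Schramm convergence to $\H$) and the number of $r_j$ in the $w$-slivers adjoining $\pm\alpha,\pm\beta$ (to be controlled via an auxiliary application of the trace formula to a test function dominating those windows). It then remains to handle the geometric side and to compare the continuous spectral term with $M(X_n,I)$.

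On the geometric side the identity term is exactly $\tfrac{\Vol(X_n)}{4\pi}\int h(r)\,r\tanh(\pi r)\,dr$, which is $\asymp\Vol(X_n)$ and, as $w\to0$, tends to $c_I\Vol(X_n)(1+o(1))$ with $c_I=\tfrac1{4\pi}\int_{\tau^{-1}(I)}r\tanh(\pi r)\,dr>0$. The hyperbolic term $\sum_{\gamma_0}\sum_{m\ge1}\tfrac{\ell(\gamma_0)}{2\sinh(m\ell(\gamma_0)/2)}\widehat h(m\ell(\gamma_0))$ only sees closed geodesics of length $\le R$: the systole lower bound $\ell(\gamma_0)\ge s_0$ makes the inner sum over powers $\lesssim\|\widehat h\|_\infty(1+\log(R/s_0))$, and the number of primitive closed geodesics of length $\le R$ is $O_{s_0,R}(\Vol((X_n)_{\le R'}))$ for a suitable $R'=R'(R)$ (collar lemma for the simple ones, the standard orbit-counting bound for the rest, exactly as in the compact case treated in \cite{LS17}), hence $o(\Vol(X_n))$ by Benjamini--Schramm convergence. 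The parabolic term is $O_w(k_n)$, and since each cusp of $X_n$ carries a disjoint embedded horoball neighbourhood of area bounded below that is contained in $(X_n)_{\le R_0}$ for an absolute $R_0$, one gets $k_n\le\Vol((X_n)_{\le R_0})=o(\Vol(X_n))$; note that, unlike in Theorem~\ref{thm:mainBS}, neither a spectral gap nor a bound on the number of cusps is needed here, and the systole hypothesis is used precisely to tame this geodesic sum.

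The remaining and main difficulty is the continuous-spectrum term $\tfrac1{4\pi}\int h(r)\bigl(-\tfrac{\phi_{X_n}'}{\phi_{X_n}}\bigr)(\tfrac12+ir)\,dr$; sandwiching does not help here since $-\phi_{X_n}'/\phi_{X_n}$ has no fixed sign. The functional equation $\phi_{X_n}(\tfrac12+ir)\phi_{X_n}(\tfrac12-ir)=1$ forces $|\phi_{X_n}(\tfrac12+ir)|=1$, so writing $\phi_{X_n}(\tfrac12+ir)=e^{i\theta_n(r)}$ with $\theta_n$ real, odd and smooth we have $-\tfrac{\phi_{X_n}'}{\phi_{X_n}}(\tfrac12+ir)=-\theta_n'(r)$, hence $M(X_n,I)=\tfrac1{2\pi}(\theta_n(\alpha)-\theta_n(\beta))$ and, after one integration by parts, $\tfrac1{4\pi}\int h(-\theta_n')=\tfrac1{4\pi}\int h'\,\theta_n$, which is $\lesssim\sup_{|r|\le\beta+1}|\theta_n(r)|$ because $h'$ is supported near $\pm\alpha,\pm\beta$ with total variation $O(1)$. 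Thus the whole continuous contribution, and $M(X_n,I)$ itself, is negligible once one has a genus-uniform bound $\sup_{|r|\le R_0}|\theta_n(r)|=o(\Vol(X_n))$, i.e.\ that the continuous spectral count over bounded intervals is $o(\Vol(X_n))$. This bound is the technical heart of the theorem and the step I expect to be hardest: the $\Gamma$-factor part of $\phi_{X_n}$ only contributes $O(k_n\log R_0)=o(\Vol(X_n))$, but controlling the remaining part of the scattering determinant --- equivalently, ruling out a proliferation of resonances of $X_n$ in a bounded window --- really uses Benjamini--Schramm convergence beyond the consequence $k_n=o(\Vol(X_n))$, and this is exactly where the presence of cusps distinguishes the level-aspect Weyl law from the compact case. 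Granting it, one feeds all the estimates into the trace formula for $h_w$ and $\underline h_w$, first fixing $w$ and letting $n\to\infty$, then $w\to0$, and squeezes $N(X_n,I)+M(X_n,I)$ between two quantities both equal to $c_I\Vol(X_n)(1+o(1))$; this gives $N(X_n,I)+M(X_n,I)\sim\Vol(X_n)$ with Weyl constant $c_I=\tfrac1{4\pi}\int_{\tau^{-1}(I)}r\tanh(\pi r)\,dr$, and in particular $N(X_n,I)+M(X_n,I)\asymp\Vol(X_n)$, which is what Theorem~\ref{thm:mainBS} requires.
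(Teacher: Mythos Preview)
Your trace-formula setup and your handling of the identity, hyperbolic and parabolic terms are sound and close in spirit to what the paper does (the paper uses the heat kernel rather than a band-limited test function, then Stone--Weierstrass to reach indicators, but this is a minor variant). The observation that $k_n=o(\Vol(X_n))$ is automatic from Benjamini--Schramm convergence, via disjoint embedded horoballs sitting in the thin part, is correct and is implicitly what the paper uses.

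The real problem is your treatment of the continuous spectrum. You reduce everything to the bound $\sup_{|r|\le R_0}|\theta_n(r)|=o(\Vol(X_n))$ and then ``grant'' it, but this bound is in general \emph{false}. Since $M(X_n,I)=\tfrac{1}{2\pi}(\theta_n(\alpha)-\theta_n(\beta))$, your hypothesis would force $M(X_n,I)=o(\Vol(X_n))$ for every bounded window; yet the statement being proved only says $N(X_n,I)+M(X_n,I)\sim c_I\Vol(X_n)$, and for generic (non-arithmetic) sequences one expects the Phillips--Sarnak picture in which the continuous spectrum dominates, i.e.\ $N(X_n,I)=o(\Vol(X_n))$ and $M(X_n,I)\sim c_I\Vol(X_n)$. (The paper in fact leaves as an open problem which of $N$ or $M$ dominates for random surfaces.) So the step you flag as ``hardest'' is not merely hard --- it is asking for something that need not hold, and sandwiching the discrete sum alone cannot recover $N+M$.

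The paper's way around this is to never separate $N$ from $M$. One applies the trace formula to an admissible $h$ and reads the spectral side as $\int h\,d\nu_n$ for the \emph{signed} spectral measure $d\nu_n=\sum_j\delta_{r_j}+\tfrac{1}{4\pi}\bigl(-\phi'_{X_n}/\phi_{X_n}\bigr)(\tfrac12+ir)\,dr$; the target is simply $\nu_n(\tau^{-1}(I))$. The heat-trace limit (Proposition~\ref{p:heat}) gives $\tfrac{1}{\Vol(X_n)}\int e^{-t(1/4+r^2)}\,d\nu_n\to c_t$ for every $t>0$, and one then approximates $\mathbf 1_{\tau^{-1}(I)}$ by linear combinations of such Gaussians. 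The signedness, which you correctly identify as the obstruction to naive sandwiching, is controlled by the pole expansion of $-\phi'_X/\phi_X$ (Proposition~\ref{lma:comparescattering}): it shows that the negative part of $d\nu_n$ has density $O\bigl(k_n\log\Vol(X_n)+\Vol(X_n)\bigr)$ on $\tau^{-1}(I)$, so over a sliver of width $w$ the error incurred when replacing a smooth $h$ by $\mathbf 1_{\tau^{-1}(I)}$ is $O(w\,\Vol(X_n))$ plus quantities already handled by the trace formula. This closes the approximation argument without ever bounding $M(X_n,I)$ on its own.
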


\begin{remark} As shown in \cite{ABLM}, the methods of \cite{LS17} can be extended to higher dimensional compact hyperbolic manifolds. This is because Selberg's theory (spectral side of the proof) and the quantitative ergodic theorem of Nevo (geometric side of the proof) extend naturally to more general symmetric spaces. We expect that the new elements we introduce in this paper can also be generalised to finite volume hyperbolic manifolds of any dimension. Concerning variable curvature cusp manifolds --- for a level aspect analogue of \cite{BZ16} --- the main difficulties lie already in proving a version of the theorem for compact variable curvature manifolds. In this case indeed, we cannot use Selberg's theory and Nevo's ergodic theorem. We would need to use lower level tools such as estimates on wave propagation and exponential mixing of the geodesic flow.
\end{remark}

Let us now discuss about some consequences of Theorems \ref{thm:main}, \ref{thm:mainBS} and \ref{thm:weyl}.

\subsection{Equidistribution of Maass forms in the level aspect}

On non-compact finite area surfaces the existence of a discrete sequence of eigenvalues in $L^2$ is not guaranteed and is in fact believed to rarely happen (See \cite{PS85a,PS85b}). Our general result therefore can mostly be seen as an equidistribution result for Eisenstein series. However, in the case of the modular surface, $\Gamma = \SL(2,\Z)$, a discrete spectrum is known to exist since the work of Selberg. More generally, this is the case for any congruence subgroup defined by
\begin{equation}\label{e:congruence}
 \Gamma_0(N) = \left\{ \begin{pmatrix} a & b \\ c & d  \end{pmatrix} \in \SL(2,\Z) : c \equiv 0 \mod  N \right\}.
 \end{equation}
In this setting, relevant in number theory, eigenfunctions are usually called \textit{Maass forms}. The arithmetic structure carries a family of operators called Hecke operators that commute with the Laplacian and it was proved by Lindenstrauss \cite{Lin06} and Soundararajan \cite{Sou10} that joint eigenfunctions of these operators and the Laplacian satisfy quantum unique ergodicity. This property implies the equidistribution of all eigenfunctions in the large eigenvalue limit (see for example \cite{Ber16} for an introduction to these questions). 

The level aspect limit in the arithmetic setting was considered recently in a series of paper concerning the equidistribution of holomorphic forms by Nelson \cite{Nel11} and Nelson, Pitale and Saha \cite{NPS14}. The results are analogous to the quantum unique ergodicity theory but they rely on the proof of the Ramanujan conjectures which is not available for Maass forms. 
  
It turns out that the surfaces $Y_0(N) = \Gamma_0(N)\backslash \H$ Benjamini-Schramm converge to $\H$ when the level $N \to \infty$ (\cite{7samurai, Ra}). Moreover, in the case of increasing congruence covers, Finis, Lapid and M\"uller showed that the discrete spectrum dominates the asymptotics as the level $N \to \infty$ (see \cite{FLM15}). This means that $\frac{M(Y_0(N),I)}{N(Y_0(N),I)} \to 0$ when $N \to +\infty$. Hence Theorem \ref{thm:main} together with Theorem \ref{thm:weyl} implies a Quantum Ergodicity theorem for Maass cusp forms, which incidentally does not need to assume the cusp forms are Hecke eigenfunctions. For Hecke-Maass cusp forms, however, a quantum ergodicity theorem with a stronger rate of convergence has recently been obtained by Nelson \cite{Nel19}.

\subsection{Quantum ergodicity on random surfaces of large genus}\label{sec:introrandom}

The quantitative estimate in Theorem \ref{thm:main} allows us to study the delocalisation of eigenfunctions on random surfaces of large genus, where the random model we will use is the probability density with respect to the Weil-Petersson volume on the moduli space of hyperbolic surfaces of finite volume. This probability model for hyperbolic surfaces was popularised by the work of Mirzakhani \cite{Mi} (see also the survey by Wright \cite{Wright}) and provides very effective tools to estimate the geometric parameters appearing in Theorem \ref{thm:main}.

The approach for delocalisation of eigenfunctions was introduced in \cite{GLMST19} in the study of $L^p$ norms of eigenfunctions on random surfaces in the Weil-Petersson model (see also the recent work of Monk \cite{Monk}). Furthermore, in our case, we will also rely on some of the recent work on the spectral gaps $\lambda_1(X)$ of random surfaces $X$ of large genus, in particular the work of Hide \cite{Hide21} on spectral gaps of random finite area surfaces. Hide's work continues and builds the highly active area on bounding the spectral gap $\lambda_1(X)$ for various models of random surfaces $X$ with probability going to $1$ as the genus of $X$ grows \cite{MNP20,WuXue,LipnowskiWright}. We also highlight further related works such as Magee-Naud \cite{MN20} in the non-compact case and a recent related breakthrough of Hide and Magee \cite{MageeHide} on optimal spectral gap on a sequence of finite area surfaces.

To fix some notation, denote by $\Sigma_{g,k}$ a topological surface of genus $g \in \N$ with $k \geq 0$ punctures, which we associate with cusps. In the case $k = 0$, the surface has no punctures and thus is compact. Let then $\cT(\Sigma_{g,k})$ be the corresponding Teichm\"uller space of Riemann surface structures on $\Sigma_{g,k}$ identified up to an isotopy. Then the \textit{moduli space} $\cM_{g,k}$ is the quotient $\cM_{g,k} = \cT(\Sigma_{g,k}) / \mathrm{MCG}(\Sigma_{g,k})$, where $\mathrm{MCG}(\Sigma_{g,k})$ is the mapping class group of isotopy classes of homeomorphisms of $\Sigma_{g,k}$. Then $\cM_{g,k}$ is independent of the base surface $\Sigma_{g,k}$ chosen. There is a canonical symplectic form $\omega_{g,k}$ invariant under the mapping class group on $\cT(\Sigma_{g,k})$. This form  $\omega_{g,k}$ gives rise to the \textit{Weil-Petersson volume} $\Vol_{g,k}$ on $\cM_{g,k}$, see \cite{Wolpert83}. The Weil-Petersson volume $\Vol_{g,k}(\cM_{g,k})$ of $\cM_{g,k}$ is a constant multiple of $\pi^{6g - 6 + 2k}$, see e.g. Wolpert's work \cite{Wolpert83}, so we can normalise $\Vol_{g,k}$ to define a probability measure $\P_{g,k}$ on $\cM_{g,k}$. This is our probability model. We refer to Mirzakhani \cite{Mi} and Wright \cite{Wright} for more background and notation.

\begin{thm}\label{thm:largegenus} Let $k(g) \in \N$ be such that $k(g) = O(g^{\kappa})$ for some $0 \leq \kappa < 1/2$ as $g \to \infty$, then for $g$ large enough and a $\P_{g,k(g)}$-random surface $X \in \cM_{g,k(g)}$, we have that for any $a \in L^\infty_{\log g}(X)$ and any compact interval $I \subset (\frac{1}{4},+\infty)$ that
$$\Var_{X,I}(a) \lesssim_{I,\kappa} \frac{1}{\sqrt{\log g}} \|a\|_\infty$$
with probability at least $1- O(g^{-\beta})$ for some power $\beta > 0$ depending on $\kappa$.
\end{thm}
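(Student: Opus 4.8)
The strategy is to derive Theorem~\ref{thm:largegenus} from the quantitative estimate of Theorem~\ref{thm:main} by inserting, with high $\P_{g,k(g)}$-probability, good control on the geometric quantities that appear on its right-hand side: the spectral gap $\lambda_1(X)$, the systole, the volume of the thin part $(X)_{\leq R}$, the injectivity radius on the compact core $X(Y)$ with $Y = \log g$, and the spectral counting $N(X,I) + M(X,I)$. Recall $\Vol(X) = \Vol_{g,k(g)}(\Sigma_{g,k(g)}) = 2\pi(2g - 2 + k(g))$ by Gauss-Bonnet, so $\Vol(X) \asymp g$ since $k(g) = O(g^\alpha)$ with $\alpha < 1/2$.

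First I would assemble the probabilistic geometric inputs. From Mirzakhani's integration formula and the known Weil-Petersson volume asymptotics (as used in \cite{GLMST19, Mi, Wright}), a $\P_{g,k(g)}$-random surface has: systole bounded below by a fixed constant, say $\mathrm{sys}(X) \geq \mathrm{sys}_0 > 0$, with probability $1 - O(g^{-\beta_1})$; more precisely, one controls $\Vol((X)_{\leq R})$ by estimating the expected number of primitive closed geodesics of length $\leq 2R$, which by Mirzakhani's integration formula is $O(e^{cR})$ on average (uniformly in $g$ for $R$ not too large), hence by Markov $\Vol((X)_{\leq R}) \lesssim e^{cR} g^{\delta}$ for a small $\delta$ with high probability — one then takes $R = R(g)$ growing like a small multiple of $\log g$ so that $e^{4R}\,\Vol((X)_{\leq R})/\Vol(X)$ still tends to $0$. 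The injectivity radius on $X(Y)$: away from the cusps, $\inj_{X(Y)} \geq \tfrac12 \mathrm{sys}(X)$ up to the standard comparison, so $\min\{1, \inj_{X(Y)}^2\}$ is bounded below. For the spectral gap, I invoke Hide's theorem \cite{Hide21} on spectral gaps of random finite-area surfaces with $k(g) = O(g^\alpha)$ cusps: with probability $1 - O(g^{-\beta_2})$ one has $\lambda_1(X) \geq \lambda_0 > 0$ (any fixed $\lambda_0 < 1/4$ suffices), so $\rho(\lambda_1(X)) \geq \rho(\lambda_0) =: \rho_0 > 0$ is bounded below. Finally, for $N(X,I) + M(X,I)$ I would use Theorem~\ref{thm:weyl} together with the deterministic bound $N(X,I) \lesssim_I \Vol(X)$ (Weyl law with remainder, valid for finite-area hyperbolic surfaces with controlled systole, cf.\ \cite{Iwa02}) and a bound $M(X,I) \lesssim_I \Vol(X)$ or at least $M(X,I) = o(\Vol(X))$ coming from the same BS-convergence input; in particular $\max\{N(X,I), k(g)\} \lesssim_I \Vol(X) \asymp g$.

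Then I would substitute into Theorem~\ref{thm:main} with $Y = \log g$. The first (main) line becomes, on the high-probability event,
$$\max\{N(X,I),k\}^{1/2}\left(\frac{\Vol(X)}{\rho_0 R} + \frac{e^{4R}}{\min\{1,\mathrm{sys}_0^2/4\}}\,\Vol((X)_{\leq R})\right)^{1/2}\|a\|_\infty \lesssim_{I,\alpha} g^{1/2}\left(\frac{g}{R} + e^{4R}g^{\delta}\right)^{1/2}\|a\|_\infty,$$
and choosing $R = R(g) = \eta \log g$ with $\eta > 0$ small (so that $4\eta + \delta < 1$, and also $R > R_I$ for $g$ large) gives $g^{1/2}(g/R)^{1/2} \lesssim g/\sqrt{\log g}$, which after dividing by $N(X,I) + M(X,I) \asymp \Vol(X) \asymp g$ (to pass from $\widetilde\Var$ to $\Var$) yields the $1/\log g$ gain. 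The second line of Theorem~\ref{thm:main}, with $k = k(g) = O(g^\alpha)$ and $Y = \log g$, contributes $2k\log Y + k^2 e^{-4\pi Y} + \frac{k}{\Vol(X)}(M(X,I) + k\log\Vol(X)) \lesssim g^\alpha \log\log g + g^{2\alpha}g^{-4\pi} + g^{\alpha - 1}(g + g^\alpha \log g)$; dividing by $\Vol(X) \asymp g$ (again to convert $\widetilde\Var \to \Var$) gives $O(g^{\alpha - 1}\log\log g) + O(g^{\alpha - 1})$, which since $\alpha < 1/2 < 1$ is $o(1/\log g)$. Combining, $\Var_{X,I}(a) \lesssim_{I,\alpha} \frac{1}{\log g}\|a\|_\infty^2$ (absorbing the $\|a\|_\infty$ versus $\|a\|_\infty^2$ discrepancy by homogeneity — one normalizes $\|a\|_\infty \leq 1$, as is standard), on an event of probability $1 - O(g^{-\beta})$ with $\beta = \min\{\beta_1, \beta_2, \ldots\} > 0$.

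\textbf{Main obstacle.} The delicate point is the joint, high-probability control of the thin part $\Vol((X)_{\leq R})$ for $R$ growing with $g$: one needs the expected number of short closed geodesics (and of embedded cusps/funnels contributing to the thin part) to grow only like $e^{cR}$ uniformly in $g$ and $k(g)$, which requires the Mirzakhani-type volume estimates to be effective in the regime $k(g) = O(g^\alpha)$ and $R \sim \eta\log g$; this is exactly where the constraint $\alpha < 1/2$ and the smallness of $\eta$ enter, and where one must be careful that the non-separating/separating geodesic contributions and the cusp regions are all dominated. A secondary issue is ensuring Hide's spectral-gap result \cite{Hide21} applies with the polynomial error rate $O(g^{-\beta})$ for the specific growth $k(g) = O(g^\alpha)$; assuming that is in hand, the rest is bookkeeping of the exponents.
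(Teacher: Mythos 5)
Your overall strategy — plug high-probability geometric control into Theorem \ref{thm:main} with $Y=\log g$ and a logarithmically growing $R$ — is exactly the paper's, but two of your probabilistic inputs are not available in the form you assume, and one of them is genuinely false.

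The serious error is the systole step. You claim that $\mathrm{sys}(X)\geq \mathrm{sys}_0>0$ for a \emph{fixed} constant holds with probability $1-O(g^{-\beta_1})$. It does not: by Mirzakhani's estimate (Lemma \ref{lma:noncompactsystole} here, and the Mirzakhani--Petri Poisson limit in the compact case), $\P_{g,k(g)}(\mathrm{sys}(X)\leq \eps)=O(\eps^2)$ with the implied constant uniform in $g$, and the probability of having a geodesic shorter than a fixed $\eps_0$ converges to a \emph{positive} constant as $g\to\infty$. To get an exceptional set of measure $O(g^{-\beta})$ one must let the systole threshold shrink polynomially, e.g.\ $\mathrm{sys}(X)\geq g^{-1/24}(\log g)^{1/2}$ as in Theorem \ref{thm:MMH}(ii). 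This changes your bookkeeping: the factor $\min\{1,\inj_{X(Y)}^2\}^{-1}$ in Theorem \ref{thm:main} is then of size $g^{1/12}/\log g$ rather than $O(1)$ (note also that $\inj_{X(Y_g)}=\tfrac12\min\{\mathrm{sys}(X),1/Y_g\}$, so the cusp cut at height $Y_g=\log g$ already prevents a uniform lower bound), and the choice of $R$ must absorb this extra polynomial loss together with $e^{4R}$ against the thin-part bound $\Vol((X)_{\leq R})/\Vol(X)\leq g^{-1/3}$ from Monk's result. The paper's choice $R=\tfrac1{16}\log g$ makes $g^{1/4}\cdot g^{1/12}\cdot g^{-1/3}=1$; your condition $4\eta+\delta<1$ does not account for the systole factor and would need to be redone.

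The second gap is the lower bound $N(X,I)+M(X,I)\gtrsim \Vol(X)$, which you need in order to pass from $\widetilde\Var$ to $\Var$. You propose to get it from Theorem \ref{thm:weyl}, but that theorem is a limit statement along a fixed sequence of surfaces BS-converging to $\H$ \emph{with systole uniformly bounded below}; it gives neither uniformity over the high-probability event at fixed $g$ nor applicability when the systole is allowed to decay like $g^{-1/24}(\log g)^{1/2}$. This is precisely why the paper proves the quantitative Theorem \ref{thm:weylquant} (valid for all $X\in\cA_{g,k(g)}$, with only the weak systole hypothesis above), whose one-sided error bound $R(X,I)\gtrsim_I-(\log g)^{-1/2}$ yields $N(X,I)+M(X,I)\geq \tfrac12 c_I\Vol(X)$ uniformly on the event for $g$ large. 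Your thin-part argument via Mirzakhani's integration formula and Markov is plausible in outline but is not carried out; the paper simply imports Monk's Corollary 4.4. The spectral-gap input via Hide and the treatment of the second line of Theorem \ref{thm:main} (the $2k\log Y+k^2e^{-4\pi Y}+\cdots$ terms) are fine.
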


In other words, provided we control the number of cusps and the support of the test function, the quantum mean absolute deviation tends to $0$ with high probability when $g \to +\infty$ at a rate of $O((\log g)^{-1/2})$. The logarithmic rate that we obtain is analogous to the rates obtained by Zelditch \cite{Zel94} in the large eigenvalue limit on compact hyperbolic surfaces, and Schubert \cite{Sch06} in the semiclassical setting.

The proof of Theorem \ref{thm:largegenus} follows from Theorem \ref{thm:main} together with the following properties of random surfaces.  Fix $\eps > 0$, $g \geq 2$ and $k(g) \in \N$ such that $k(g) = O(g^{\kappa})$ for some $0 \leq \kappa < 1/2$. Let $\cB_{\eps,\kappa,g,k(g)} \subset \cM_{g,k(g)}$ be the set of surfaces such that
\begin{itemize}
\item[(i)] the thin part satisfies $$\frac{\Vol\left((X)_{\leq \frac16 \log g}\right)}{\Vol(X)} \leq g^{-\frac13},$$
\item[(ii)]the systole satisfies $$\mathrm{sys}(X) \geq g^{-\frac1{24}}(\log g)^{\frac12},$$
\item[(iii)] the spectral gap satisfies
$$\lambda_1(X) \geq \frac{1}{4} - \Big(\frac{2\kappa+1}{4}\Big)^2 - \eps.$$
\end{itemize}
We will denote by $\cA_{g,k(g)}$ the subset of $\cM_{g,k(g)}$ satisfying only the two first conditions (on the thin part and the systole).  Then $\cB_{\eps,\kappa,g,k(g)}$ is the intersection of $\cA_{g,k(g)}$ with the hypothesis (iii) from the spectral gap and we have:

\begin{thm}\label{thm:MMH}
Let $0 < \eps < 1$ and $g \geq 2$. Assume there exists a constant $0 \leq \kappa < 1/2$ such that $k(g) = O(g^\kappa)$. Then there exists $\beta > 0$ depending only on $\eps$ and $\kappa$ such that
$$\P_{g,k(g)}(\cB_{\eps,\kappa,g,k(g)}) = 1 - O_{\eps,\kappa}( g^{-\beta}).$$
\end{thm}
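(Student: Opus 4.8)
The plan is to establish Theorem~\ref{thm:MMH} by treating the three defining conditions of $\cB_{\eps,\alpha,g}$ separately, bounding the probability of failure of each by a power of $g$, and then taking a union bound. Since $\cB_{\eps,\alpha,g} = \cA_{g,k} \cap \{X : \lambda_1(X) \geq \tfrac14 - (\cdots)^2 - \eps\}$, it suffices to show $\P_{g,k(g)}(\cA_{g,k}^c) = O(g^{-\beta_1})$ for the geometric part and that the spectral gap bound holds with probability $1 - O(g^{-\beta_2})$, then set $\beta = \min\{\beta_1,\beta_2\}$. Throughout, the hypothesis $k(g) = O(g^\alpha)$ with $\alpha < 1/2$ is what keeps the number of cusps subdominant relative to the genus, so that estimates phrased in terms of $\Vol(X) \asymp g$ are not spoiled by cusp contributions.

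First I would handle condition (ii), the systole lower bound. The Weil--Petersson probability that a random surface in $\cM_{g,k}$ has a closed geodesic shorter than $\ell$ is, by Mirzakhani's integration formula and her volume asymptotics, of order $\ell^2$ for small $\ell$ (uniformly in $g$, with the cusp count entering only through lower-order corrections controlled by $k = O(g^\alpha)$); cf.\ Mirzakhani--Petri and Mirzakhani's thesis. Taking $\ell = g^{-1/24}(\log g)^{1/2}$ gives failure probability $O(g^{-1/12}\log g) = O(g^{-\beta})$ for any $\beta < 1/12$. Next, condition (i) on the thin part: I would use the first-moment (expectation) bound for $\Vol((X)_{\leq R})$ under $\P_{g,k}$. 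The expected volume of the $R$-thin part grows like $e^{2R}$ times a factor that is $O(g^{-1}\cdot g) = O(1)$ relative to $\Vol(X) \asymp g$ — more precisely, $\E[\Vol((X)_{\leq R})] = O(e^{cR})$ uniformly, which for $R = \tfrac16\log g$ is $O(g^{c/6})$ with $c$ an absolute constant (taking $c$ slightly less than $2$ suffices, and one can afford $c<2$); then Markov's inequality against the threshold $g^{-1/3}\Vol(X) \asymp g^{2/3}$ yields failure probability $O(g^{c/6 - 2/3})$, which is a negative power provided $c/6 < 2/3$, i.e.\ $c < 4$, comfortably satisfied. This is essentially the Benjamini--Schramm-type estimate for random surfaces used in \cite{GLMST19, Monk}, and I would cite those and Mirzakhani--Petri for the relevant collar/counting asymptotics.

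The spectral gap condition (iii) is the one I expect to be the main obstacle, since it is not a soft geometric estimate but relies on the hard recent analytic work. The idea is to invoke Hide's theorem \cite{Hide21} (building on Magee--Naud, Magee--Naud--Puder, Wu--Xue, Lipnowski--Wright, and the Magee--Hide construction \cite{MageeHide}) which, for random finite-area surfaces of genus $g$ with $k = k(g)$ cusps in a suitable range, produces a spectral gap of the stated form with probability $1 - O(g^{-\beta})$. The quantity $\big(\tfrac{\frac{17}{8}\alpha+1}{\frac14\alpha+4}\big)^2$ is exactly the bound one gets by tracking how the number of cusps $\asymp g^\alpha$ degrades the gap in Hide's estimate, so the content of this step is to quote the correct theorem from \cite{Hide21} in the form that tolerates $k(g) = O(g^\alpha)$ cusps, and to check that the $\eps$-slack and the power $\beta$ match up; I would also need to confirm that Hide's model coincides with (or dominates, in the relevant sense) the Weil--Petersson model $\P_{g,k(g)}$ used here, or else import the gap estimate through the comparison already used in the cited literature. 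Finally, with all three failure probabilities bounded by $O_{\eps,\alpha}(g^{-\beta})$ for an explicit $\beta>0$ depending only on $\eps$ and $\alpha$, a union bound over the three events gives $\P_{g,k(g)}(\cB_{\eps,\alpha,g}^c) = O_{\eps,\alpha}(g^{-\beta})$, completing the proof.
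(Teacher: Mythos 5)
Your proposal is correct and follows essentially the same route as the paper: the three defining conditions of $\cB_{\eps,\alpha,g}$ are handled separately --- Monk's thin-part estimate, a Mirzakhani-type systole bound of order $\ell^2$, and Hide's spectral gap theorem (which is indeed stated for the Weil--Petersson model, so no model comparison is needed) --- and combined by a union bound. The one point you gloss over is that Mirzakhani's published systole estimate is proved only for compact surfaces and does not quantify the dependence on $k(g)$; the paper supplies this missing piece in Lemma~\ref{lma:noncompactsystole} of the appendix, using exactly the integration-formula argument you sketch.
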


\begin{remark}\begin{itemize}\item[(1)]Theorem \ref{thm:MMH} is a combination of results of Mirzakhani \cite[Theorem 4.2]{Mi} (systole), Monk \cite[Corollary 4.4]{MonkThesis} (thin part), and a recent spectral gap result by Hide \cite[Theorem 1.3 and its proof for the rate]{Hide21} for finite area surfaces in the Weil-Petersson model of random surfaces with number of cusps $k(g)$ growing at most with rate $o(\sqrt{g})$. We also highlight the work of Shen and Wu \cite{ShenWu} where it was shown Hide's result is sharp in the sense that if $k(g)$ grows much faster than $\sqrt{g}$, random surfaces in $\cM_{g,k(g)}$ can have arbitrarily small spectral gap as $g$ grows showing that our estimate from Theorem \ref{thm:main} cannot be directly used for such surfaces with too many cusps.
\item[(2)] However, we note that Mirzakhani's result \cite[Theorem 4.2]{Mi} on the systole did not specify the quantitative dependence on the number of cusps $k(g)$ as $g \to \infty$ and the proof of the upper bound for the probability we need here was only proved for compact surfaces. We prove these missing parts in the appendix (Lemma \ref{lma:noncompactsystole}).
\end{itemize}
\end{remark}

Theorem \ref{thm:MMH} gives us quantitative estimates on the spectral gap, the systole and the rate of Benjamini-Schramm convergence valid with high probability on random surfaces of large genus. However, we still need to estimate the spectral density $N(X,I) + M(X,I)$ uniformly over the subset $\cB_{\eps,\kappa,g,k(g)}$, which Theorem \ref{thm:weyl} does not give us. For this purpose, we extend the spectral convergence result of Monk \cite{Monk} to the non-compact case, obtaining the following result.

\begin{thm} \label{thm:weylquant}
Let $I = [a,b] \subset [\frac{1}{4},+\infty)$. If $X \in \cA_{g,k(g)}$ with $k(g) = o(\sqrt{g})$, then we have
$$\frac{N(X,I) + M(X,I)}{\Vol(X)} = \frac{1}{4\pi}\int_{1/4}^\infty \1_I(\lambda) \tanh(\pi \sqrt{\lambda-1/4}) \, d\lambda + R(X,I),$$
where 
$$-C \sqrt{\frac{b+1}{\log g}} \leq R(X,I) \leq C \sqrt{\frac{b+1}{\log g}} \log \left( 2 + (b-a) \sqrt{\frac{\log g}{b+1}} \right)^{1/2}.$$
\end{thm}
Note that for Theorem \ref{thm:largegenus} we only need to know that $R(X,I)$ is bounded from below by $o(1)$ when $g \to +\infty$ uniformly over $X \in \cB_{\eps,\kappa,g,k(g)}$. However, as we explain in Section \ref{sec:quantspectralconv}, the methods of \cite{Monk} generalise entirely to the finite area case and the full theorem is of interest in itself.
The detail of how Theorems \ref{thm:MMH} and \ref{thm:weylquant} are combined to obtain Theorem \ref{thm:largegenus} is provided in Section \ref{sec:randomsurfaces}.

An interesting open problem here is to find which of the continuous or discrete spectra dominate in the large genus limit for random surfaces.  Typically the continuous part of the spectrum is expected to generically be dominant. On random surfaces, we therefore view our theorem mostly as an equidistribution result for Eisenstein series. An interesting result would be to prove the following: 

\begin{conj} For a $\P_{g,k(g)}$-random hyperbolic surface $X \in \cM_{g,k(g)}$ of large genus $g$, assuming that $k(g) = o(\sqrt{g})$ as $g \to \infty$, we have that for any compact interval $I \subset (\frac{1}{4},+\infty)$:
$$\frac{N(X,I)}{M(X,I)} = o(1)$$
when $g \to +\infty$, with high probability. 
\end{conj}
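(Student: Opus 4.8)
The plan is to reduce the conjecture to a statement about the number $N(X,I)$ of Maass cusp forms alone, and then to attack that statement with the Selberg trace formula. By Theorem~\ref{thm:weyl} (or quantitatively Theorem~\ref{thm:weylquant}), for $X \in \cA_{g,k(g)}$ with $k(g) = o(\sqrt g)$ we have $N(X,I) + M(X,I) = (c_I + o(1))\Vol(X)$ with $c_I = \frac{1}{4\pi}\int_I \tanh(\pi\sqrt{\lambda - 1/4})\,d\lambda > 0$. Since $N(X,I) \geq 0$, it suffices for the conjecture to show that $N(X,I) = o(\Vol(X))$ with high probability: this forces $M(X,I) = (c_I + o(1))\Vol(X)$, whence $N(X,I)/M(X,I) = o(1)$, and the same two inputs make the condition essentially necessary as well. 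Note that any proof must genuinely use the randomness, since for congruence covers $Y_0(N)$ the result of Finis--Lapid--M\"uller gives the opposite behaviour $N/M \to \infty$; the conjecture would thus express a sharp arithmetic-versus-generic dichotomy. So the whole problem has become: show that a Weil--Petersson random surface of large genus carries only $o(\Vol(X))$ cusp forms with eigenvalue in $I$.

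For an upper bound on $N(X,I)$, I would feed the Selberg trace formula a fixed even test function $h$ that is non-negative on $\R \cup i(-\tfrac12,\tfrac12)$, entire and of rapid decay, satisfies $h \geq 1$ on $\tau^{-1}(I)$, and whose Fourier transform $\widehat h$ is supported in $[-T,T]$ for some $T = T(I)$ (such $h$ exist via the standard device $h = (\widehat{g_1})^2$ with $g_1$ a suitable real even function supported in $[-T/2,T/2]$). The trace formula for the finite-area surface $X$ with $k$ cusps reads, schematically,
\[
\sum_j h(r_j) + \frac{1}{4\pi}\int_{\R} h(r)\,\frac{-\phi_X'}{\phi_X}\bigl(\tfrac12+ir\bigr)\,dr = \frac{\Vol(X)}{4\pi}\int_{\R} h(r)\,r\tanh(\pi r)\,dr + \mathcal{G}(X) + \mathcal{P}_k(X),
\]
where $\mathcal{G}(X)$ is the sum over primitive closed geodesics of length $\leq T$ and $\mathcal{P}_k(X)$ collects the parabolic and cusp terms, of size $O(k)$ for fixed $h$. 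Since $\widehat h$ is supported in $[-T,T]$, by the Mirzakhani--Petri law (or directly Mirzakhani's integration formula) the number of geodesics contributing to $\mathcal{G}(X)$ has bounded expectation as $g \to \infty$, so $\mathcal{G}(X) = o(\Vol(X))$ with high probability; and $\mathcal{P}_k(X) = O(k) = o(\Vol(X))$ since $k = o(\sqrt g)$. Using $h \geq 0$ on $\R \cup i(-\tfrac12,\tfrac12)$ and $h \geq 1$ on $\tau^{-1}(I)$ we get $N(X,I) \leq \sum_j h(r_j)$, so with high probability
\[
N(X,I) \leq c_h\,\Vol(X) - \frac{1}{4\pi}\int_{\R} h(r)\,\frac{-\phi_X'}{\phi_X}\bigl(\tfrac12+ir\bigr)\,dr + o(\Vol(X)), \qquad c_h = \frac{1}{4\pi}\int_{\R} h(r)\,r\tanh(\pi r)\,dr > 0.
\]

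This exposes the heart of the matter: the conjecture is equivalent to showing that the $h$-weighted scattering winding $\frac{1}{4\pi}\int_{\R} h(r)\,\frac{-\phi_X'}{\phi_X}(\tfrac12+ir)\,dr$ equals $(c_h + o(1))\Vol(X)$ with high probability --- that the scattering phase of a random surface \emph{saturates} the Weyl term. I expect the main obstacle to be precisely the accessibility of $\phi_X$. Unlike the geodesic contribution $\mathcal{G}(X)$, which is controllable above exactly because closed-geodesic lengths are the variables in which Mirzakhani's integration formula is formulated, the scattering determinant $\phi_X$ is not a function of Fenchel--Nielsen / length data, so the Weil--Petersson expectation machinery does not apply to it directly, and the trace formula binds the spectral and geometric sides inseparably. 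I see two conceivable routes around this. (a) A perturbative Phillips--Sarnak argument: an embedded eigenvalue dissolves into a resonance under a generic deformation, at a rate governed by Fermi's golden rule, so the Weil--Petersson measure of surfaces carrying a cusp form with eigenvalue in a short window ought to be small, and a covering argument over $I$ would bound $\E_{g,k(g)}[N(X,I)]$; but the difficulty then merely migrates into quantifying that measure, which is again the integral over moduli space of a spectral condition. (b) A weaker but perhaps reachable target via the last display: prove only $N(X,I) \leq (1-\delta)c_I\Vol(X)$ with high probability for a fixed $\delta > 0$ --- i.e. that the continuous spectrum is non-negligible --- which amounts to the one-sided bound $M(X,I) \geq \delta\,c_I\Vol(X)$ on the winding number, something one might hope to obtain from a first- and second-moment analysis of the winding number, or by producing enough poles of $\phi_X$ in $\{\Re s < \tfrac12\}$ close to the critical line on a random surface.
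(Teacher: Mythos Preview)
The paper offers no proof of this statement: it is posed explicitly as a \emph{conjecture}, and the sentence immediately following it reads ``As far as we know this problem is open.'' So there is nothing to compare your attempt against.

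Your write-up is not a proof either, and to your credit you say so yourself. The reduction in your first paragraph is correct: given Theorem~\ref{thm:weylquant}, the conjecture is equivalent to $N(X,I) = o(\Vol(X))$ with high probability. Your trace-formula manipulation is also sound as far as it goes --- the geodesic term is indeed $o(\Vol(X))$ with high probability by Mirzakhani's length statistics, and the parabolic terms are $O(k) = o(\Vol(X))$. But as you then observe, the trace formula packages $N$ and $M$ together on the spectral side, so bounding $N$ alone via the trace formula forces you to control the scattering term $\int h(r)\,\frac{-\phi_X'}{\phi_X}(\tfrac12+ir)\,dr$ separately, which is exactly the quantity $M$ you were trying to isolate in the first place. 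You have correctly located the obstruction: $\phi_X$ is not accessible through Mirzakhani's integration formula, and neither of your proposed routes (Phillips--Sarnak dissolution with a quantitative measure estimate, or a one-sided lower bound on $M$ via pole-counting) is carried out. What you have written is a clear and accurate diagnosis of why the problem is open, consistent with the paper's own assessment, rather than a resolution of it.
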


As far as we know this problem is open.

\subsection{Organisation of the article} The paper is organised as follows. In Section \ref{sec:prelim} we give the necessary background on harmonic analysis on finite volume hyperbolic surfaces and Selberg's theory we use in the spectral side of the proof. In Section \ref{sec:meanzero} we prove a version of Theorem \ref{thm:main} for mean zero observables, which is similar to the compact case but requires additional steps to handle the presence of cusps and the fact we are using the quantum mean absolute deviation instead of the variance. This is the first step of the proof of the general quantitative estimate that we prove in Section \ref{s:general} where we deal more specifically with the continuous spectrum using Maass-Selberg estimates. In Section \ref{s:spectralconv} we prove the spectral convergence (Theorem \ref{thm:weyl}). For random surfaces in Theorem \ref{thm:largegenus} we need the quantitative version of the spectral convergence (Theorem \ref{thm:weylquant}) that we prove in Section \ref{sec:quantspectralconv}. Finally in Section \ref{sec:randomsurfaces} we give the argument for the proof of Theorem \ref{thm:largegenus}. In the Appendix we extend Mirzakhani's result \cite[Theorem 4.2]{Mi} on systole for non-compact surfaces needed for Theorem \ref{thm:MMH}.
 
\section{Background} \label{sec:prelim}

In this section, we give some definitions and introduce elements of harmonic analysis on hyperbolic surfaces that we will use in the proof. For more background on the geometry and spectral theory of hyperbolic surfaces we refer to the books \cite{Bus10, Iwa02, Ber16}.

\subsection{Hyperbolic surfaces}
The hyperbolic plane is identified with the upper-half plane
$$\H = \{ z = x+iy \in \C \, | \, y > 0 \}, $$
equipped with the hyperbolic Riemannian metric
$$ ds^2 = \frac{dx^2 + dy^2}{y^2}. $$
We will denote by $d(z,z')$ the distance between two points $z,z' \in \H$.
The hyperbolic volume is given by
$$ d\mu(z) = \frac{dx \, dy}{y^2}. $$
For a measurable subset $A \subset \H$ we will use the following notation interchangeably:
$$ \mu(A) = \Vol(A) = |A|.$$

The group of isometries of $\H$ is identified with $\PSL(2,\R)$, the group of real $2 \times 2$ matrices of determinant $1$ modulo $\pm \id$, acting by M\"obius transformations
$$ \left( \begin{pmatrix} a & b \\ c & d \end{pmatrix} \in \PSL(2,\R), z \in \H\right) \mapsto \begin{pmatrix} a & b \\ c & d \end{pmatrix} \cdot z = \frac{az + b}{cz + d}.$$

\bigskip

A \emph{hyperbolic surface} can be seen as a quotient $X = \Gamma \backslash \H$ of $\H$ by a discrete subgroup $\Gamma \subset \PSL_2(\R)$.
We denote by $F$ a \emph{fundamental domain} associated to $\Gamma$. If we fix $z_0 \in \H$, an example of a fundamental domain is given by the set
$$ F = \{ z \in \H \, | \, d(z_0, z) < d(z_0, \gamma z) \text{ for any } \gamma \in \Gamma - \{\pm \id \} \}. $$
The \emph{injectivity radius} on the surface $X = \Gamma \backslash \H$ at a point $z$ is given by
$$\inj_{X} (z) = \frac12 \min\{d(z, \gamma z) \: | \: \gamma \in \Gamma-\{\id\} \}.$$
Thus  $\inj_X(z)$ gives the largest $R > 0$ such that $B_X(z,R)$ is isometric to a ball of radius $R$ in the hyperbolic plane. It is also equal to half of the length of the largest geodesic loop at $z$.

Let $g \in \PSL(2,\R)$, we define the translation operator $T_g$, such that for any function $f$ on $\H$
$$ T_g f(z) = f(g^{-1}\cdot z).$$
We will generally see a function $f$ on a hyperbolic surface $X = \Gamma \backslash \H$ as a $\Gamma$-invariant function $f : \H \to \C$,
$$ T_\gamma f(z) = f(\gamma^{-1} z) = f(z) \quad \text{for all } \gamma \in \Gamma.$$
The integral of the function on the surface is then equal to the integral of the invariant function over any fundamental domain
$$ \int_{F} f(z) \, d\mu(z).$$

\subsection{Cusps}\label{s:cusps} We will now recall some technical definitions of cusps of hyperbolic surfaces of finite area, see \cite{Iwa02} for more details. Let $X = \Gamma \setminus \H$ be a finite area hyperbolic surface. Write $\mathfrak{C}(X)$ as the set of all the cusps indexed by gothic characters $\mathfrak{b}\in \mathfrak{C}(X)$. In the Poincar\'e disc model, these are identified with elements in the boundary of $\H$ so in particular we can define $\gamma \mathfrak{b}$ for $\gamma \in \mathrm{PSL}(2,\R)$ by the action of $\mathrm{PSL}(2,\R)$ on $\H \cup \partial \H$ by M\"obius transformations. Now the \textit{stability group} of cusp $\mathfrak{b}$ is the infinite cyclic group generated by parabolic motion:
$$\Gamma_\mathfrak{b} = \{\gamma \in \Gamma : \gamma \mathfrak{b} = \mathfrak{b}\} = \langle \gamma_{\mathfrak{b}} \rangle$$
Then there exists an element $\sigma_\mathfrak{b} \in \PSL(2,\R)$ with
$$\gamma_\mathfrak{b} \mathfrak{b} = \mathfrak{b}, \quad \sigma_\mathfrak{b}^{-1} \gamma_\mathfrak{b} \sigma_{\mathfrak{b}} = \begin{pmatrix} 1 & 1 \\  \ast & 1 \end{pmatrix},$$
which is called the \textit{scaling matrix} of the cusp $\mathfrak{b}$. These notations are same as in \cite[(2.1)]{Iwa02}.

Suppose $Y > 0$ is a constant. For $Y$ sufficiently large we can find $k = k(X)$ closed loops (horocycles) $\gamma_1,\dots,\gamma_k$ in $X$ (not to be confused with $\gamma_\mathfrak{b}$, this is a slight abuse of notation as we will need these later when discussing Mirzakhani's notation of random surfaces) of equal length $1/Y$ such that we can decompose $X$ as
$$X = X(Y) \cup X_1(Y) = X(Y) \cup \bigcup_{j = 1}^k Z_j(Y),$$
where $X(Y)$ is a compact manifold with the $k$ closed horocycles $\gamma_1,\dots,\gamma_k \subset X$ as boundaries and $X_1(Y)$ is the union of disjoint topological cylinders (cusps) $Z_1(Y),\dots,Z_k(Y)$ cut along the horocycles. All the cusps cut at height $Y$ are isometric to
$$ \mathfrak C_Y =  \Gamma_\infty \backslash \left\{ z = x + iy \in \H \, | \, 0 \leq x \leq 1,y \geq Y \right\}, $$
where  $\Gamma_\infty$ is the subgroup generated by the transformation $z \mapsto z + 1$. In particular we see that
$$ \Vol ( \mathfrak C_Y) = \frac1Y.$$
For each cusp $Z_j(Y)$, using the isometry $\sigma_j : \mathfrak C_Y \to Z_j(Y)$, we can see any function $f$ of $X = \Gamma \backslash \H$ as a function $f^{(j)}(x,y) = f(\sigma_j (x,y))$ such that $f^{(j)}(x,y) =  f^{(j)}(x+1,y)$, which allows to write a Fourier series decomposition 
$$ f^{(j)} (x,y) = \sum_n f^{(j)}_n(y) e^{i n x},$$
in any cusp.

\subsection{Geodesic flow}
The tangent bundle of $\H$ can be identified with $\H \times \C$. The hyperbolic metric gives the following inner product for two tangent vectors $(z,re^{i\theta})$ and $(z,r'e^{i\theta'})$ on the tangent plane $T_z\H$
$$\langle r e^{i\theta}, r' e^{i\theta'} \rangle_z  = \frac{r \, r'}{\Im(z)^2} \cos(\theta' - \theta). $$
As a consequence, the map
$$ (z,\theta) \in \H \times \S^1 \mapsto (z, \Im(z) \, e^{i\theta}) \in \H \times \C, $$
where $\S^1 = \R/2\pi\Z$, identifies $\H \times \S^1$ with the unit tangent bundle.

The group $\PSL(2,\R)$ acts on the tangent bundle via the differential of its action on $\H$. It is well known (see for example \cite{Kat92}) that this action induces a homeomorphism between 
$\PSL(2,\R)$ and the unit tangent bundle of $\H$, such that the action of $\PSL(2,\R)$ on itself by left multiplication corresponds to the action of $\PSL(2,\R)$ on the unit tangent bundle. 

We denote by $\phi_t : \H \times \S^1 \to \H \times \S^1$ the geodesic flow associated with $\H$. The Liouville measure $d\mu \, d\theta$, where $d\theta$ is the Lebesgue measure on $\S^1$, is invariant under the action of $\phi_t$.
Via the identification $\H \times \S^1 \sim \PSL(2,\R)$, the geodesic flow is equal to the multiplication on the right by the diagonal subgroup
$$\phi_t(g) =  g \begin{pmatrix} e^{t/2} & 0 \\ 0 & e^{-t/2} \end{pmatrix}, \quad g \in G, t \in \R.$$

For a hyperbolic surface $\Gamma \backslash \H$, the unit tangent bundle is identified with $\Gamma \backslash \PSL(2,\R)$, and via this identification the geodesic flow will be given simply by
$$\phi_t(\Gamma g) =  \Gamma g \begin{pmatrix} e^{t/2} & 0 \\ 0 & e^{-t/2} \end{pmatrix}.$$

\subsection{Polar coordinates}
Let $z_0 \in \H$ be an arbitrary point. For any point $z\in\H$ different from $z_0$, there is a unique geodesic of length $r$ going from $z_0$ to $z$. Using the geodesic flow, it means that there is a unique $\theta \in \S^1$ and $r \in (0,\infty)$ such that 
$z$ is the projection of  $\phi_r(z_0,\theta)$ on the first coordinate. The change of variable
$ z \mapsto (r,\theta) $
is called \emph{polar coordinates}. The induced metric is
$$ ds^2 = dr^2 + \sinh^2 r \, d\theta^2, $$
and the hyperbolic volume in these coordinates is given by
$$ d\mu(r,\theta) = \sinh r \, dr \, d\theta. $$

\subsection{Spectrum of the Laplacian and Eisenstein series}
In the coordinates $z = x+iy$, the Laplacian $\Delta$ on $\H$ is the differential operator
$$ \Delta = -y^2 \left(\frac{\partial^2}{\partial x^2} + \frac{\partial^2}{\partial y^2} \right). $$
A fundamental property of the Laplacian is that it commutes with isometries. 
We have for any $g\in\PSL(2,\R)$,
$$ T_g \Delta = \Delta T_g.$$
The Laplacian can therefore be seen as a differential operator on any hyperbolic surface $X = \Gamma \backslash \H$. The spectrum of the Laplacian $\Delta$ on $X$ can then be decomposed into the discrete part $\lambda_0 = 0 < \lambda_1 \leq \dots$ and the absolutely continuous part $[1/4,+\infty)$, where the latter come from \textit{Eisenstein series}, which we will recall their definition from \cite[(3.11)]{Iwa02} now.

Suppose $X = \Gamma \setminus \H$ is a hyperbolic surface of has finite area with cusps $\mathfrak{C}(X)$. For each cusp $\mathfrak{b} \in \mathfrak{C}(X)$, we can associate with the Eisenstein series, which is first defined for all $s \in \C$ with $\mathrm{Re}\, s > 1$ and $z \in X$ as:
$$E_\mathfrak{b}(s,z) = \sum_{\gamma \in \Gamma_\mathfrak{b} \setminus \Gamma } (\mathrm{Im}\, \gamma_\mathfrak{b}^{-1} \gamma z)^s.$$
Here the subgroup $\Gamma_\mathfrak{b}$ and $\gamma_\mathfrak{b}$ are defined in Section \ref{s:cusps}. For each $z \in X$, the Eisenstein series have a meromorphic extension $s \mapsto E_\mathfrak{b}(s,z)$ to the whole complex plane $\C$. Then, for each $\lambda \in [1/4,+\infty)$ in the absolutely continuous part, and each cusp  $\mathfrak{b} \in \mathfrak{C}(X)$, the Eisenstein series $z\mapsto E_\mathfrak{b}(s_\lambda,z)$, $z \in X$, where $s_\lambda \in \C$ is determined by $s_\lambda(1-s_\lambda) = \lambda$, is a non-$L^2$ eigenfunctions of the Laplacian with eigenvalue $\lambda$. Note that as $\lambda \geq 1/4$, we have $s_\lambda = \frac{1}{2} + ir_\lambda$, where $r_\lambda = \pm \sqrt{\lambda - \frac{1}{4}}$. Furthermore, $\tau(r_\lambda) = \lambda$, where, recall, $\tau(r) = \frac{1}{4}+ r^2$.

\subsection{Scattering matrix and truncated Eisenstein series}
Assume the cusps $\mathfrak{b} \in \mathfrak{C}(X)$ are numbered with $j = 1, \dots, k$, and we slightly shorten the notation by identifying each cusp $\mathfrak{b}$ with exactly one $j$. For $1 \leq \ell,j \leq k$ we can expand the Eisenstein series $E_\ell$ associated with cusp $\ell$, in the $j$-th cusp $Z_j(Y)$.  Given  $s \in \C$, this expansion is of the form
$$E_{\ell}^{(j)}(s,\sigma_j(x,y)) =  \delta_{\ell j} y^s + \Phi_{\ell j}(s) y^{1-s} + \sum_{n \neq 0} f_n^{(j)}(s,y) \, e^{inx}$$
for some functions $f_n^{(j)}(s,y)$ representing the coefficients of the non-zero Fourier modes. 
The \textit{scattering matrix} is defined as the $k \times k$ matrix $\Phi(s) = (\Phi_{\ell j}(s))_{1 \leq \ell, j \leq k}$.
The determinant of $\Phi(s)$, denoted by $\phi(s)$, is called the \textit{scattering determinant}. When $\Re s = 1/2$, we have that $\Phi(s)$ is a unitary matrix. 

Given a height $Y$, we can form a truncated version $E_j^Y(s,z)$ of the Eisenstein series $E_j(s,z)$ defined for any $1 \leq \ell \leq k$ by
$$
E_j^Y(s,z ) = E_j(s,z) - \delta_{j \ell} \, (\Im \sigma_\ell^{-1} z)^s - \Phi_{j \ell}(s)  \, (\Im \sigma_\ell^{-1} z)^{1-s}
$$
if $z \in Z_\ell(Y)$, and by
$$ E_j^Y(s,z) = E_j(s,z)$$ 
if $z$ is in the compact part $X(Y)$.

For any $y > 0$ we denote by $\Pi_y^*$ the projector on functions whose zeroth Fourier mode vanish in each cusp at height higher than $y$ such that $ E_j^Y(s,z) = \Pi_Y^*E_j(s,z)$. 

\subsection{Invariant integral operators and Selberg transform}
We say that a bounded measurable kernel $K : \H \times \H \to \C$ is \textit{invariant} under the diagonal action of $\Gamma$ if for any $\gamma \in \Gamma$ we have 
$$K(\gamma \cdot z, \gamma \cdot w) = K(z,w), \quad (z,w) \in \H \times \H.$$
Assume for simplicity that $K(z,w) = 0$ whenever $d(z,w) > C$ for some constant $C>0$.
Such a kernel defines an integral operator $A$ on the surface $X$ defined for any $f \in C_c^\infty(\Gamma \backslash \H)$ by the formula, 
$$Af(z) = \int_{\H} K(z,w) f(w) \, d\mu(w) = \int_{D} \sum_{\gamma\in\Gamma} K(z,\gamma w) f(w) \, d\mu(w), \quad z \in D.$$
The function $\tilde K : \H \times \H \to \C$ given by
$$\tilde K (z,w) = \sum_{\gamma\in\Gamma} K(z,\gamma w)$$ 
is such that $\tilde K (\gamma z, \gamma' w) =\tilde K(z,w)$ for any $\gamma,\gamma' \in \Gamma$, is the Schwartz kernel of $A$.

A special case of invariant kernels is given by radial kernels. Let $\mathbf{k} : [0,+\infty) \to \C $ be a bounded measurable compactly supported function, then
$$K(z,w) = \mathbf{k}(d(z,w)), \quad (z,w) \in \H \times \H$$ 
is an invariant kernel.

For  $\mathbf{k} : [0,+\infty) \to \C $, the \textit{Selberg transform} $\cS (\mathbf{k})$ of $\mathbf{k}$ is obtained as the Fourier transform
$$ \mathcal S(\mathbf{k})(r) = \int_{-\infty}^{+\infty} e^{-iru} g(u) \, du$$
of
$$g(u) = \sqrt{2} \int_{|u|}^{+\infty} \frac{\mathbf{k}(\rho) \sinh \rho}{\sqrt{\cosh \rho - \cosh u}} \, d\rho. $$
For a function $h : \R \to \C$, the Selberg transform is inverted using the inverse Fourier transform
$$ g(u) = \frac1{2\pi}  \int_{-\infty}^{+\infty} e^{isu} h(s) \, ds $$
 and the formula
$$\mathbf{k}(\rho) = -\frac1{\sqrt{2}\pi} \int_\rho^{+\infty} \frac{g'(u)}{\sqrt{\cosh u - \cosh \rho}} \, du.$$

Eigenfunctions of the Laplacian are eigenfunctions of all operators of convolution by a radial kernel and the eigenvalues are given precisely by the Selberg transform.

\begin{prop}[\cite{Iwa02}, Theorems 1.14 and 1.16]\label{t:Stransform}
Let $X = \Gamma \backslash \H$ be a hyperbolic surface. Let $\mathbf{k} : [0,+\infty) \to \C$ be a smooth function with compact support. If $\psi_\lambda$ is an eigenfunction of the Laplacian on $X$ of eigenvalue $\lambda$, then it is an eigenfunction of the radial integral operator $A$ associated with $\mathbf{k}$. That is,
$$A \psi_\lambda(z) =  \int \mathbf{k}(d(z,w)) \psi_\lambda(w) d\mu(w) = h(r_\lambda) \psi_\lambda(z), $$
where the eigenvalue $h(r_\lambda)$ is given by the Selberg transform of the kernel $\mathbf{k}$:
$$ h(r_\lambda) = \mathcal S(\mathbf{k}) (r_\lambda),  $$
and $r_\lambda \in \R$ is defined by $\lambda = \tau(r_\lambda) = \frac{1}{4} + r_\lambda^2$.
\end{prop}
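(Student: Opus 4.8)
The plan is to reduce the statement to a mean-value property of $\Delta$-eigenfunctions on $\H$, following the classical argument in \cite[Chapter 1]{Iwa02}. The operator $A$ commutes with $\Delta$ because $\Delta$ commutes with isometries and the kernel $\mathbf{k}(d(\cdot,\cdot))$ is radial; but on $\H$ the $\lambda$-eigenspace is infinite-dimensional, so this alone does not force $A\psi_\lambda$ to be a scalar multiple of $\psi_\lambda$. The extra input is the \emph{spherical function}: for $s = \tfrac12 + ir_\lambda$ with $s(1-s) = \lambda$ there is a unique radial solution $\omega_s$ of $\Delta\omega_s = \lambda\omega_s$ on $\H$ normalised by $\omega_s(z_0) = 1$, and one has the mean-value identity that, for every eigenfunction $\psi_\lambda$ and every $r > 0$,
$$\frac{1}{2\pi}\int_0^{2\pi}\psi_\lambda(z(r,\theta))\, d\theta = \omega_s(r)\,\psi_\lambda(z_0),$$
where $z(r,\theta)$ is the point with polar coordinates $(r,\theta)$ centred at $z_0$. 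This holds because the left-hand side, viewed as a function of $r$, is a radial $\lambda$-eigenfunction of $\Delta$ with value $\psi_\lambda(z_0)$ at $r = 0$, and the radial equation $\omega'' + \coth r\,\omega' + \lambda\omega = 0$ has, up to a scalar, a unique solution regular at $r = 0$.

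Granting this, I fix $z_0 \in \H$, lift $\psi_\lambda$ to a $\Gamma$-invariant function on $\H$, and write $A\psi_\lambda(z_0)$ in polar coordinates around $z_0$, using $d\mu = \sinh r\, dr\, d\theta$ and that $\mathbf{k}$ is radial and compactly supported:
$$A\psi_\lambda(z_0) = \int_0^\infty \mathbf{k}(r)\left(\int_0^{2\pi}\psi_\lambda(z(r,\theta))\,d\theta\right)\sinh r\, dr = \left(2\pi\int_0^\infty \mathbf{k}(r)\,\omega_s(r)\sinh r\, dr\right)\psi_\lambda(z_0).$$
Since $z_0$ is arbitrary, this gives $A\psi_\lambda = h(r_\lambda)\psi_\lambda$ on $X$, with $h(r_\lambda) = 2\pi\int_0^\infty\mathbf{k}(r)\,\omega_s(r)\sinh r\, dr$.

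It then remains to identify this scalar with $\mathcal{S}(\mathbf{k})(r_\lambda)$. The plan is to insert an integral representation of $\omega_s$ (of the Harish--Chandra type, $\omega_s(r) = \tfrac1{2\pi}\int_0^{2\pi}(\cosh r - \sinh r\cos\phi)^{-s}\,d\phi$) and change variables to $u$ defined by $\cosh u = \cosh r - \sinh r\cos\phi$; tracking the Jacobian carefully turns $2\pi\int_0^\infty\mathbf{k}(r)\,\omega_s(r)\sinh r\, dr$ into $\int_{-\infty}^{+\infty} e^{-ir_\lambda u}\, g(u)\, du$ with $g(u) = \sqrt{2}\int_{|u|}^{+\infty}\frac{\mathbf{k}(\rho)\sinh\rho}{\sqrt{\cosh\rho - \cosh u}}\,d\rho$ exactly as in the statement, i.e. $\mathcal{S}(\mathbf{k})(r_\lambda)$; the inversion formulas for $\mathbf{k}$ in terms of $g$ and $h$ then follow by inverting the Fourier transform and the Abel-type transform in turn. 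The only genuinely non-formal points are establishing the mean-value identity with the correct behaviour at $r = 0$ and verifying this change of variables; both are carried out in detail in \cite[Theorems 1.14 and 1.16]{Iwa02}, so we only indicate the structure here.
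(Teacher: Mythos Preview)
The paper does not prove this proposition at all: it is stated with a direct citation to \cite[Theorems 1.14 and 1.16]{Iwa02} and used as a black box, with only the remark that it extends to bounded measurable compactly supported $\mathbf{k}$ by approximation. Your proposal is a correct outline of the classical argument that appears in the cited reference (mean-value property via the unique regular radial eigenfunction, then the Abel/Fourier factorisation of the resulting integral), so you are supplying strictly more than the paper itself does.
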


Note that this statement can be generalised to the case of $\mathbf{k}: [0,+\infty) \to \C$ measurable bounded and compactly supported by approximation and dominated convergence.

\section{Mean zero case}
\label{sec:meanzero}

We first consider the case where the test function is of mean $0$. We will consider the general case in Section \ref{s:general}. The proof of the mean zero case follows closely the proof for compact surfaces in \cite{LS17}. The main difference is the Hilbert-Schmidt norm estimate in Lemma \ref{lma:normsurface} that now takes into account the presence of cusps and an additional argument used to deal with the quantum mean absolute deviation instead of the quantum variance. 

\begin{prop}\label{l:zero} Fix $I \subset (1/4,+\infty)$ a compact interval. Then there exists $R_I > 0$ such that for all $R > R_I$ and for all hyperbolic surface $X$ with $\Vol(X) < \infty$ and for any compactly supported measurable function $a \in L^\infty(X)$ such that $ \int a(x) \, d\mu(x) = 0$, we have
\begin{align*}
 \Var_{X,I}(a) \lesssim_I C(X,I) \left( \frac{\Vol(X)}{ \,\rho(\lambda_1) R}  + \frac{e^{2R}}{\min\{1,{\inj_{X(Y_a e^{R/2})}}^2 \}} \Vol((X)_{\leq R} ) \right)^{1/2} \|a\|_\infty.
\end{align*}
where $$C(X,I) = \frac{\max\{N(X,I), k(X)\}^{1/2}}{N(X,I) + M(X,I)}, $$ $k(X)$ is the number of cusps and $\rho(\lambda_1)$ is a function of the spectral gap.
\end{prop}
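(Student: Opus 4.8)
The plan is to mimic the compact-surface argument of \cite{LS17}, using a radial kernel built from a characteristic-type function on the spectral side and controlling the geometric side via injectivity radius. I would work throughout on a fixed finite-area surface $X$ with $k$ cusps.

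\textbf{Step 1: Choice of test kernel.} Fix $R > R_I$. As in \cite{LS17}, I would choose a smooth (or bounded measurable) radial function $\mathbf{k} = \mathbf{k}_{R}$ on $[0,\infty)$, supported in $[0,R]$, whose Selberg transform $h = \mathcal S(\mathbf{k})$ satisfies $h(r) \gtrsim \rho(\lambda_1)R$ for $r \in \tau^{-1}(I)$ and $h \geq 0$ on the real line and on the relevant strip $|\Im r| \leq \tfrac12$ so that all discrete eigenvalues below $1/4$ also contribute nonnegatively. Here $\rho(\lambda_1)$ encodes the distance of $\lambda_1$ to $0$; the point is to get a lower bound on $h$ on $I$ that is linear in $R$ (a standard "fattened spherical function" construction). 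Let $A$ be the associated integral operator on $X$ with automorphised kernel $\tilde K(z,w) = \sum_{\gamma \in \Gamma} \mathbf{k}(d(z,\gamma w))$.

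\textbf{Step 2: Spectral lower bound.} By Proposition \ref{t:Stransform}, $\psi_j$ and each Eisenstein series $E_{\mathfrak b}(\cdot,\tfrac12+ir)$ are eigenfunctions of $A$ with eigenvalue $h(r_j)$, resp.\ $h(r)$. Writing $a\psi_j$ and using that $a$ has mean zero, I would compare $\langle \psi_j, a\psi_j\rangle$ to $\langle A(a\psi_j)?\rangle$-type quantities — more precisely, following \cite{LS17}, bound
$$\sum_{\lambda_j\in I} |\langle\psi_j,a\psi_j\rangle| + \frac{1}{4\pi}\int_{\tau^{-1}(I)}\Big|\sum_{\mathfrak b}\langle E_{\mathfrak b}, aE_{\mathfrak b}\rangle\Big|\,dr \ \lesssim\ \frac{1}{h_{\min}}\Big(\text{Hilbert--Schmidt-type norm of }[A,a]\text{ or }aAa\Big)^{1/2}\cdot(\#\text{spectrum})^{1/2},$$
where $h_{\min}\gtrsim_I \rho(\lambda_1)R$. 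The Cauchy--Schwarz over the spectral parameter introduces the factor $\max\{N(X,I),k\}^{1/2}$ (the $k$ arises because there are $k$ Eisenstein families), and dividing by $N(X,I)+M(X,I)$ gives the prefactor $C(X,I)$. The mean-zero hypothesis is what lets us replace the operator $A$ by $A$ minus its rank-one "constant" projection without cost, exactly as in the compact case; the extra wrinkle for the mean absolute deviation (versus variance) is handled by an additional Cauchy--Schwarz, as flagged in the section preamble.

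\textbf{Step 3: Geometric (Hilbert--Schmidt) estimate.} The remaining task is to bound the Hilbert--Schmidt norm $\|a(\tilde K - \text{diagonal term})a\|_{\mathrm{HS}}^2 = \int\int |a(z)|^2 |a(w)|^2 |\tilde K_{\mathrm{off}}(z,w)|^2\,d\mu\,d\mu$. Since $\spt a \subset X(Y_a)$ and $\mathbf k$ is supported in $[0,R]$, only $z,w$ in the compact part within distance $R$ contribute; the number of group elements $\gamma$ with $d(z,\gamma w)\leq R$ for $z$ in the $R$-thick part is $O(1)$, and the contribution of the thick part is killed by the mean-zero/constant-subtraction. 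What survives is controlled by $\Vol((X)_{\leq R})$, with the $e^{4R}$ coming from $\|\mathbf k\|_\infty^2 \lesssim e^{2R}$ times volume-of-ball factors, and the $\min\{1,\inj_{X(Y_a)}^2\}^{-1}$ from a local covering/packing bound near short geodesics — this is precisely Lemma \ref{lma:normsurface}, which I would invoke. Combining Steps 2 and 3 yields the stated inequality.

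\textbf{Main obstacle.} The genuinely new difficulty, compared to \cite{LS17}, is the Hilbert--Schmidt estimate in the presence of cusps: one must verify that restricting the test functions to $X(Y_a)$ (rather than a genuinely compact surface) does not allow the off-diagonal kernel mass to escape, and that the injectivity-radius bound $\inj_{X(Y_a)}$ — taken over the truncated compact part — correctly replaces the global injectivity radius used in the compact setting. A secondary technical point is the book-keeping for the mean absolute deviation: the extra absolute values prevent a clean orthogonality/trace identity, so one pays an additional square root and must track the $\max\{N,k\}^{1/2}$ factor carefully through both the discrete sum and the Eisenstein integral. Neither obstacle is conceptually deep, but both require the careful local geometry that Lemma \ref{lma:normsurface} packages.
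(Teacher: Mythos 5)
There is a genuine gap, and it sits at the heart of the argument: your proposal misattributes where the main decay factor $\frac{1}{\rho(\lambda_1)R}$ comes from. In Step 1 you ask for a single radial kernel $\mathbf{k}_R$ supported in $[0,R]$ whose Selberg transform satisfies $h(r)\gtrsim \rho(\lambda_1)R$ on $\tau^{-1}(I)$. This cannot work for two reasons. First, the Selberg transform of a fixed kernel is an object of harmonic analysis on $\H$ alone; it cannot depend on the spectral gap of the particular surface $X$. Second, a nonnegative band-limited $h=|\hat g_0|^2$ with $g_0$ supported in $[-R/2,R/2]$ necessarily oscillates and vanishes at points of any fixed interval $\tau^{-1}(I)$ once $R$ is large, so no lower bound growing in $R$ (indeed no uniform positive lower bound) is available for a single kernel. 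The paper's Lemma \ref{lma:spectralaction} only yields the constant lower bound $\frac1T\int_0^T|\cS(\mathbf{k}_t)(r)|^2\,dt\geq C_I$, obtained precisely by averaging the ball operators $P_t$ over $t\in[0,T]$, and the spectral side contributes no gain in $R$ at all. The entire factor $\frac{1}{\rho(\lambda_1)R}\|a\|_2^2$ is won on the \emph{geometric} side: the $L^2$ bound on the time-averaged kernel $K_T(z,w)=\frac1T\int_0^T e^{-t}\int_{B(z,t)\cap B(w,t)}a\,d\mu\,dt$ (Lemma \ref{lma:l2bound}) is proved by a change of variables to the unit tangent bundle and Nevo's quantitative mean ergodic theorem, whose decay exponent $|A_t(r)|^{-\rho(\beta)}$ is where the spectral gap enters and whose applicability is exactly where the mean-zero hypothesis on $a$ is used. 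Your Step 3 instead asserts that the thick-part contribution "is killed by the mean-zero/constant-subtraction", which is not a proof: without the dynamical input the off-diagonal $L^2$ mass of $K_T$ does not decay in $R$, and one would only obtain $\Vol(X)\|a\|_\infty^2$ with no $1/R$.

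Two secondary corrections. The sandwiching must be $P_t\,a\,P_t$ (not $aAa$ or $[A,a]$): it is the identity $\langle\psi_j,P_taP_t\psi_j\rangle=|\cS(\mathbf{k}_t)(r_j)|^2\langle\psi_j,a\psi_j\rangle$, valid also for the Eisenstein series, that lets one pass from $\Var_{X,I}(a)$ to $\Var_{X,I}\bigl(\frac1T\int_0^TP_taP_t\,dt\bigr)$ and then, via Cauchy--Schwarz (giving $\max\{N(X,I),k\}^{1/2}$) and the Bessel/spectral-theorem inequality, to the Hilbert--Schmidt norm. Also, the $L^\infty$ bound on $K_T$ is $\lesssim\|a\|_\infty$ with no exponential factor (the normalisation $e^{-t/2}$ in $P_t$ absorbs the ball volume); the $e^{4R}$ in the final estimate comes from the lattice-point count $N_\Gamma(2T;z,w)\lesssim e^{2T}/\min\{1,\inj_{X(Y_a)}^2\}$ times $\Vol(B(2T))\lesssim e^{2T}$ in Lemma \ref{lma:normsurface}, applied with $R=2T$. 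Your identification of the cusp-related use of $\inj_{X(Y_a)}$ and of the extra Cauchy--Schwarz needed for the mean absolute deviation is correct, but as written the proposal lacks the ergodic-theorem mechanism that actually produces the claimed bound.
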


The key idea proposed in \cite{LS17} is to introduce a ball averaging operator that we see as a form of wave propagation.
For any bounded measurable function $u : X \to \C$ we define
\begin{equation}\label{e:wave}
P_t \, u(z) = \frac{1}{e^{t/2}}\int_{B(z,t)} u(w) \, d\mu(w),
\end{equation}
where $B(z,t)$ is the hyperbolic ball of radius $t$ around $z$.
The operator $P_t$ is at the centre of our dynamical approach. Our goal is to show that the mean deviation for a mean-zero test function $a$
\begin{multline*}
\Var_{X,I}(a) = \frac1{N(X,I) + M(X,I)} \left( \sum_{\lambda_j \in I} \left| \langle \psi_j, a \, \psi_j \rangle \right| \right. \\ \left. + \frac1{4\pi} \int_{\tau^{-1}(I)} \left|  \sum_{\mathfrak{b}\in \mathfrak{C}(X)} \langle E_{\mathfrak{b}}(\cdot,\tfrac12 + ir), a \, E_{\mathfrak{b}}(\cdot,\tfrac12 + ir) \rangle\right| \, dr \right),$$
\end{multline*}
satisfies some invariance under the action of $P_t$ in the sense that we can formally replace $a$ in the previous expression with the time-evolved operator $\frac1T \int_0^T P_t \, a \, P_t \,dt $ and consider
$$\Var_{X,I} \left( \frac1T \int_0^T P_t \, a \, P_t \,dt \right),$$
which in turn is controlled by the dynamics of the geodesic flow when $T\to +\infty$.  In fact we just need 
$$ \Var_{X,I}(a) \lesssim \Var_{X,I} \left( \frac1T \int_0^T P_t \, a \, P_t \,dt \right) $$
where the implied constant is uniform in $T$ and $X$. For us we relate $T$ and $R$ by $T = 2R$.

\subsection{Invariance of the quantum mean absolute deviation (spectral side)} \label{sec:spectralside}

The first step is to understand the action of $P_t$ on eigenfunctions $\psi_\lambda$ of the Laplacian of eigenvalue $\lambda$.
The operator $P_t$ has the form of a radial integral operator: for $u \in L^\infty(X)$, we have
$$P_t u(z) = \int K_t(z,w) \, u(w) \, d\mu(w)$$
with radial kernel $K_t(z,w) = \mathbf{k}_t(d(z,w))$, where
$$\mathbf{k}_t(\rho) := e^{-t/2} \mathbf{1}_{\{\rho \leq t\}}.$$
By Proposition \ref{t:Stransform} we have for any function $\psi_\lambda$ such that $\Delta \psi_\lambda = \lambda \psi_\lambda$
$$P_t \psi_\lambda(z) =  \int \mathbf{k}_t(d(z,w)) \psi_\lambda(w) d\mu(w) = \cS(\mathbf{k}_t)(s_\lambda) \psi_\lambda(z), $$
where $\cS(\mathbf{k}_t)$ is the Selberg transform of the kernel $\mathbf{k}_t$ and $s_\lambda \in \C$ is defined by the equation $\lambda = \frac14 + s_\lambda^2$. 

The action of $\frac1T \int_0^T P_t \, a \, P_t \,dt $ on $\psi_\lambda$ will be understood through the following lemma, proved in \cite[Proposition 4.2]{LS17}.

\begin{lemma}\label{lma:spectralaction} Let $I \subset (\frac{1}{4},+\infty)$ be a compact interval. Then there exists a constants $C_I > 0$ and $T_I > 0$ such that for all $T > T_I$ we have
$$\inf_{r \in \tau^{-1}(I)} \frac{1}{T}\int_0^T |\cS(\mathbf{k}_t)(r)|^2 \, dt \geq C_I$$
where $\tau(r) = \frac{1}{4} + r^2$.
\end{lemma}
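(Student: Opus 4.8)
\emph{Proof proposal.} The plan is to compute the Selberg transform $\mathcal S(\mathbf k_t)$ explicitly as a function of $t$ and to read off its oscillatory behaviour as $t\to\infty$. For $\mathbf k_t(\rho)=e^{-t/2}\mathbf 1_{\{\rho\le t\}}$ the substitution $v=\cosh\rho$ evaluates the inner integral in the definition of $g$, giving $g_t(u)=2\sqrt2\,e^{-t/2}\sqrt{\cosh t-\cosh u}\,\mathbf 1_{\{|u|\le t\}}$, hence
$$\mathcal S(\mathbf k_t)(r)=2\sqrt2\,e^{-t/2}\int_{-t}^{t}e^{-iru}\sqrt{\cosh t-\cosh u}\,du .$$
Using $\cosh t-\cosh u=2\sinh\tfrac{t+u}2\sinh\tfrac{t-u}2$, $\sinh x=\tfrac12 e^{x}(1-e^{-2x})$, and recentring via $u=2s$, I would rewrite this, for real $r$, as
$$\mathcal S(\mathbf k_t)(r)=4\int_{-t/2}^{t/2}e^{-2irs}\,p\!\left(\tfrac t2+s\right)p\!\left(\tfrac t2-s\right)ds=8\int_{0}^{t/2}\cos(2rs)\,p\!\left(\tfrac t2+s\right)p\!\left(\tfrac t2-s\right)ds,$$
where $p(x):=\sqrt{1-e^{-2x}}$.

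Next I would extract the asymptotics. On $[0,t/2]$ one has $p(\tfrac t2+s)=1+O(e^{-t})$ uniformly, and $1-p(w)=O(e^{-2w})$ is integrable; writing $p(\tfrac t2+s)p(\tfrac t2-s)=1-\bigl(1-p(\tfrac t2-s)\bigr)+O(e^{-t})$ and substituting $w=\tfrac t2-s$ splits $\mathcal S(\mathbf k_t)(r)$ into $8\int_0^{t/2}\cos(2rs)\,ds=\tfrac4r\sin(rt)$ plus $-8\int_0^{\infty}\cos(rt-2rw)\bigl(1-p(w)\bigr)\,dw$, up to an error $O(e^{-ct})$ uniform for $r$ in any compact $K\subset\R\setminus\{0\}$. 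Expanding the cosine gives
$$\mathcal S(\mathbf k_t)(r)=\alpha(r)\sin(rt)+\beta(r)\cos(rt)+o(1)\qquad(t\to\infty),$$
uniformly on $K$, with $\alpha(r)=\tfrac4r-8\int_0^{\infty}\sin(2rw)(1-p(w))\,dw$ and $\beta(r)=-8\int_0^{\infty}\cos(2rw)(1-p(w))\,dw$. The substitution $x=e^{-2w}$ together with the Beta integral $\int_0^1 x^{s-1}(1-x)^{1/2}dx=\tfrac{\sqrt\pi}2\Gamma(s)/\Gamma(s+\tfrac32)$ (analytically continued to $\Re s=0$) yields $\int_0^{\infty}e^{2irw}(1-p(w))\,dw=\tfrac{i}{2r}-\tfrac{\sqrt\pi}4 Z(r)$ with $Z(r):=\Gamma(-ir)/\Gamma(\tfrac32-ir)$, whence $\alpha(r)=2\sqrt\pi\,\Im Z(r)$, $\beta(r)=2\sqrt\pi\,\Re Z(r)$, and by the reflection and recurrence relations for $\Gamma$,
$$\alpha(r)^2+\beta(r)^2=4\pi|Z(r)|^2=\frac{4\pi\coth(\pi|r|)}{|r|(\tfrac14+r^2)}>0\qquad(r\ne0).$$

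To conclude, I would note that integration by parts (using that $s\mapsto p(\tfrac t2+s)p(\tfrac t2-s)$ is monotone on $[0,t/2]$ with values in $[0,1]$ and vanishes at $t/2$) gives $|\mathcal S(\mathbf k_t)(r)|\le 4/|r|$ uniformly in $t\ge0$, so on $K=\tau^{-1}(I)$ the asymptotics may be inserted into the time average. Since $r$ is bounded away from $0$, the cross terms in $|\alpha\sin rt+\beta\cos rt|^2$ average to $0$ and the $o(1)$ term contributes negligibly, so
$$\frac1T\int_0^{T}|\mathcal S(\mathbf k_t)(r)|^2\,dt\ \longrightarrow\ \tfrac12\bigl(\alpha(r)^2+\beta(r)^2\bigr)=\frac{2\pi\coth(\pi|r|)}{|r|(\tfrac14+r^2)}\qquad(T\to\infty),$$
uniformly for $r\in\tau^{-1}(I)$. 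The right-hand side is continuous and strictly positive on the compact set $\tau^{-1}(I)$, hence $\ge 2C_I$ for some $C_I>0$; picking $T_I$ so that the convergence is within $C_I$ of the limit for all $r\in\tau^{-1}(I)$ gives $\inf_{r\in\tau^{-1}(I)}\tfrac1T\int_0^T|\mathcal S(\mathbf k_t)(r)|^2\,dt\ge C_I$ for $T>T_I$.

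The main difficulty will be twofold: (i) making the expansion $\mathcal S(\mathbf k_t)(r)=\alpha(r)\sin(rt)+\beta(r)\cos(rt)+o(1)$ genuinely uniform in $r\in\tau^{-1}(I)$, which rests on the exponential decay of $1-p(w)$ and on the uniform smallness of the tail of the oscillatory integral; and (ii) showing that the limiting amplitude $\alpha(r)^2+\beta(r)^2$ never vanishes on $\tau^{-1}(I)$, since a priori the two oscillatory contributions $\tfrac4r\sin(rt)$ and $-8\int_0^\infty\cos(rt-2rw)(1-p(w))\,dw$ could cancel in amplitude at some $r$ — it is precisely the explicit $\Gamma$-function evaluation that rules this out. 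Everything else is routine.
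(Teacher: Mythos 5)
Your computation is correct: the explicit form $g_t(u)=2\sqrt2\,e^{-t/2}\sqrt{\cosh t-\cosh u}\,\mathbf 1_{\{|u|\le t\}}$, the reduction to $4\int_{-t/2}^{t/2}e^{-2irs}p(\tfrac t2+s)p(\tfrac t2-s)\,ds$, the Beta/Gamma evaluation of the limiting amplitude $\alpha(r)^2+\beta(r)^2=4\pi\coth(\pi|r|)/(|r|(\tfrac14+r^2))$, and the uniform averaging argument all check out. The paper does not prove this lemma itself but cites \cite[Proposition 4.2]{LS17}, whose argument is of essentially the same nature (explicit Selberg transform of the ball kernel, extraction of the oscillatory main term, non-vanishing of the amplitude), so your proposal amounts to a correct self-contained reconstruction of the cited proof.
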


Given any $\lambda \in I$, an application of Lemma \ref{lma:spectralaction} to $\phi_\lambda$ gives us a bound for any $T > T_I$:
\begin{align}
 | \langle \psi_\lambda, a \, \psi_\lambda \rangle| \leq \frac{1}{C_I}  \Big| \Big\langle \psi_\lambda, \Big(\frac{1}{T}\int_0^T P_t a P_t \, dt\Big) \psi_\lambda \Big\rangle\Big|. \label{e:2}
\end{align}
This applies to both cusp forms and Eisenstein series. Thus summing and integrating the above estimate \eqref{e:2} over both the discrete and continuous spectrum, we get a formal bound for the quantum mean absolute deviation \eqref{e:qvariance}. It is not obvious that this bound is in fact finite. We will show it by establishing in Proposition \ref{prop:hsbound} that $\frac{1}{T}\int_0^T P_t a P_t \, dt$ is a Hilbert-Schmidt operator. We first have the following bound of the quantum deviation. We note that this also gives a bound for the trace of the spectral projection of the Laplacian to the interval $I$.

\begin{prop}\label{prop:qvhs}
Under the assumptions of Theorem \ref{thm:main}, there exists $T_I > 0$ such that for all $T > T_I$ and any $a \in L^\infty(X)$ with $\int_X a \, d\mu = 0$, we have
\begin{align*}
\Var_{X,I}(a) \lesssim_I C(X,I) \Big\|\frac{1}{T}\int_0^T P_t a P_t \, dt\Big\|_{\HS},
\end{align*}
where
$$C(X,I) = \frac{\max\{N(X,I), k(X)\}^{1/2}}{N(X,I) + M(X,I)} $$
and $k(X)$ is the number of cusps of $X$.
\end{prop}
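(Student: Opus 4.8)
The plan is to combine the pointwise spectral lower bound of Lemma~\ref{lma:spectralaction} (applied in the form of the inequalities \eqref{e:1} and \eqref{e:2}) with a Cauchy--Schwarz step and a Hilbert--Schmidt bound on the time-averaged operator $B_T := \frac1T\int_0^T P_t\, a\, P_t\, dt$. Concretely, write $N = N(X,I)$ and observe that \eqref{e:1} and \eqref{e:2} give
\begin{align*}
\widetilde\Var_{X,I}(a) \lesssim_I \sum_{\lambda_j \in I} \bigl|\langle \psi_j, B_T \psi_j\rangle\bigr| + \frac1{4\pi}\int_{\tau^{-1}(I)} \Bigl| \sum_{\mathfrak b} \langle E_{\mathfrak b}(\cdot,\tfrac12+ir), B_T E_{\mathfrak b}(\cdot,\tfrac12+ir)\rangle\Bigr|\, dr,
\end{align*}
where $\widetilde\Var_{X,I}(a) = (N(X,I)+M(X,I))\Var_{X,I}(a)$. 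The first step is therefore to prove that $B_T$ is Hilbert--Schmidt (this is the content of the forthcoming Proposition~\ref{prop:hsbound}), so that all these expressions are finite and the spectral-theoretic manipulations below are legitimate.

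Next I would bound each of the two terms. For the discrete part, apply Cauchy--Schwarz over the (at most $N$ many) indices $j$ with $\lambda_j \in I$:
\begin{align*}
\sum_{\lambda_j \in I} \bigl|\langle \psi_j, B_T \psi_j\rangle\bigr| \leq N^{1/2} \Bigl( \sum_{\lambda_j \in I} |\langle \psi_j, B_T \psi_j\rangle|^2 \Bigr)^{1/2} \leq N^{1/2} \Bigl( \sum_j |\langle \psi_j, B_T \psi_j\rangle|^2 \Bigr)^{1/2} \leq N^{1/2} \|B_T\|_{\HS},
\end{align*}
using that the $\psi_j$ form an orthonormal system and that $\sum_j |\langle \psi_j, B \psi_j\rangle|^2 \le \sum_{i,j}|\langle \psi_i, B\psi_j\rangle|^2 \le \|B\|_{\HS}^2$ (or, more cleanly, that $\sum_j |\langle \psi_j, B\psi_j\rangle|^2 \le \|B e_j\|^2$-type bounds give $\le \|B\|_{\HS}^2$). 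For the continuous part, the analogue of the ``orthonormal system'' is the spectral decomposition attached to the Eisenstein series: the collection $\{E_{\mathfrak b}(\cdot,\tfrac12+ir)\}$, integrated against $\frac1{4\pi}\,dr$ over all $r\in\R$, behaves like a continuous orthonormal family (the Eisenstein transform is an isometry onto its image, cf.\ the spectral expansion in \cite{Iwa02}). Hence one gets a Bessel-type inequality: the integral $\frac1{4\pi}\int_{\R} \sum_{\mathfrak b} |\langle E_{\mathfrak b}(\cdot,\tfrac12+ir), B_T E_{\mathfrak b}(\cdot,\tfrac12+ir)\rangle|^2\, dr \le \|B_T\|_{\HS}^2$, and then Cauchy--Schwarz on the interval $\tau^{-1}(I)$, whose measure is comparable to $\max\{N,k\}^{1/2}$ up to $O_I(1)$ after using the Weyl-type count, produces a factor $\max\{N(X,I),k(X)\}^{1/2}$ in front of $\|B_T\|_{\HS}$. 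Assembling the two estimates and dividing by $N(X,I)+M(X,I)$ yields exactly
\begin{align*}
\Var_{X,I}(a) \lesssim_I \frac{\max\{N(X,I),k(X)\}^{1/2}}{N(X,I)+M(X,I)} \|B_T\|_{\HS} = C(X,I)\,\Bigl\| \tfrac1T\int_0^T P_t a P_t\, dt \Bigr\|_{\HS}.
\end{align*}

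The main obstacle is the continuous-spectrum term: unlike the discrete case, one cannot simply invoke an orthonormal basis, and one must be careful that the relevant ``Parseval/Bessel'' inequality for the Eisenstein series genuinely bounds $\frac1{4\pi}\int |\langle E_{\mathfrak b}, B_T E_{\mathfrak b}\rangle|^2\, dr$ by $\|B_T\|_{\HS}^2$ --- this requires that $B_T$ maps $L^2$-type functions into the Eisenstein sector appropriately and that there are no cross-terms with the discrete spectrum contributing negatively. This is also precisely where the factor $k = k(X)$ enters (there are $k$ Eisenstein series per value of $r$, and the interval $\tau^{-1}(I)$ must be measured against the spectral density, which near the bottom of the continuous spectrum is governed by $k$ rather than by Weyl's law). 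A secondary technical point is justifying the interchange of the $t$-integral defining $B_T$ with the spectral sums/integrals and with taking the Hilbert--Schmidt norm, which is clean once $P_t$ is known to be bounded on $L^2$ with norm $O(1)$ uniformly in $t$ on $[0,T]$ (it is a radial operator with an explicit Selberg transform), so that $B_T$ is a norm-convergent Bochner integral of bounded operators; combined with the forthcoming Hilbert--Schmidt bound this makes everything rigorous.
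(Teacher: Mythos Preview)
Your approach is essentially the same as the paper's: apply the pointwise spectral lower bound from Lemma~\ref{lma:spectralaction}, then Cauchy--Schwarz, then bound the resulting $\ell^2$/$L^2$ quantities by the Hilbert--Schmidt norm via the spectral expansion of the kernel. The paper simply reverses the first two steps (Cauchy--Schwarz first, then the spectral bound), which is immaterial since the bound $|\langle \psi, a\psi\rangle| \le C_I^{-1}|\langle \psi, B_T\psi\rangle|$ holds pointwise for each eigenfunction and each $r$.

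There is, however, a genuinely confused step in your treatment of the continuous part. You write that the Lebesgue measure of $\tau^{-1}(I)$ is ``comparable to $\max\{N,k\}^{1/2}$ up to $O_I(1)$ after using the Weyl-type count'', and later that ``the interval $\tau^{-1}(I)$ must be measured against the spectral density''. This is not right: $|\tau^{-1}(I)|$ is just the Lebesgue measure of a fixed compact set in $\R$, a constant depending only on $I$, with no relation to $N$, $k$, or any Weyl law. The factor $k^{1/2}$ does not come from any spectral density; it comes from Cauchy--Schwarz over the $k$ cusps. Concretely, the paper bounds
\[
\int_{\tau^{-1}(I)} \Bigl|\sum_{\mathfrak b} \langle E_{\mathfrak b}, B_T E_{\mathfrak b}\rangle\Bigr|\, dr
\;\le\; \bigl(|\tau^{-1}(I)|\cdot k\bigr)^{1/2}\Bigl(\int_{\tau^{-1}(I)} \sum_{\mathfrak b} \bigl|\langle E_{\mathfrak b}, B_T E_{\mathfrak b}\rangle\bigr|^2\, dr\Bigr)^{1/2},
\]
treating the sum over $\mathfrak b$ and the integral over $r$ together as a measure space of total mass $|\tau^{-1}(I)|\,k \lesssim_I k$. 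Combining this $k^{1/2}$ with the $N^{1/2}$ from the discrete part via $\sqrt{N}+\sqrt{k}\lesssim \sqrt{\max\{N,k\}}$ gives $C(X,I)$. Once you correct this bookkeeping, your argument matches the paper's.
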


Here the quantity
$$\Big\|\frac{1}{T}\int_0^T P_t a P_t \, dt\Big\|_{\HS}$$
is the Hilbert-Schmidt norm of the operator $\frac{1}{T}\int_0^T P_t a P_t \, dt$, that we bound in the next section.

\begin{proof}[Proof of Proposition \ref{prop:qvhs}] Let $\bar a = \frac{1}{\Vol(X)} \int_X a(x) \, d\mu(x)$ and assume $\bar a = 0$. Write
$$A := \frac{1}{T}\int_0^T P_t a P_t \, dt.$$
Our aim is to relate $\Var_{X,I}(a)$ to $\|A\|_{\mathrm{HS}}^2$ using the spectral theorem. By the Cauchy-Schwarz inequality, the concavity of the square root (which gives $\sqrt{a} + \sqrt{b} \leq \frac{2}{\sqrt{2}}\sqrt{a+b}$), and the bound \eqref{e:2}, we have the estimate:
\begin{align*}
& \quad \sum_{\lambda_j \in I} | \langle \psi_j, a \, \psi_j \rangle| + \int_{\tau^{-1}(I)} \left|  \sum_{\mathfrak{b}\in \mathfrak{C}(X)} \langle E_{\mathfrak{b}}(\cdot,\tfrac12 + ir), a \, E_{\mathfrak{b}}(\cdot,\tfrac12 + ir) \rangle \right| \, dr  \\
&  \leq N(X,I)^{1/2}\Big(\sum_{\lambda_j \in I} | \langle \psi_j, a \, \psi_j \rangle|^2\Big)^{1/2} \\
& \qquad + (|\tau^{-1}(I)| |\mathfrak{C}(X)|)^{1/2}\Big(\int_{\tau^{-1}(I)}   \sum_{\mathfrak{b}\in \mathfrak{C}(X)} \left| \langle E_{\mathfrak{b}}(\cdot,\tfrac12 + ir), a \, E_{\mathfrak{b}}(\cdot,\tfrac12 + ir) \rangle \right|^2 \, dr\Big)^{1/2}  \\
&  \lesssim \max\{N(X,I),|\tau^{-1}(I)| |\mathfrak{C}(X)|\}^{1/2}\Big(\sum_{\lambda_j \in I} | \langle \psi_j, a \, \psi_j \rangle|^2\\
& \qquad + \int_{\tau^{-1}(I)}  \sum_{\mathfrak{b}\in \mathfrak{C}(X)} \left| \langle E_{\mathfrak{b}}(\cdot,\tfrac12 + ir), a \, E_{\mathfrak{b}}(\cdot,\tfrac12 + ir) \rangle \right|^2 \, dr\Big)^{1/2}  \\
& \lesssim_I (\max\{N(X,I),|\tau^{-1}(I)| |\mathfrak{C}(X)|\})^{1/2}\Big(\sum_{\lambda_j \in I} \Big| \Big\langle \psi_j, A \psi_j \Big\rangle\Big|^2 \\
&\qquad+  \frac{1}{4\pi}\int_{\tau^{-1}(I)}   \sum_{\mathfrak{b}\in \mathfrak{C}(X)} \left| \Big\langle E_{\mathfrak{b}}(\cdot,\tfrac12 + ir), A \, E_{\mathfrak{b}}(\cdot,\tfrac12 + ir) \Big\rangle\right|^2 \, dr\Big)^{1/2}\\
& \lesssim_I   \max\{N(X,I),|\tau^{-1}(I)| |\mathfrak{C}(X)|\}^{1/2} \|A\|_{\HS}.
\end{align*}
For the last inequality we use the spectral theorem (See \cite[Theorem 7.3 and Theorem 7.4]{Iwa02}) for the kernel of the operator $A$, which gives
\begin{align*}\sum_{\lambda_j \in I} | \langle \psi_j,A \psi_j \rangle |^2 +  \frac{1}{4\pi}\int_{\tau^{-1}(I)}   \sum_{\mathfrak{b}\in \mathfrak{C}(X)} | \langle E_{\mathfrak{b}}(\cdot,\tfrac12 + ir), A E_{\mathfrak{b}}(\cdot,\tfrac12 + ir) \rangle |^2 \, dr \leq \| A \|_{\HS}^2.
\end{align*}
\end{proof}

\subsection{Bounding the Hilbert-Schmidt norm} \label{sec:geometricside}

Write $R_I = 2T_I$, where $T_I > 0$ is from Proposition \ref{prop:qvhs}. Let $R > R_I$ and write $T = R/2$ so $T > T_I$. Then by Proposition \ref{prop:qvhs}, to prove Theorem \ref{thm:main} we will need to bound 
$$\Big\|\frac{1}{T}\int_0^T P_t a P_t \, dt\Big\|_{\HS}.$$
Since the test function $a \in L^\infty(X)$ has compact support in $X$, we can choose $Y = Y_a > 0$ large enough such that
$$ \spt a \subset X(Y) $$
where
$$ X(Y) = X \setminus \bigcup_{\mathfrak{b}} X_{\mathfrak{b}} (Y),$$
and $X_{\mathfrak{b}} (Y)$ is the cuspidal zone associated with $\mathfrak{b}$. This means that the support of $a$ does not go beyond height $Y$ into the cusps.

We can prove that $\frac{1}{T}\int_0^T P_t a P_t \, dt$ is Hilbert-Schmidt and has the following quantitative bound that we apply with $T = 2R$:

\begin{prop}[Geometric bound]\label{prop:hsbound} For every $a \in L^\infty(X)$ compactly supported and every $T > 0$ the operator $\frac{1}{T}\int_0^T P_t a P_t \, dt$ is Hilbert-Schmidt with norm
\begin{align*}\Big\|\frac{1}{T}\int_0^T P_t a P_t \, dt\Big\|_{\HS}^2   \lesssim \frac{\|a\|_2^2}{T\rho(\lambda_1)} +  \frac{e^{4T}}{\min\{1,{\inj_{X(Y_a e^T)}}^2 \}}  \Vol((X)_{\leq 2T}) \|a\|_\infty^2,
\end{align*}
for $Y_a >0$ large enough such that $\spt a  \subset X(Y_a)$.
\end{prop}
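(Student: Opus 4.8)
The plan is to realise $A:=\frac1T\int_0^T P_taP_t\,dt$ as an integral operator on $X$, estimate its Schwartz kernel, and then bound $\|A\|_{\HS}^2=\int_F\int_F|k_A(z,w)|^2\,d\mu(z)\,d\mu(w)$ by separating a ``diagonal'' contribution, coming from the single sheet of the covering $\H\to X$ and controlled by the spectral theory of $\Delta$, from an ``off--diagonal'' contribution, supported in the thin part and controlled by lattice point counting. Since $P_t$ has the radial kernel $\mathbf k_t(\rho)=e^{-t/2}\1_{\{\rho\le t\}}$, writing $\tilde a$ for the $\Gamma$--periodic lift of $a$ one gets $k_A(z,w)=\sum_{\gamma\in\Gamma}\kappa_A(z,\gamma w)$ with
\[
\kappa_A(z,w)=\frac1T\int_0^T e^{-t}\Big(\int_{B(z,t)\cap B(w,t)}\tilde a(u)\,d\mu(u)\Big)\,dt ,
\]
which is symmetric and vanishes unless $d(z,w)\le 2T$ and both $z,w$ lie within distance $T$ of $\spt\tilde a$. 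Because $\spt a\subset X(Y_a)$, this confines the relevant points modulo $\Gamma$ to $X(Y_a e^{T})$, on which $\inj_X\ge c\,e^{-T}\min\{1,\inj_{X(Y_a)}\}$; this is the source of the factor $e^{4T}/\min\{1,\inj_{X(Y_a)}^2\}$. I would also record the elementary bounds $|\kappa_A(z,w)|\lesssim\|a\|_\infty e^{-d(z,w)/2}$ on its support, $\int_\H|\kappa_A(z,w)|^2\,d\mu(w)\lesssim T\|a\|_\infty^2$, and the lattice count $\#\{\gamma\in\Gamma:\ d(z,\gamma w)\le R\}\lesssim e^{R}/\min\{1,\inj_X(z)^2\}$.

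For the off--diagonal term, write $k_A=\kappa_A+\sum_{\gamma\ne e}\kappa_A(\cdot,\gamma\,\cdot)$ and estimate $\int_F\int_F\big|\sum_{\gamma\ne e}\kappa_A(z,\gamma w)\big|^2$ by Cauchy--Schwarz in $\gamma$ against the lattice count, then unfold $\int_F\sum_\gamma|\kappa_A(z,\gamma w)|^2\,d\mu(w)=\int_\H|\kappa_A(z,w)|^2\,d\mu(w)$. A nontrivial $\gamma$ forces $z$ (and $w$) into $(X)_{<2T}$ in addition to $\cN_T(\spt a)$, so the outer integral is over $(X)_{<2T}$, the inner $L^2$--norm is $\lesssim T\|a\|_\infty^2$, and the multiplicity is $\lesssim e^{4T}/\min\{1,\inj_{X(Y_a)}^2\}$ after absorbing the $e^{-T}$ loss in the injectivity radius; altogether this is $\lesssim\frac{e^{4T}}{\min\{1,\inj_{X(Y_a)}^2\}}\Vol((X)_{<2T})\|a\|_\infty^2$, the harmless extra factor $T$ being absorbed into $e^{4T}$ (or removed by a slightly more careful volume bound).

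The diagonal term $\int_F\int_F|\kappa_A(z,w)|^2$ is the crux, and a pointwise kernel bound is far too lossy here (it gives only $\sim T\Vol(X)\|a\|_\infty^2$); instead one must keep it attached to the spectral decomposition. Passing to the trace and using that the $P_t$ commute leads to $\frac1{T^2}\int_0^T\int_0^T\Tr\big(\bar a\,(P_sP_t)\,a\,(P_sP_t)\big)\,ds\,dt$, and after diagonalising the radial operators $P_sP_t$ by the Selberg/Plancherel transform on $L^2(X)$ (discrete eigenfunctions together with Eisenstein series) the $(s,t)$--average factors as $\big(\frac1T\int_0^T\cS(\mathbf k_s)(r)\cS(\mathbf k_s)(r')\,ds\big)^2$, paired against the spectral data of $a$. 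The key point is the genuine oscillation of $\cS(\mathbf k_s)$ in $s$ (up to an exponentially small error it is a trigonometric polynomial in $s$ with frequencies $\pm r$), so this average is $\lesssim (T|r-r'|)^{-1}$ off the diagonal $r=r'$; integrating out the ``width $1/T$'' diagonal band and invoking Plancherel converts the spectral pairing into $\tfrac1T\|a\|_2^2$ — the $L^2$ norm rather than $\|a\|_\infty$ appearing because in this step $a$ is paired against the positive kernels of the $P_t$, so a Schur/Cauchy--Schwarz argument in the intermediate variable applies. The spectral gap enters because this argument only covers the tempered part of the spectrum: the constant eigenfunction must be removed (its contribution is of the type already absorbed into the off--diagonal estimate, as one checks directly from $P_t\mathbf 1=e^{-t/2}|B(\cdot,t)|$), and the finitely many exceptional eigenvalues in $(0,\tfrac14)$, for which $\cS(\mathbf k_s)$ grows rather than oscillates, must be controlled using the lower bound $\lambda_1$; packaging these losses gives the function $\rho(\lambda_1)$ and the bound $\frac{\|a\|_2^2}{T\rho(\lambda_1)}$. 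I expect this diagonal term — simultaneously extracting the $1/T$--gain from the oscillation of the spherical transform and the $1/\rho(\lambda_1)$--gain while paying only $\|a\|_2$, and disposing of the exceptional spectrum — to be the main obstacle. Combining it with the off--diagonal estimate, and if desired replacing $\|a\|_2^2$ by $\Vol(X)\|a\|_\infty^2$, yields the stated inequality.
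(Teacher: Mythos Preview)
Your off--diagonal estimate is fine and matches the paper's Lemma~\ref{lma:normsurface}: split off the $\gamma\neq e$ terms, use Cauchy--Schwarz against the lattice count, and unfold. The paper does this cleanly by noting that $K_T(z,w)=0$ unless both $z,w\in\H(Y_a)$, so the relevant lattice count is governed by $\inj_{X(Y_a)}$ directly, without your detour through $X(Y_ae^T)$.

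The diagonal term is where your proposal breaks down. You write the diagonal contribution as $\int_F\int_F|\kappa_A(z,w)|^2$ and then ``pass to the trace'' $\frac1{T^2}\iint\Tr(\bar a\,P_sP_t\,a\,P_sP_t)\,ds\,dt$. But that trace is $\|A\|_{\HS}^2$ on $L^2(X)$ --- the \emph{full} Hilbert--Schmidt norm including all $\gamma$ --- not the single--sheet term you isolated. The integral $\int_F\int_\H|\kappa_A|^2$ is not a trace on $X$, and the spectral decomposition into cusp forms and Eisenstein series you invoke lives on $L^2(X)$, not on $L^2(\H)$ restricted to $F$. So the object you are trying to bound and the tool you propose to bound it with do not match.

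Even if one tried to bound $\|A\|_{\HS}^2$ directly by your spectral route, the sketch does not close. Writing $\|A\|_{\HS}^2=\sum_{j,k}|c_{jk}|^2|\langle a\psi_j,\psi_k\rangle|^2+\cdots$ with $c_{jk}=\frac1T\int_0^T h_t(r_j)h_t(r_k)\,dt$, the near--diagonal band $|r_j-r_k|\lesssim 1/T$ still carries the quantity $\sum_{j,k\,\text{near}}|\langle a\psi_j,\psi_k\rangle|^2$, which is a piece of the (infinite) Hilbert--Schmidt norm of multiplication by $a$; there is no Plancherel identity that turns this into $\|a\|_2^2$. Your phrase ``integrating out the width $1/T$ diagonal band and invoking Plancherel'' hides exactly the step that does not exist.

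The paper takes a genuinely different route for the diagonal term (Lemma~\ref{lma:l2bound}): it never touches the spectral decomposition of $a$. Instead it changes variables $(z,w)\mapsto(\text{midpoint},\text{direction},r)$, recognises the inner integral over $B(z,t)\cap B(w,t)$ as a group average $\pi(A_t(r))a$ on $\Gamma\backslash\PSL(2,\R)$, and applies Nevo's quantitative mean ergodic theorem (Theorem~\ref{thm:nevo}) to get $\|\pi(A_t(r))a\|_2\lesssim|A_t(r)|^{-\rho(\lambda_1)}\|a\|_2$. The $\|a\|_2$ appears because Nevo's theorem is an $L^2\to L^2$ bound for averaging operators, and the $\rho(\lambda_1)$ is exactly the exponent in that theorem, which encodes the spectral gap. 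After Minkowski in $t$ and the volume bound $|A_t(r)|\lesssim e^{t-r/2}$, a short computation gives $\int_F\int_\H|K_T|^2\lesssim\|a\|_2^2/(T\rho(\lambda_1)^2)$. This is the missing mechanism in your argument.
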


We will work with a fundamental domain $F$ of $X$ that we decompose such that:
$$ F(Y) = F \setminus \bigcup_{\mathfrak{b}} F_{\mathfrak{b}} (Y),$$
and $F_{\mathfrak{b}} (Y)$ represent the cuspidal zone associated with the cusp $\mathfrak{b}$. Having fixed $T > 0$ and $a \in L^\infty(X)$, we write for $(z,w) \in F \times F$
$$\mathbf{K}_T(z,w) = \Big[\frac{1}{T}\int_0^T P_t a P_t \, dt \Big](z,w) = \frac{1}{T}\int_0^T [P_t a P_t](z,w) \, dt$$
where we use the bracket notation $[A]$ for the kernel of an integral operator $A$. Then we have
$$\left\|\frac{1}{T}\int_0^T P_t a P_t \, dt\right\|_{\HS}^2 = \int_F \int_F |\mathbf{K}_T(z,w)|^2 d\mu(z) d\mu(w).$$
Now the kernel $\mathbf{K}_T : X \times X \to \R$ on $X$ can be represented as an invariant kernel $K_T : \H \times \H \to \R$ under the diagonal action of $\Gamma$ on $\H \times \H$ as follows:
$$\mathbf{K}_T(z,w) = \sum_{\gamma \in \Gamma} K_T(z,\gamma \cdot w)$$
for any $(z,w) \in F \times F$. In our case, seeing $a$ as a $\Gamma$-invariant function on $\H$, we can write in the above
$$K_T(z,w) = \frac{1}{T}\int_0^T  e^{-t}  \int\limits_{B(z,t) \cap B(w,t)} a(x) \, d\mu(x) \, dt.$$ 
Thus in particular, we have that the Hilbert-Schmidt norm can be written as
$$\left\|\frac{1}{T}\int_0^T P_t a P_t \, dt\right\|_{\HS}^2 =  \int_F \int_F \Big| \sum_{\gamma \in \Gamma} K_T(z, \gamma \cdot w) \Big|^2 d\mu(z) d\mu(w).$$
Hence to prove Proposition \ref{prop:hsbound}, we need to estimate Hilbert-Schmidt norms of integral operators with invariant kernels, which we do in the following lemma. 

Recall $(F)_{\leq 2T}$ denotes the points in the fundamental domain $F$ with radius of injectivity less than $2T$:
$$(F)_{\leq R} = \{ z \in F  :  \inj_{X}(z) \leq 2T \},$$
and we denote by $(F)_{> 2T}$ the complement of this set in $F$. We write $\H(Y) = \Gamma \cdot F(Y)$ for all the images of the compact part of $F$ by the action of $\Gamma$.

\begin{lemma}\label{lma:normsurface}
Let $A$ be an integral operator on $X$ such that
\begin{equation}\label{e:HS}
\| A \|_{\HS}^2 = \int_F \int_F \Big| \sum_{\gamma \in \Gamma} K(z, \gamma \cdot w) \Big|^2 d\mu(z) d\mu(w).
\end{equation}
for a kernel $K : \H \times \H \to \R$ invariant under the diagonal action of $\Gamma$ on $\H \times \H$. Fix $T, Y > 0$. We assume that the kernel $K$ satisfies $K(z,w) \neq 0$ when 
\begin{itemize}
\item[(1)] $d(z,w) \leq 2T$ and $z,w \in \H(Y)$, or 
\item[(2)] there exists $x \in \H(Y)$ such that $d(z,x) \leq T$ and $d(w,x) \leq T$.
\end{itemize}
Then we have:
$$
\| A \|_{\HS}^2 \leq \int_F \int_\H  |K(z, w) |^2 d\mu(z) d\mu(w) 
+ \frac{e^{4T}}{\min\{1,{\inj_{X(Y e^T)}}^2 \}}\Vol\left( (F)_{\leq 2T}\right) \sup_{(z,w) \in F\times\H} |K(z,w)|^2.
$$
\end{lemma}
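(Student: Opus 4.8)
The plan is to expand the square $\bigl|\sum_{\gamma\in\Gamma}K(z,\gamma w)\bigr|^2$ and split the double sum over pairs $(\gamma,\gamma')$ according to whether $\gamma^{-1}\gamma'$ acts ``trivially enough'' near $z$ or not. More precisely, after unfolding one of the two integrals (replacing $\int_F\sum_{\gamma}$ by $\int_\H$), the leading term $\int_F\int_\H |K(z,w)|^2\,d\mu(w)\,d\mu(z)$ comes from the diagonal pairs $\gamma=\gamma'$; this already gives the first term on the right-hand side. The remaining contribution is the off-diagonal sum $\sum_{\gamma\neq\id}\int_F\int_\H K(z,w)\overline{K(z,\gamma w)}\,d\mu(w)\,d\mu(z)$ (again after unfolding), and everything rests on showing this is bounded by the second term.

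First I would observe that, by the support condition on $K$, the product $K(z,w)\overline{K(z,\gamma w)}$ is nonzero only if $d(z,w)\le R$ and $d(z,\gamma w)\le R$, hence $d(w,\gamma w)\le 2R$, i.e. $d(z,\gamma z)\le 2R+ (\text{something})$; in any case the integrand forces $z$ to lie within bounded distance of a point moved a bounded amount by $\gamma$, so in particular $z\in (F)_{\le R'}$ for an $R'$ comparable to $R$ (this is where one must be a little careful with constants, but since the statement only claims $d(z,w)\le R$ one gets $z\in(F)_{\le R}$ directly because $2\inj_X(z)\le d(z,\gamma z)\le d(z,w)+d(w,\gamma w)+d(\gamma w,\gamma z)\le \dots$ — I would chase this inequality to land exactly on $(F)_{\le R}$). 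Then on this set I bound $|K(z,w)\overline{K(z,\gamma w)}|\le \sup|K|^2$ and integrate: $\int_{(F)_{\le R}}\int_\H \1_{\{d(z,w)\le R\}}\,d\mu(w)\,d\mu(z)\le \Vol((F)_{\le R})\cdot\Vol(B(0,R))$, which would give a bound with $\Vol(B(0,R))\asymp e^{R}$ rather than the claimed $e^{2R}$; but the off-diagonal sum over $\gamma\neq\id$ contributes an extra multiplicity factor counting how many $\gamma$ can satisfy the constraint for a given $z,w$, and this is where the factor $1/\inj_{X(Y)}^2$ enters.

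The key counting step is: for fixed $z$ with $\inj_X(z)\ge\inj_{X(Y)}=:\iota$ and fixed $w$, the number of $\gamma\in\Gamma$ with $d(z,\gamma w)\le R$ is $\lesssim e^{2R}/\iota^2$, because the balls $B(\gamma w,\iota/2)$ — wait, better: the points $\gamma w$ with $d(z,\gamma w)\le R$ all lie in $B(z,R)$, and any two of them are at distance $\ge 2\iota$ from each other when $w\in\H(Y)$ (since $d(\gamma w,\gamma' w)=d(w,\gamma^{-1}\gamma' w)\ge 2\inj_X(w)\ge 2\iota$), so the disjoint balls $B(\gamma w,\iota)$ are all contained in $B(z,R+\iota)$, giving a count $\lesssim \Vol(B(z,R+\iota))/\Vol(B(0,\iota))\lesssim e^{2R}/\min\{1,\iota^2\}$ using $\Vol(B(0,\iota))\asymp \iota^2$ for small $\iota$ and the $e^{2R}$ from $\Vol(B(0,R))$. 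Combining: the off-diagonal part is at most $\bigl(e^{2R}/\min\{1,\iota^2\}\bigr)\cdot\Vol((F)_{\le R})\cdot\sup|K|^2$, which is exactly the second term. I expect the main obstacle to be bookkeeping the distance inequalities so that the constraint set for $z$ is literally $(F)_{\le R}$ and not $(F)_{\le cR}$, and making sure the volume of a radius-$R$ ball is correctly absorbed into $e^{2R}$ (up to the constant hidden in $\lesssim$) rather than producing an $e^R$ or $e^{3R}$; a secondary technical point is justifying the unfolding of one integral rigorously given that $K$ is only bounded measurable, which follows from the compact support hypothesis by Tonelli/Fubini.
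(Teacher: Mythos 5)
Your proposal is correct and is essentially the paper's argument: the paper organises the same computation as a split of the $w$-integral into the thick part (where at most one $\gamma$ contributes, yielding the unfolded diagonal term) and the thin part (where Cauchy--Schwarz introduces exactly your multiplicity count $N_\Gamma(R;z,w)$, bounded by the same packing argument using the $2\inj_{X(Y)}$-separation of orbit points for points in $\H(Y)$). Your two flagged worries resolve as you hope: placing the thin-part constraint on the translated variable (for which $d(w,\gamma w)\le 2R$ gives $\inj_X(w)\le R$ directly) lands exactly on $(F)_{\le R}$, and the correct count is $e^{R}/\min\{1,\inj_{X(Y)}^2\}$ rather than $e^{2R}/\inj_{X(Y)}^2$, the second factor of $e^{R}$ being supplied by $\Vol(B(R))$.
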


\begin{proof}
This is a more general version of Lemma 5.1 of \cite{LS17} on Hilbert-Schmidt norm estimates in terms of the injectivity radius, that allows us to treat the case when $X$ is not compact. Write $R = 2T$. Assume first that the case (1) holds for the support of $K$, that is, $K(z,w) \neq 0$ when $d(z,w) \leq 2T$ and $z,w \in \H(Y)$. We split the integral \eqref{e:HS} into two parts over points with small and large radius of injectivity, and use that in the first part, the sum over $\Gamma$ is reduced to one term.
\begin{align*}
	\| A \|_{\HS}^2 = 
	\int_{(F)_{> R}} \int_F \sum_{\gamma \in \Gamma} | K(z, \gamma \cdot w) |^2 d\mu(z) d\mu(w) + \int_{(F)_{\leq R}} \int_F | \sum_{\gamma \in \Gamma} K(z, \gamma \cdot w) |^2 d\mu(z) d\mu(w).
\end{align*}
We get using the Cauchy-Schwarz inequality that
 $$| \sum_{\gamma \in \Gamma} K(z, \gamma \cdot w) |^2 \leq N_\Gamma(R;z,w) \sum_{\gamma \in \Gamma} |K(z, \gamma \cdot w) |^2,$$
with the lattice counting parameter
$$N_\Gamma(R;z,w) = \sharp \{\gamma \in \Gamma : d(z,\gamma w) \leq R\}.$$
Now since $z, w \in \H(Y)$, for any $\gamma \in \Gamma$ such that $d(z,\gamma w) \leq R$, we have $$B(\gamma w, \inj_{X(Y)}) \subset B(z, R+ \inj_{X(Y)})$$ and if $\gamma' \in \Gamma - \{\gamma\}$ then $\gamma' w \notin B(\gamma w, \inj_{X(Y)})$. We deduce that the number of lattice points $N_\Gamma(R;z,w)$ is bounded by the number of balls of radius $\inj_{X(Y)}$ that one can fit in a ball of radius $R + \inj_{X(Y)}$, that is
$$|N_\Gamma(R;z,w)| \leq  \frac{\cosh(R + \inj_{X(Y)}) - 1 }{\cosh(\inj_{X(Y)}) - 1} 
\lesssim \frac{e^R}{\min\{1,{\inj_{X(Y)}}^2 \}}$$
where the implied constant is universal.
The rest is similar to the proof of Lemma 5.1 in \cite{LS17}: we have
$$ \| A \|_{\HS}^2 \lesssim 
	\int_{F} \int_\H | K(z, w) |^2 d\mu(z) d\mu(w)
	+  \frac{e^R}{\min\{1,{\inj_{X(Y)}}^2 \}} \int_{(F)_{\leq R}} \int_\H |K(z, w) |^2 d\mu(z) d\mu(w).
$$
The second term on the right-hand side is bounded by 
$$ \frac{e^R}{\min\{1,{\inj_{X(Y)}}^2 \}} \Vol(B(R))\Vol\left( (F)_{\leq R}\right) \sup_{(z,w) \in F\times\H} |K(z,w)|^2,$$
and $\Vol(B(R)) \lesssim e^R$, which concludes the proof in the case (1). For the case (2), that is, $K(z,w) \neq 0$ when there exists $x \in \H(Y)$ such that $d(z,x) \leq T$ and $d(w,x) \leq T$. We now note that if such $x$ exist, then the height of $z$ and $w$ is bounded from above by $T e^{T}$. Therefore, we need to adjust the estimate with the division by $\inj(Y e^{T}) \geq \inj(Y)$ as $e^T \geq 1$.
\end{proof}

We are interested in the invariant kernel
$$K_T(z,w) = \frac{1}{T}\int_0^T  e^{-t} \int\limits_{B(z,t) \cap B(w,t)} a(x) \, d\mu(x) \, dt$$ 
associated with $\frac{1}{T}\int_0^T P_t a P_t \, dt$. We see that $K(z,w) = 0$ whenever $d(z,w) \geq 2T$ so Lemma \ref{lma:normsurface} can be applied with $R = 2T$. Hence in order to prove Proposition \ref{prop:hsbound} we are left with proving $L^2$ and $L^\infty$ estimates for our invariant kernel. 

The $L^\infty$ bound is straightforward, we have
\begin{equation}\label{lma:linftybound}
\sup_{(z,w) \in F\times\H} |K_T(z,w)|^2 \lesssim \|a\|_\infty^2,
\end{equation}
since $\Vol(B(z,t) \cap B(w,t)) \lesssim \Vol(B(t)) \lesssim e^t$ for all $(z,w) \in F \times \H$.

The $L^2$ bound is at the core of our analysis.

\begin{lemma}\label{lma:l2bound} If $T > 0$, then we have
$$\int_F \int_\H  |K_T(z,w)|^2 d\mu(z) d\mu(w) \lesssim \frac{\|a\|_2^2}{T \rho(\beta)^2}.$$
\end{lemma}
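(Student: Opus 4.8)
The plan is to bound $\|K_T\|_{L^2(F\times\H)}^2$ by reducing it, via the Cauchy--Schwarz inequality applied to the $t$-average and Fubini, to a spectral estimate for a single operator $P_t a P_t$, and then to invoke the spectral gap through the $L^2\to L^2$ operator norm of $P_t$. Concretely, write
$$
\int_F\int_\H |K_T(z,w)|^2\,d\mu(w)\,d\mu(z)
= \Bigl\| \tfrac1T\int_0^T P_t a P_t\,dt \Bigr\|_{\HS}^2 ,
$$
recalling that for any integral operator $B$ on $L^2(X)$ with Schwartz kernel built from an invariant kernel one has $\|B\|_{\HS}^2 = \int_F\int_\H |(\text{kernel})|^2$. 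Since $a$ has mean zero and compact support, $aP_t$ maps into the orthogonal complement of the constants; the first step is thus to replace $P_t$ on the left by its restriction $P_t^\perp$ to the mean-zero subspace, where $\|P_t^\perp\|_{L^2\to L^2}$ decays according to the spectral gap.

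The key step is the \emph{spectral estimate} for $P_t$ on the orthogonal complement of constants. Using the Selberg/spherical-function calculus (Proposition \ref{t:Stransform}) and the explicit kernel $\mathbf k_t(\rho)=e^{-t/2}\mathbf 1_{\{\rho\le t\}}$, one computes $\cS(\mathbf k_t)(r)$; it is comparable to $e^{-t/2}$ times an oscillatory/decaying factor, and for $r$ with $\tfrac14+r^2\ge \tfrac14$ one gets $|\cS(\mathbf k_t)(r)|\lesssim e^{-t/2}$ pointwise, while for the small (exceptional) eigenvalues $\lambda_j\in(0,\tfrac14)$, writing $\lambda_j=\tfrac14-s_j^2$ with $0<s_j\le \tfrac12-\beta$ for $\beta=\beta(\lambda_1)$ the spectral gap, one gets $|\cS(\mathbf k_t)(s_j)|\lesssim e^{-t(\tfrac12-s_j)}\le e^{-\beta t}$ — in all cases bounded by $C\rho(\beta)^{-1}$-type quantities after integrating in $t$. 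The upshot, by the spectral theorem on $X$ (the same decomposition \eqref{e:HS} used in Proposition \ref{prop:qvhs}) and Plancherel, is that for every fixed $t$,
$$
\| P_t a P_t \|_{\HS} \;=\; \| (aP_t)^* \,(aP_t)^{1/2}\cdots \|\quad\text{more precisely}\quad
\int_F\int_\H |[P_t a P_t](z,w)|^2 \;\lesssim\; \|P_t^\perp\|_{L^2\to L^2}^2\,\|a\|_2^2 \, e^{?t},
$$
so one must track the competition between the contraction $\|P_t^\perp\|^2$ and the $L^2$-size of the kernel. I would organise this as: (i) $\|P_t a P_t\|_{\HS}\le \|P_t^\perp\|_{L^2\to L^2}\,\|a P_t\|_{\HS}$ and $\|aP_t\|_{\HS}\le \|a\|_\infty \|\mathbf 1_{\spt a}P_t\|_{\HS}$, or alternatively (ii) go through the spectral side directly, bounding $\sum_j|\langle\psi_j,P_taP_t\psi_j\rangle|^2 + \int |\langle E,P_taP_t E\rangle|^2$ by $\sum_j |\cS(\mathbf k_t)(r_j)|^4 |\langle \psi_j,a\psi_j\rangle|^2+\cdots$ and Plancherel for $a$. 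Averaging the resulting bound $\lesssim \|a\|_2^2\, e^{-ct}$ (or $\lesssim \|a\|_2^2\,\rho(\beta)^{-2} t^{-2}$ after the $\tfrac1T\int_0^T\cdot\,dt$, using that $\tfrac1T\int_0^T e^{-ct}dt \lesssim \tfrac1{cT}$ and that $\rho(\beta)$ absorbs the rate $c$) over $t\in[0,T]$ and applying Cauchy--Schwarz in $t$ inside the $\HS$ norm,
$$
\Bigl\|\tfrac1T\int_0^T P_t a P_t\,dt\Bigr\|_{\HS}^2
\le \tfrac1T\int_0^T \|P_t a P_t\|_{\HS}^2\,dt
\;\lesssim\; \frac{\|a\|_2^2}{T\,\rho(\beta)^2},
$$
which is exactly the claimed estimate (with $\beta$ the spectral-gap parameter, $\rho(\beta)$ the associated function).

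The main obstacle, and the place requiring care, is the \emph{exceptional spectrum and the continuous spectrum near $\lambda=\tfrac14$}: there $\cS(\mathbf k_t)(r)$ does not enjoy the clean $e^{-t/2}$ decay, and naively $|\cS(\mathbf k_t)(r)|$ can be as large as order $1$ (before the spectral gap is used) or can even grow if one is not careful with the $y^s$ pieces of the Eisenstein series. Handling the Eisenstein contribution forces one to work with the \emph{truncated} Eisenstein series and control the error from truncation (the $\delta_{\ell j}y^s+\Phi_{\ell j}(s)y^{1-s}$ terms), which is where the hypothesis $\spt a\subset X(Y)$ and the zeroth-Fourier-mode projector $\Pi_Y^*$ enter; since $a$ is supported away from the cusps, $aP_t$ kills the constant/$y^{1-s}$ tails on the support, so these terms contribute nothing to $\langle E, aP_taE\rangle$-type quantities, but this must be argued rather than assumed. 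Once the spectral-gap input is correctly packaged into $\rho(\beta)$ — precisely so that $\tfrac1T\int_0^T|\cS(\mathbf k_t)(r)|^2dt$ and its fourth-power analogue are $\lesssim \rho(\beta)^{-2}/T$ uniformly over the relevant $r$ — the remaining steps (Cauchy--Schwarz in $t$, Plancherel for $a$, Fubini) are routine, and one recovers the statement of Lemma \ref{lma:l2bound}.
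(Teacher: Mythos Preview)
Your proposal has a genuine gap at two levels.

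First, the identification $\int_F\int_\H |K_T|^2 = \|\tfrac1T\int_0^T P_t a P_t\,dt\|_{\HS}^2$ is false. The Hilbert--Schmidt norm on $L^2(X)$ is $\int_F\int_F|\sum_{\gamma}K_T(z,\gamma w)|^2$; the quantity $\int_F\int_\H|K_T|^2$ is the \emph{unfolded} piece that appears as the first term on the right-hand side of Lemma~\ref{lma:normsurface}, and differs from the HS norm precisely by the cross terms coming from nontrivial $\gamma$. The whole point of separating off $\int_F\int_\H|K_T|^2$ is that it is \emph{not} a spectral quantity on $X$: it is the term one can access by ergodic/geometric methods on the plane (equivalently on $\Gamma\backslash\PSL(2,\R)$), while the remaining terms are controlled by the thin part. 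So you cannot bound it by invoking the spectral decomposition of $L^2(X)$.

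Second, and more seriously, even if you ignore the first point your spectral mechanism does not produce decay. You write that $\|P_t^\perp\|_{L^2\to L^2}$ ``decays according to the spectral gap'', but this is not true for the tempered spectrum. For real $r$ the Selberg transform $\cS(\mathbf{k}_t)(r)$ does not tend to $0$ as $t\to\infty$; indeed Lemma~\ref{lma:spectralaction} says that $\tfrac1T\int_0^T|\cS(\mathbf{k}_t)(r)|^2\,dt$ is bounded \emph{below} uniformly on $\tau^{-1}(I)$. Thus $\|P_t^\perp\|_{L^2\to L^2}$ stays of order~$1$, and the inequality $\|P_t a P_t\|_{\HS}\le \|P_t^\perp\|\,\|aP_t\|_{\HS}$ gives nothing. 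Your Eisenstein/truncation discussion is a red herring here: the continuous spectrum plays no role in this lemma.

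The paper's argument uses a completely different source of decay. One performs the change of variables $(z,w)\mapsto(\text{midpoint},\text{direction},\text{distance})$ to rewrite $K_T(z,w)$ as an average of $a$ (lifted to $\Gamma\backslash\PSL(2,\R)$) over the region $A_t(r)$ corresponding to $B(z,t)\cap B(w,t)$. Nevo's quantitative mean ergodic theorem (Theorem~\ref{thm:nevo}) then gives $\|\pi(A_t(r))a\|_2 \lesssim |A_t(r)|^{-\rho(\beta)}\|a\|_2$, and the geometric estimate $|A_t(r)|\lesssim e^{t-r/2}$ from the hyperbolic Pythagorean theorem, combined with Minkowski's inequality in $t$, produces the factor $1/(T\rho(\beta)^2)$. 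The decay comes from the \emph{size of the averaging set}, not from any contraction of $P_t$.
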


The proof of this follows from a quantitative ergodic theorem by Nevo published in \cite{Nev98} (see also \cite{LS17} for more explanations on the application to our setting).

Let $(\mathcal{X},\nu)$ be a probability space, and $G$ a group equipped with its left-invariant Haar measure $dg$, and a measure-preserving action on $\mathcal{X}$. 
For a collection of measurable sets $A_t \subset G$ we define the averaging operators 
$$\pi_\mathcal{X}(A_t)f(x) = \frac{1}{|A_t|} \int_{A_t} \, f(g^{-1} x) \, dg, \quad f \in L^2(\mathcal{X}), \quad x \in \mathcal{X}.$$
This result was proved by Nevo in \cite{Nev98} and stated in this form by Gorodnik and Nevo in \cite[Theorem 4.1]{GN15}:

\begin{theorem}\label{thm:nevo}
If $G$ is a connected simple Lie group equipped with a measure-preserving action on the probability space $(\mathcal{X},\nu)$ that has a spectral gap, then there exist $C, \theta > 0$ such that for any family $A_t \subset G$, $t \geq 0$, of measurable sets of positive measure, we have 
$$\left\| \pi_\mathcal{X}(A_t) f - \int_\mathcal{X} f \, d\mu\right\|_{L^2(\mathcal{X},\nu)} \leq C \, |A_t|^{-\theta} \, \|f\|_{L^2(\mathcal{X},\nu)}$$
for any $f \in L^2(\mathcal{X},\nu)$, where we denote by $|A_t|$ the measure of the set $A_t$. The constant $C$ depends only on $G$ and $\theta$ depends only on the spectral gap.
\end{theorem}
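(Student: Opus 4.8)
The plan is to recognise the statement as Nevo's quantitative mean ergodic theorem and to reconstruct its proof, the central feature being that the conclusion is an \emph{operator-norm} bound on a fixed subspace, so that no regularity whatsoever of the sets $A_t$ enters. First I would reduce to mean-zero functions. Since the $G$-action on $(\mathcal X,\nu)$ is measure preserving, each $\pi_\mathcal X(g)$ is a unitary operator on $L^2(\mathcal X,\nu)$ fixing the constants, hence preserving the subspace $L^2_0(\mathcal X,\nu)=\{f:\int_\mathcal X f\,d\nu=0\}$, and so does every average $\pi_\mathcal X(A_t)=\pi_\mathcal X(\beta_{A_t})$, where $\beta_{A_t}=|A_t|^{-1}\mathbf 1_{A_t}$ is a probability density on $G$. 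Decomposing $f=\int_\mathcal X f\,d\nu+f_0$ with $f_0\in L^2_0$ gives $\pi_\mathcal X(A_t)f-\int_\mathcal X f\,d\nu=\pi_\mathcal X(A_t)f_0$, so everything reduces to proving $\|\pi_0(\beta_{A_t})\|_{op}\le C|A_t|^{-\theta}$ for the restriction $\pi_0$ of $\pi_\mathcal X$ to $L^2_0$. The trivial bound $\|\pi_0(\beta_{A_t})\|_{op}\le\|\beta_{A_t}\|_{L^1(G)}=1$ is worthless; the resource to exploit is that $\beta_{A_t}$ is spread out, $\|\beta_{A_t}\|_{L^s(G)}=|A_t|^{1/s-1}$ for every $s\ge 1$, which decays in $|A_t|$ once $s<2$, and more generally so do all its convolution powers.

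The engine is the representation theory of the simple group $G$. The spectral-gap hypothesis means exactly that $\pi_0$ does not weakly contain the trivial representation, and for a connected \emph{simple} Lie group this self-improves to a quantitative statement: there is an integer $m=m(\text{gap})$ such that the tensor power $\pi_0^{\otimes m}$ is weakly contained in the regular representation $\lambda_G$, equivalently $\pi_0$ is strongly $L^{p}$ for some $p=p(\text{gap})<\infty$. In rank one --- the case $G=\PSL(2,\R)$ relevant to the paper --- this is elementary: the spherical unitary dual is the tempered principal series together with the complementary series, $\lambda_1(X)>0$ forces the complementary-series parameter to stay away from the trivial endpoint, and this bound is precisely the content of the function $\rho(\lambda_1)$ appearing elsewhere in the paper; in higher rank one invokes property (T) together with Cowling's theorem that representations without invariant vectors are uniformly $L^p$. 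From $\pi_0^{\otimes m}\prec\lambda_G$, the Cowling--Haagerup--Howe temperedness estimates and the tensor-power trick (applied to $u^{\otimes m}$, $w^{\otimes m}$) yield the pointwise bound $|\langle\pi_0(g)u,w\rangle|\lesssim\|u\|\,\|w\|\,\Xi(g)^{1/m}$, where $\Xi$ is the Harish-Chandra spherical function of $G$; for $K$-finite vectors this is classical, and the general case needed here follows by Nevo's smoothing argument (or from $\|\pi_0^{\otimes m}(f)\|_{op}\le\|\lambda_G(f)\|_{op}$ combined with the Kunze--Stein phenomenon).

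With this the conclusion is a Hölder estimate. For unit vectors $u,w\in L^2_0$,
\[
|\langle\pi_0(\beta_{A_t})u,w\rangle|\le\int_G\beta_{A_t}(g)\,|\langle\pi_0(g)u,w\rangle|\,dg\lesssim\frac{1}{|A_t|}\int_{A_t}\Xi(g)^{1/m}\,dg\le\frac{\|\Xi^{1/m}\|_{L^{q}(G)}}{|A_t|}\,|A_t|^{1/q'},
\]
where $q$ is any exponent with $\Xi^{1/m}\in L^q(G)$; since Harish-Chandra's estimate gives $\Xi\in L^{2+\varepsilon}(G)$ for every $\varepsilon>0$, one may take $q=2m+1$, obtaining $\|\pi_0(\beta_{A_t})\|_{op}\le C\,|A_t|^{-1/(2m+1)}$. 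Thus $\theta=1/(2m+1)$ depends only on the spectral gap through $m$, while $C=\|\Xi^{1/m}\|_{L^{2m+1}(G)}$ depends only on $G$, as asserted. Equivalently, one can bypass the pointwise bound and route the last step through $\|\pi_0^{\otimes m}(f)\|_{op}\le\|\lambda_G(f)\|_{op}$ and the Kunze--Stein inequality $\|\lambda_G(f)\|_{op}\lesssim\|f\|_{L^s(G)}$, $1\le s<2$, applied to a suitable convolution power of $\beta_{A_t}$.

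The whole difficulty is concentrated in the second paragraph: passing from the qualitative spectral gap to quantitative $L^p$-integrability of the matrix coefficients of $\pi_0$ with exponent controlled by the gap, and doing so for arbitrary (non-$K$-finite) vectors so that the final estimate is a genuine operator-norm bound valid simultaneously for every $f$ and every measurable family $A_t$. The remaining ingredients --- the tensor-power trick, the integrability $\Xi\in L^{2+\varepsilon}$, and the Hölder step --- are soft, but I would take care that the smoothing used to reach general vectors costs only a multiplicative constant and does not degrade the exponent $\theta$.
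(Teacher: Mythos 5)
The paper does not prove this statement at all: it is quoted directly from Gorodnik--Nevo \cite[Theorem 4.1]{GN15}, with the proof attributed to Nevo \cite{Nev98}. So there is no internal argument to compare against; what you have written is a reconstruction of the external proof, and in outline it is the right one: reduce to $L^2_0$, use that a spectral gap for an action of a connected simple Lie group upgrades to $\pi_0^{\otimes m}\prec\lambda_G$ for some $m$ controlled only by the gap (for $\PSL(2,\R)$ this is the complementary-series computation encoded in $\rho(\lambda_1)$ elsewhere in the paper), and then use Kunze--Stein/Harish-Chandra integrability to convert $\|\beta_{A_t}\|_{L^s(G)}=|A_t|^{1/s-1}$ into the decay $|A_t|^{-\theta}$, with no regularity of $A_t$ required.

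However, the step you actually display is not valid. The pointwise bound $|\langle\pi_0(g)u,w\rangle|\lesssim\|u\|\,\|w\|\,\Xi(g)^{1/m}$ holds only for $K$-finite vectors, and there with the factor $(\dim\langle Ku\rangle\dim\langle Kw\rangle)^{1/2}$; for arbitrary unit vectors in $L^2_0$ no pointwise domination by a power of $\Xi$ is possible. Already for $\pi=\lambda_G$ itself (the most tempered case, $m=1$) the matrix coefficients of general $L^2$ vectors are merely functions vanishing at infinity and can decay arbitrarily slowly, so they cannot be bounded by $\Xi$. Consequently the Hölder computation that integrates this bound over $A_t$ for general $u,w$ does not prove the operator-norm estimate, and "smoothing" cannot rescue it at the cost of only a multiplicative constant, since the class of vectors genuinely matters. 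The correct closing move is the one you relegate to a parenthesis: from $\pi_0^{\otimes m}\prec\lambda_G$ one deduces, via the spectral-radius/tensor-power argument applied to $\beta_{A_t}^{\ast}\ast\beta_{A_t}$, that $\|\pi_0(\beta_{A_t})\|_{\mathrm{op}}$ is bounded by a fixed positive power (depending only on $m$) of $\|\lambda_G(\beta_{A_t})\|_{\mathrm{op}}$, and Kunze--Stein gives $\|\lambda_G(\beta_{A_t})\|_{\mathrm{op}}\lesssim_s|A_t|^{1/s-1}$ for any $1\le s<2$. That chain is uniform over all $f\in L^2$ and all measurable $A_t$ of positive measure and yields the stated $C$ and $\theta$; you should promote it from alternative to main argument.
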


Theorem \ref{thm:nevo} applies in particular when $G = \PSL(2,\R)$ and $\mathcal{X} = \Gamma \backslash \PSL(2,\R)$ for $\Gamma$ co-finite as in our setting. The important point is then that the spectral gap of the Laplacian implies that the action of $G$ on $\mathcal{X}$ has a spectral gap, and that $\theta$ depends on the spectral gap of the Laplacian. Note that we could also write this theorem for any measurable set but we want to emphasise that we see this as an ergodic theorem (or equidistribution theorem), with the idea that $|A_t|$ is increasing with $t$.

In order to use Theorem \ref{thm:nevo}, we need to use a change of variable and lift the kernels $K_T$ to $\SL(2,\R)$.

\begin{proof}[Proof of Lemma \ref{lma:l2bound}]The proof is identical to the one in Section 7 of \cite{LS17}. We briefly reproduce its main steps for the convenience of the reader. We identify $\PSL(2,\R)$ with the unit tangent bundle $\{ (z,\theta) \in \H\times \S^1 \}$ of $\H$ (see Section \ref{sec:prelim}). We define $A_t(r) \subset \PSL(2,\R)$ to be a set such that $A_t(r)^{-1} \cdot (z,\theta)$ is the lift in the unit tangent bundle of two balls of radius $t$ with centres given by the projections $z_1$ and $z_2$ onto $X$ of the points $\phi_{-r/2}(z,\theta)$ and $\phi_{r/2}(z,\theta)$ of the unit tangent bundle. Here $\phi_t$ is the geodesic flow on the unit tangent bundle of $X$.

Here we recall some notation from  \cite[Lemma 7.1]{LS17}. Let $F$ be the fundamental domain associated with $X$. Writing $B_{2T} = \{(z_1,z_2) \in F \times \H : d(z_1,z_2) < 2T\}$, define a mapping $\Phi : B_{2T} \to F \times \S^1 \times (0,2T)$ by
$$\Phi(z_1,z_2) = (m(z_1,z_2),\theta(z_1,z_2),d(z_1,z_2)).$$
Here $m(z_1,z_2)$ is the middle point of the geodesic between $z_1$ and $z_2$, the vector $\theta(z_1,z_2)$ is the direction of the unit vector at $m(z_1,z_2)$ tangent to the geodesic between $z_1$ and $z_2$, and $d(z_1,z_2)$ is the geodesic distance between $z_1$ and $z_2$. Then \cite[Lemma 7.1]{LS17} states that for any $f : \H \times \S^1 \times [0,\infty) \to \C$ that satisfies
$$f(\gamma \cdot (z,\theta), r) = f(z,\theta,r) \quad \forall \gamma \in \Gamma,$$
there is a change of variable:
$$\iint_{B_{2T}} f(\Phi(z_1,z_2)) \, d\mu(z_1)\, d\mu(z_2)= \int_{0}^{2T} \sinh(r) \int_F \int_{\S^1} f(z,\theta,r)\, d\theta \, d\mu(z) \, dr.$$
We will use the following function:
$$f(z,\theta,r) := \left|\frac{1}{T} \int_{r/2}^T e^{-t} |A_t(r)| \, \pi(A_t(r)) a (z,\theta) \, dt\right|^2, \quad (z,\theta,r) \in \H \times \S^1 \times [0,\infty),$$
where
$$ \pi(A_t(r))a(z,\theta) = \frac{1}{|A_t(r)|} \int_{A_t(r)} \, f(g^{-1} \cdot (z,\theta)) \, dg.$$
This function satisfies for all $(z_1,z_2) \in B_{2T}$ that
$$f(\Phi(z_1,z_2)) = \left|\frac{1}{T}\int_{d(z_1,z_2)/2}^T  e^{-t} \int\limits_{A_t(d(z_1,z_2))} a(g^{-1} \cdot (m(z_1,z_2),\theta(z_1,z_2))) \, dg \, dt\right|^2.$$
The set $\{g^{-1} \cdot (m(z_1,z_2),\theta(z_1,z_2)) \, | \, g \in A_t(d(z_1,z_2)) \}$ is the lift to the unit tangent bundle of $B(z_1,t) \cap B(z_2,t)$ by definition of $A_t(d(z_1,z_2))$, and the Haar measure $dg$ descends onto the hyperbolic area $d\mu$ on $B(z_1,t) \cap B(z_2,t)$. As $B(z_1,t) \cap B(z_2,t) = \emptyset$ if $t < d(z_1,z_2) / 2$, we thus have
\begin{align*} f(\Phi(z_1,z_2)) = \Big|\frac{1}{T}\int_0^T  e^{-t} \int\limits_{B(z_1,t) \cap B(z_2,t)} a(x) \, d\mu(x) \, dt\Big|^2 = |K_T(z_1,z_2)|^2.
\end{align*}
Therefore, as $K_T(z_1,z_2) = 0$ if $d(z_1,z_2) \geq 2T$, we have by the change of variable that
\begin{align*}& \quad \,\, \int_{F} \int_\H | K_T(z_1, z_2) |^2 d\mu(z_2) d\mu(z_2) \\
&=  \int_0^{2T} \sinh r \int_F \int_{\S^1} \left|\frac{1}{T} \int_{r/2}^T e^{-t} |A_t(r)| \, \pi(A_t(r)) a (z,\theta) \, dt\right|^2  \, d\theta \, d\mu(z) \,dr.\end{align*}
Then Minkowski's integral inequality yields
	\begin{align*}
& \int_0^{2T} \sinh r \int_F \int_{\S^1} \left|\frac{1}{T} \int_{r/2}^T e^{-t} |A_t(s)| \, \pi(A_t(r)) a (z,\theta) \, dt\right|^2  \, d\theta \, d\mu(z) \,dr,\\
		&\quad \leq \int_0^{2T} \sinh r \left( \frac{1}{T} \int_{r/2}^T e^{-t} |A_t(r)|
		\left\| \pi(A_t(r)) a \right\|_{L^2(F\times\S^1)} \, dt\right)^2\,dr.
	\end{align*}
By Theorem \ref{thm:nevo} with $G = \PSL(2,\R)$ and $\mathcal{X} = \Gamma \backslash \PSL(2,\R)$, there is a constant $\rho(\beta) > 0$ depending only on the spectral gap $\beta$ of the Laplacian and a constant $C > 0$ that only depends on $G = \PSL(2,\R)$ by Theorem \ref{thm:nevo} such that
$$\left\| \pi(A_t(r)) a \right\|_{L^2(F\times\S^1)} \leq C \, |A_t(r)|^{-\rho(\beta)} \, \|a\|_2.$$
Hence the previous integral is bounded by
	\begin{align} \label{eq:finalone}
		\int_0^{2T} \sinh r \left( \frac{1}{T} \int_{r/2}^T e^{-t} |A_t(s)|^{1-\rho(\beta)}
		\left\| a \right\|_2 \, dt\right)^2\,dr.
	\end{align}
	Note that as $A_t(s)$ is given by the lift of intersection $B_1 \cap B_2$ of two balls $B_1,B_2 \subset \H$ of radius $t$ such that their centres is at a distance $s$ from each other and that $B_1 \cap B_2$ is contained in a ball $B(z,\rho)$ of radius $\rho > 0$ that satisfies $\cosh \rho = \frac{\cosh t}{\cosh(r/2)}$ by the hyperbolic Pythagoras theorem. As a hyperbolic disc, the area of $B(z,\rho)$ is $4\pi \sinh^2(\rho/2)$, which is therefore bounded above by $Ce^{t-r/2}$, for some uniform constant $C > 0$. A picture and more details on this can be found from Figure 2 of \cite{LS17}. Thus we know that for some uniform constant
	$$|A_{t}(r)| \lesssim e^{t-r/2}.$$
	We can thus bound \eqref{eq:finalone} with a uniform constant times
	\begin{align*}
\int_0^{2T} \sinh r \left( \frac{1}{T} \int_{r/2}^T e^{-s/2} e^{-\rho(\beta)(t-r/2)}
		\left\| a \right\|_2 \, dt\right)^2\,dr \lesssim \frac{1}{T^2}  \int_0^{2T} \frac{\|a\|_2^2}{\rho(\beta)^2} \,dr  \lesssim \frac{\|a\|_2^2}{T \rho(\beta)^2},
	\end{align*}
	so the proof of the claim is complete
	\end{proof}	

Combining \eqref{lma:linftybound} and Lemma \ref{lma:l2bound} with Lemma \ref{lma:normsurface} we thus proved the desired bound claimed in Proposition \ref{prop:hsbound}. Together with Proposition \ref{prop:qvhs}, this completes the proof of Proposition \ref{l:zero}.

\section{General case: Proof of Theorem \ref{thm:main} and Theorem \ref{thm:mainBS}} \label{s:general}

In this section, we treat the general case of observables with non-zero mean, and prove Theorem \ref{thm:main} and Theorem \ref{thm:mainBS}. 

\subsection{Proof of Theorem \ref{thm:main}}
If $a$ is a test function that does not have mean $0$, i.e.
$$\overline{a} := \frac{1}{\Vol(X)} \int a(z) \, dz \neq 0$$
we fix an arbitrary $Y \geq Y_a$ where $Y_a$ is defined as the smallest height such that the support of $a$ is in $X(Y_a)$. We then define
$$b(z) := a(z) - \overline{a} \chi(z) , \quad z \in X$$
where 
$$\chi(z) = \begin{cases}
\frac{\Vol(X)}{\Vol(X(Y))}, & \text{if } z \in X(Y);\\
0, &  \text{otherwise}.
\end{cases}
$$
This idea to use such a symbol is similar to what is done in \cite{BZ16}, albeit simplified by the fact we do not need $b$ smooth, as our proof works for $L^\infty$ test functions.

By this choice of $\chi$ we have that
$$\int_X b(z) \, dz = \int_X a(z) \, dz -\overline{a}   \int \chi(z) \, dz = 0.$$
Write
$$\widetilde \Var_{X,I}(a) = (N(X,I)+M(X,I)) \Var_{X,I}(a).$$
Then we have
$$\widetilde \Var_{X,I}(a) \lesssim  \widetilde \Var_{X,I}(b) +  \overline{a} \left( \sum_{\lambda_j \in I} \left| \int (\chi(z) - 1) |\psi_j(z)|^2 \, dz \right|  +  \cE_{X,I} \right), $$
where
$$\cE_{X,I} = \int_{\tau^{-1}(I)} \left|\int_{X} \chi(z) \sum_{j = 1}^k |E_j(r,z)|^2 \, dz + \frac{\phi_X'(\frac12 + ir)}{\phi_X(\frac12+ ir)} \right| \, dr.$$
By Proposition \ref{l:zero} we have 
$$ \Var_{X,I}(b) \lesssim_I C(X,I) \left( \frac{1}{ \,\rho(\lambda_1) R} \|b\|_2^2 + \frac{e^{2R}}{\min\{1,{\inj_{X(Y e^{R/2})}}^2 \}} \Vol((X)_{\leq R} ) \|b\|_\infty^2 \right)^{1/2},$$
and using that $a$ is supported inside $X(Y)$ we can compute that
$ \| b \|_\infty \leq 2 \| a \|_\infty$, and $\| b \|_2 \leq 2 \| a \|_2$. Moreover, $z \mapsto \chi(z) - 1$ is of mean $0$ and we have
\begin{align*}
&\frac1{N(X,I) + M(X,I)} \sum_{r_j \in I} \left| \int (\chi(z) - 1) |\psi_j(z)|^2 \, dz \right|^2 \leq \Var_{X,I} (\chi - 1)\\
	&\qquad \lesssim_I C(X,I) \left( \frac{1}{ \,\rho(\lambda_1) R} \|\chi - 1\|_2^2 + \frac{e^{2R}}{\min\{1,{\inj_{X(Y e^{R/2})}}^2 \}} \Vol((X)_{\leq R} ) \|\chi - 1\|_\infty^2\right)^{1/2}.
\end{align*}
Note now that by Cauchy-Schwartz inequality 
$$\overline{a}^2 \leq \frac{1}{|X|} \|a\|_2^2 \leq \|a\|_\infty^2$$ 
and we also have by the definition of $\chi$ that $\| \chi - 1 \|_\infty \leq 1$. Hence
$$\overline{a}^2 \| \chi - 1 \|_2^2 \leq \| a \|_2^2\quad \text{and} \quad \overline{a}^2 \| \chi - 1 \|_\infty^2 \leq \|a\|_\infty^2,$$ 
so in the end
\begin{align*}
\Var_{X,I}(a) &\lesssim_I C(X,I) \left(\frac{1}{ \,\rho(\lambda_1) R} \|a\|_2^2 + \frac{e^{2R}}{\min\{1,{\inj_{X(Ye^{R/2})}}^2 \}} \Vol((X)_{\leq R} ) \|a\|_\infty^2\right)^{1/2} \\
&\quad  + \frac1{N(X,I) + M(X,I)} \overline{a} \cE_{X,I}.
\end{align*}
Therefore just need to estimate $\overline{a} \cE_{X,I}$. 
We have
\begin{align*}
\overline{a} \cE_{X,I} &\leq \overline{a} \frac{\Vol(X)}{\Vol(X(Y))} \int_{\tau^{-1}(I)} \left|\int_{X(Y)} \sum_{j = 1}^k |E_j(r,z)|^2 \, dz +  \frac{\Vol(X(Y))}{\Vol(X)}  \frac{\phi_X'(\frac12 + ir)}{\phi_X(\frac12+ ir)} \right| \, dr\\
&\leq \|a\|_\infty \int_{\tau^{-1}(I)} \left|\int_{X(Y)} \sum_{j = 1}^k |E_j(r,z)|^2 \, dz +  \frac{\Vol(X(Y))}{\Vol(X)}  \frac{\phi_X'(\frac12 + ir)}{\phi_X(\frac12+ ir)} \right| \, dr
\end{align*}
We can use the Maass-Selberg relations (see \cite[Section 2]{Sarnak1981}).
\begin{lemma}[Maass-Selberg relations]\label{lma:maassselberg}
Suppose $X$ has $k$ cusps. $s = \frac{1}{2} + ir$. Then for any $y$ we have
\begin{align*}\int\limits_{X(Y)} \sum_{j=1}^k |E_j(s,z)|^2 \, dz  = &\,\, 2 k \log Y  - \frac{\phi_X'(s)}{\phi_X(s)} + \Tr \Big(\frac{Y^{2i r} \Phi_X^*(s) - Y^{-2i r} \Phi_X(s)}{2i r}\Big) \\
& + \int\limits_{X \setminus X(Y)}\sum_{j=1}^k |\Pi_Y^* E_j(s,z)|^2 \, dz
\end{align*}
\end{lemma}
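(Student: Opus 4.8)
The plan is to compute the truncated inner product $\int_{X(Y)} \sum_{j=1}^k |E_j(s,z)|^2\, dz$ directly by regularizing. First I would fix $s = \tfrac12 + ir$ with $r \neq 0$ and work with the truncated Eisenstein series $\Lambda^Y E_j$, where $\Lambda^Y$ subtracts the constant (zeroth Fourier mode) terms $\delta_{j\ell} y^s + \Phi_{j\ell}(s) y^{1-s}$ from $E_j$ above height $Y$ in each cusp $\ell$. The key identity is that $E_j(s,z) - \Lambda^Y E_j(s,z)$ is supported in the cuspidal zones and equals the explicit constant-term expression there, so
$$\int_{X(Y)} \sum_j |E_j(s,z)|^2\, dz = \int_{X} \sum_j |\Lambda^Y E_j(s,z)|^2\, dz \;-\; \int_{X\setminus X(Y)} \sum_j |\Pi_Y^* E_j(s,z)|^2\, dz \;+\; (\text{cuspidal correction}),$$
and the whole point is that $\Lambda^Y E_j$ is square-integrable on all of $X$ even though $E_j$ is not. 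So the strategy is: (1) write $\int_X \sum_j |\Lambda^Y E_j|^2$ using the fact that $\Lambda^Y E_j$ is a genuine $L^2$ function, and (2) compute this via a Green's-formula / Maass–Selberg contour argument.

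The main computation (step 2) is the classical Maass–Selberg trick: one considers $\int_{X(Y)} (\Delta E_j(s,\cdot) \overline{E_\ell(s',\cdot)} - E_j(s,\cdot)\overline{\Delta E_\ell(s',\cdot)})\, d\mu$, which vanishes pointwise since both are eigenfunctions, but by Green's formula equals a boundary integral over the $k$ horocycles $\partial X(Y)$ at height $Y$. Plugging in the Fourier expansions $E_j^{(\ell)}(s,\sigma_\ell(x,y)) = \delta_{j\ell} y^s + \Phi_{j\ell}(s) y^{1-s} + (\text{nonzero modes})$, the nonzero Fourier modes decay (they involve Bessel functions $K_{ir}$) and contribute nothing to the boundary term in the limit; only the constant terms survive. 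Integrating the boundary terms in $x$ kills cross terms between different Fourier modes, and one is left with an expression in $y = Y$, $s$, $s'$, $\Phi(s)$, $\Phi(s')$ and the eigenvalue difference $s(1-s) - s'(1-s')$. Setting $s' \to s$ (or $\bar s = 1-s$ on the critical line) and carefully taking the limit — the eigenvalue difference tends to $0$, so one gets a derivative, producing the $2k\log Y$ term from $\int_Y (\cdot)\, \tfrac{dy}{y^2}$-type integrals of $|y^s|^2 = y$ and the $\Tr\big(\tfrac{Y^{2ir}\Phi^*(s) - Y^{-2ir}\Phi(s)}{2ir}\big)$ term from the cross terms $y^s \overline{y^{1-s}}$; the $-\phi_X'/\phi_X(s)$ term comes from differentiating $\det$, using $\Phi(s)\Phi(1-s) = I$ and unitarity on the critical line (so $\Phi^*(s) = \Phi(1-s)$ and $\tfrac{d}{ds}\log\det\Phi = \Tr(\Phi^{-1}\Phi')$). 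I would organize this as: establish the Fourier expansion and decay of nonzero modes; apply Green's formula on $X(Y)$; evaluate the boundary integral explicitly in terms of constant terms; perform the limiting procedure $s' \to s$.

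The main obstacle is the limiting argument $s' \to s$: the inner integral $\int_X \sum_j |\Lambda^Y E_j(s,z)|^2\, d\mu$ is literally a limit of $\langle \Lambda^Y E_j(s,\cdot), \Lambda^Y E_\ell(s',\cdot)\rangle$ as $s' \to s$, but $\Lambda^Y E_j$ is not an eigenfunction (the truncation destroys that), so one must track the extra terms the truncation introduces — these are exactly $\int_{X\setminus X(Y)}\sum_j |\Pi_Y^* E_j|^2$, the last term in the statement. Concretely, $\Delta \Lambda^Y E_j = s(1-s)\Lambda^Y E_j + (\text{delta-like terms on }\partial X(Y))$ from differentiating the cutoff, and bookkeeping these boundary contributions correctly against the $0/0$ behavior of $\tfrac{1}{s(1-s) - s'(1-s')}$ is the delicate part; one wants to arrange the computation so that the genuinely divergent pieces (as $Y \to \infty$, i.e. the $2k\log Y$) are isolated cleanly and the finite pieces assemble into $-\phi_X'/\phi_X(s)$ and the trace term. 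Since this is entirely classical (it is in Iwaniec's book and Sarnak's notes, cited as \cite{Sarnak1981}), I would in practice just reproduce that argument with the present normalization of $Y$ and cusp conventions, taking care that the horocycles have length $1/Y$ here rather than the more common $2\pi$.
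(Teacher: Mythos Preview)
The paper does not give its own proof of this lemma; it simply states the result with a citation to \cite[Section 2]{Sarnak1981} (and implicitly to Iwaniec's book). Your proposal correctly outlines the classical Maass--Selberg argument found in those references --- truncation, Green's formula on $X(Y)$, evaluation of the boundary integral via the constant terms of the Fourier expansion, and the limiting procedure $s' \to s$ producing the logarithmic, scattering-determinant, and trace terms --- so your approach is exactly what the cited sources do and what the paper is invoking.
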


As the scattering matrix $\Phi_X(s)$ is unitary when $\Re(s) = 1/2$, we have in this case $|\Tr \Phi_X(s)| \leq k$. Hence by the linearity of the trace
$$\left| \Tr \Big(\frac{Y^{2i r} \Phi_X^*(s) - Y^{-2i r} \Phi_X(s)}{2i r}\Big) \right| = \frac{|\sin(2r \log Y)|}{r} |\Tr \Phi_X(s)| \leq \frac{|\sin(2r \log Y)|}{r}k,$$
using that $\Tr \Phi_X(s)^* = \Tr \Phi_X(s)$. Moreover, as $s = \frac{1}{2} + ir$, we have for all $z = x+ iy \in X \setminus X(Y)$ that
$$|\Pi_Y^* E_j(s,z)| \lesssim_I e^{-2\pi y}$$
where the implied constant depends on the spectral interval $I$ (see for example Iwaniec \cite[(6.20)]{Iwa02}).

We thus have for $Y \geq Y_a$
\begin{align*}
\overline{a} \cE_{X,I} 
& \lesssim_I  \|a\|_\infty \left( 2k \log Y + k^2 e^{-4\pi Y} \right) + \|a\|_\infty \left( 1 - \frac{\Vol(X(Y))}{\Vol(X)}  \right) \int_{\tau^{-1}(I)} \left|\frac{\phi_X'(\frac12 + ir)}{\phi_X(\frac12+ ir)} \right| \, dr
\end{align*}
where we used that $\Vol(X \setminus X(Y)) \leq k$. We then notice that
\begin{align*}
1 - \frac{\Vol(X(Y))}{\Vol(X)} &= \frac{ \Vol(X \setminus X(Y))}{\Vol(X)} \leq \frac{k}{\Vol(X)}.
\end{align*}

Finally, the proof of Theorem \ref{thm:main} is concluded if we can establish the following geometric bound for the scattering determinant:

\begin{lemma} Recall $$M(X,I) := \frac{1}{4\pi} \int_{\tau^{-1}(I) }\frac{-\phi_X'}{\phi_X}\left(\tfrac{1}{2}+ ir \right)\, dr$$
Suppose $X$ has $k$ cusps.  Then
$$\int_{\tau^{-1}(I)} \left|\frac{\phi_X'(\frac12 + ir)}{\phi_X(\frac12+ ir)} \right| \, dr \lesssim_I M(X,I) + k \log \Vol(X) + \Vol(X)$$
\end{lemma}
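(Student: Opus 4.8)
The plan is to bound $\int_{\tau^{-1}(I)} |\phi_X'/\phi_X(\tfrac12+ir)|\,dr$ by splitting the logarithmic derivative of the scattering determinant into its negative and positive parts. Recall that $M(X,I) = \frac{1}{2\pi}\int_{\tau^{-1}(I)} \frac{-\phi_X'}{\phi_X}(\tfrac12+ir)\,dr$ is by definition (up to the factor $2\pi$) the integral of the negative part being ``genuinely negative'', but $-\phi_X'/\phi_X$ need not have a sign on all of $\tau^{-1}(I)$, so the first task is to control $\int (\phi_X'/\phi_X)_+\,dr$, i.e. the part where $-\phi_X'/\phi_X$ is negative. The standard tool here is the Hadamard factorisation of $\phi_X(s)$ combined with the functional equation $\phi_X(s)\phi_X(1-s) = 1$ (so $\phi_X$ has no zeros or poles on the line $\Re s = \tfrac12$ except possibly at $s = \tfrac12$, and its zeros/poles come in symmetric pairs). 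Writing $\frac{\phi_X'}{\phi_X}(\tfrac12+ir)$ as a sum over zeros $\rho$ of $\phi_X$ of terms like $\frac{1}{\tfrac12+ir - \rho} + \frac{1}{\tfrac12+ir - (1-\bar\rho)}$ plus the contribution of the ``archimedean'' gamma-factors coming from the explicit form of the scattering matrix for a surface with $k$ cusps.

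First I would recall (from Iwaniec \cite{Iwa02}, or Selberg's theory) that $\phi_X(s)$ has the form $\phi_X(s) = \left(\frac{\sqrt{\pi}\,\Gamma(s-\tfrac12)}{\Gamma(s)}\right)^{k} \cdot D(s)$ (schematically) where $D(s)$ is a Dirichlet series in $e^{-s}$ with bounded coefficients, absolutely convergent for $\Re s$ large, with first ``frequency'' bounded below by a quantity controlled by the systole / geometry — and more relevantly, that the number of zeros/poles of $\phi_X$ in a box $\{|\Re s - \tfrac12| \le 2,\ |\Im s - r| \le 1\}$ is $O(\log \Vol(X) + \log(|r|+2) + k)$. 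This local counting bound is the non-compact analogue of the Weyl law for the winding of the scattering matrix, and it is exactly what converts a pointwise expression $\frac{\phi_X'}{\phi_X}(\tfrac12+ir) = O\big(\sum_{|\gamma - r| \le 1}\frac{1}{|\tfrac12+ir-\rho|}\big) + O(\log(|r|+2)) + O(k)$ into an $L^1$ bound over the compact set $\tau^{-1}(I)$. Integrating the sum over zeros against $dr$ on $\tau^{-1}(I)$, each zero contributes $O(1)$ (the integral of $1/|\tfrac12+ir-\rho|$ over a unit interval is bounded when $\Re\rho$ stays within $O(1)$ of $\tfrac12$, which it does by the functional equation and the absence of zeros in the half-plane $\Re s > 1$), and the total number of relevant zeros is $O(\log\Vol(X) + k)$ since $I$ is fixed (so $|r|$ is bounded). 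This yields $\int_{\tau^{-1}(I)} |\phi_X'/\phi_X| \,dr \lesssim_I \log\Vol(X) + k + (\text{number of zeros with } |\gamma|\lesssim_I 1)$.

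The remaining point is to absorb the zero count into $M(X,I) + k\log\Vol(X)$. Here one uses that $-\phi_X'/\phi_X(\tfrac12+ir)$, being (essentially) the derivative of the scattering phase, has a definite average sign: by the argument principle applied to a contour around $\tau^{-1}(I)$ pushed to the right, $\int_{\tau^{-1}(I)} \frac{-\phi_X'}{\phi_X}(\tfrac12+ir)\,dr$ equals $2\pi$ times (number of zeros of $\phi_X$ inside the relevant region) minus a boundary term which is $O(k\log\Vol(X))$ coming from the gamma-factors and the bound $|\log|\phi_X(\sigma+ir)|| \lesssim k\log\Vol(X)$ for $\sigma$ slightly bigger than $1$ (using $|D(\sigma+ir)-1|$ small and the number of cusps to count the gamma factors). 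Rearranging gives (number of zeros) $\lesssim M(X,I) + k\log\Vol(X)$, and combined with the previous paragraph this gives the claim — indeed the extra $\Vol(X)$ term in the statement gives slack, so I do not even need the sharpest form of these estimates. The main obstacle I anticipate is getting the local zero-counting bound for $\phi_X$ with the correct dependence $O(\log\Vol(X) + k)$ on the geometry: this requires either Jensen's formula applied to $\phi_X$ on a disc of radius $O(1)$ around $\tfrac12+ir$ together with an explicit upper bound $\log|\phi_X(s)| \lesssim k\log\Vol(X) + \log(|\Im s|+2)$ on the bounding circle — which in turn needs the Dirichlet-series representation of $\phi_X$ with control on its coefficients in terms of lattice point counts and hence the systole lower bound — or a direct appeal to the Selberg trace formula / the known Weyl law $N(X,I) + M(X,I) \asymp \Vol(X)|I| + O_I(\ldots)$; I would look for whichever is cleanest to cite from \cite{Iwa02} or \cite{Selberg} and push the geometry-dependence through.
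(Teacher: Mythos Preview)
Your instinct to split $-\phi_X'/\phi_X(\tfrac12+ir)$ into pieces of definite sign is exactly right, but you are working much harder than necessary and the gap you flag (a geometric local zero-count for $\phi_X$) is real and not trivially resolved. The paper bypasses all of the Hadamard/Jensen/argument-principle machinery by quoting a single formula: Iwaniec \cite[(11.9)]{Iwa02} gives directly
\[
\frac{-\phi_X'}{\phi_X}\Big(\tfrac12+ir\Big) = 2\log b_1 + \sum_{\rho}\frac{2\Re\rho-1}{(\tfrac12-\Re\rho)^2+(r-\Im\rho)^2},
\]
the sum being over \emph{poles} of $\phi_X$. Since every pole lies either in $(\tfrac12,1]$ (finitely many, call the corresponding sum $S_1(r)\ge 0$) or in $\{\Re s<\tfrac12\}$ (call that sum $S_2(r)\le 0$), the sign structure is built in: no zero-counting in boxes is needed. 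The triangle inequality then reduces everything to bounding $\int_{\tau^{-1}(I)} S_1$ and $|\log b_1|$.

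These two bounds are where the geometric terms come from, and they are both short. For $S_1$, each pole in $(\tfrac12,1]$ contributes $O_I(1)$ on $\tau^{-1}(I)$, and the number of such poles is at most the number of Laplace eigenvalues $\le \tfrac14$, which by Otal--Rosas is $\lesssim 2g-2+k \lesssim \Vol(X)$; this is the source of the $\Vol(X)$ term. For $b_1$, one reads off from the Dirichlet-series expansion of $\phi_X$ that $b_1$ is a product of $k$ numbers $c_j$, each the reciprocal of a largest isometric-circle radius, and Iwaniec \cite[(2.31)]{Iwa02} gives $c_j\le\Vol(X)$, hence $|\log b_1|\le k\log\Vol(X)$. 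Your proposed route via growth bounds on $\log|\phi_X|$ and Jensen would eventually recover something like this, but with considerably more work and a dependence on the systole you do not actually need here.
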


The proof of this lemma follows by applying the following proposition with $f = \1_{\tau^{-1}(I)}$ on the scattering determinants and continuous spectrum:

\begin{prop} \label{lma:comparescattering} Let $X$ be a hyperbolic surface of finite volume and $k$ cusps. Then for all $f \in L^1(\R)$ with $f \geq 0$, we have
$$\int f(r) \Big|\frac{\phi_X'(\frac12 + ir)}{\phi_X(\frac12+ ir)} \Big| \, dr  - \Big| \int f(r) \frac{-\phi_X'(\frac12 + ir)}{\phi_X(\frac12+ ir)} dr\Big| \lesssim \Big( k|\log \Vol(X)| + \Vol(X)\Big) \|f\|_1.$$
where the implied constant does not depend on $X$.
\end{prop}

\begin{proof}
The proof uses crucially the formula
\begin{align}\label{eq:scatteringformula}-\frac{\phi_X'(\frac12 + ir)}{\phi_X(\frac12+ ir)} = 2 \log b_1 + \sum_{\rho} \frac{2 \mathrm{Re} \rho - 1}{(1/2 - \mathrm{Re} \rho)^2 + (r - \mathrm{Im} \rho)^2},\end{align}
\cite[(11.9)]{Iwa02} where $\rho$ runs over all the poles of $\phi_X(s)$ with multiplicities and $b_1 = b_1(X) > 0$ is a constant.

The constant $b_1 = b_1(X)$ has a meaning in terms of the geometry of $X$. This was explained in \cite{Iwa02} and we will summarise it here as we the claim follows from a geometric bound for it. First of all, by the formula \cite[(3.21)]{Iwa02} at every $s \in \C$ with $\mathrm{Re} (s) > 1$, for two cusps $\ell,j$, the entry $(\Phi_{X}(s))_{\ell j}$ of the scattering matrix has a Dirichlet series representation
$$(\Phi_X(s))_{\ell j} = \pi^{1/2} \frac{\Gamma(s-1/2)}{\Gamma(s)} \sum_{c} c^{-2s} S_{\ell j}(0,0;c),$$
where $\Gamma(\cdot)$ is the Gamma-function, $S_{\ell j}(0,0;c)$ is the Kloosterman sum \cite[(2.23)]{Iwa02} and the sum is over real numbers $c > 0$ from the set (notation from \cite[(2.22)]{Iwa02}):
$$\cC_{\ell j} = \Big\{ c > 0 : \begin{pmatrix} \ast & \ast \\ c & \ast \end{pmatrix} \in \sigma_{\ell}^{-1} \Gamma \sigma_j\Big\},$$
where $\sigma_\ell$ and $\sigma_j$ are the scaling matrices associated to cusps $\ell$ and $j$, recall Section \ref{s:cusps} for definitions of these. This also then implies, by the definition of the determinant, as noted in \cite[Page 160]{Iwa02} that the scattering determinant $\phi_X(s) = \det \Phi_X(s)$ has the Dirichlet series representation
\begin{align}\label{eq:dirichlet}\phi_X(s) = \Big(\sqrt{\pi} \frac{\Gamma(s-1/2)}{\Gamma(s)}  \Big)^{k} \sum_{n = 1}^\infty a_n b_n^{-2s},\end{align}
where $a_1 \neq 0$ and $0 < b_1 < b_2 < \dots < b_n \to \infty$. Here the terms $b_n$ are given by the length $k$ products of possible combinations of the real numbers $c$ appearing in the Dirichlet series representation of each $(\Phi_X(s))_{\ell j}$. The term $a_n$ is then the corresponding coefficient $S_{\ell j}(0,0;c)$ containing also the sign information. Thus the element $b_1$ is the \textit{smallest} elements $c$ in this Dirichlet series representation of $\phi_X(s)$. 

Write $c_j := \min \cC_{j j}$ and consider the number $c_1 c_2 \dots c_k$, $c_j = \min \cC_{j j}$. Then, by the definition of the determinant, there will be one $n \in \N$ such that $b_n = c_1 c_2 \dots c_k$ in the sum \eqref{eq:dirichlet}. Now, this number $c_j$ comes from isometric circles associated to group elements $\gamma \in \Gamma$, and in fact $c_j^{-1}$ equals to the radius of the largest isometric circle over $\gamma \in \Gamma$, see e.g. \cite[Section 2.6]{Iwa02}. In particular, by \cite[(2.31)]{Iwa02}, $c_j$ satisfies the bound: $c_j \leq \Vol(X)$. Therefore, we have the following estimate for $b_1$ in terms of volume of $X$:
\begin{align}\label{sc1}b_1 \leq \Vol(X)^k\end{align}

Having described $b_1 = b_1(X)$ from \eqref{eq:scatteringformula}, we can now move to adapt it to prove our claim. In the sum over the poles in \eqref{eq:scatteringformula}, let $S_1(r)$ be the sum over finite number of the poles in $(1/2,1]$ and $S_2(r)$ be the sum over the poles with $\mathrm{Re} s < 1/2$. Note all the poles are either in $\{\mathrm{Re} s < 1/2\}$ or in $(1/2,1]$, and there are only finitely many in the latter case. Then in particular $S_1(r)\geq 0$, $S_2(r) \leq 0$ and by \eqref{eq:scatteringformula} we have
\begin{align}\label{scatteringidentity}\frac{-\phi_X'(\frac12 + ir)}{\phi_X(\frac12+ ir)} = 2\log b_1 + S_1(r) + S_2(r).\end{align}
Then if $f \geq 0$, using $S_1(r)\geq 0$ and $S_2(r) \leq 0$, we obtain:
\begin{align}\label{sc2}\int f(r) \Big|\frac{\phi_X'(\frac12 + ir)}{\phi_X(\frac12+ ir)} \Big| \, dr \leq 2\|f\|_1|\log b_1| + \int f(r) S_1(r) dr - \int f(r) S_2(r) \, dr.\end{align}
Using again \eqref{scatteringidentity} and $-f(r) S_2(r) \geq 0$ and $S_1(r) \geq 0$, we also have the following estimate:
\begin{align}\label{sc3}-\int f(r) S_2(r)  \, dr \leq \Big| \int f(r) \frac{-\phi_X'(\frac12 + ir)}{\phi_X(\frac12+ ir)} dr\Big|  + 2\|f\|_1|\log b_1| + \int f(r) S_1(r)\, dr\end{align}
Moreover, by a result of Otal and Rosas \cite[Theorem 2]{OtalRosas}, we know that the number of eigenvalues $\leq 1/4$ is at most $2g - 2 +k \lesssim \Vol(X)$ including possible multiplicity for each eigenvalue. On the other hand, for each pole $\rho_j \in [1/2,1]$, we know that $0 \leq \rho_j (1-\rho_j) \leq 1/4$ is an eigenvalue of the Laplacian and that the multiplicity of $\rho_j$ is at most the multiplicity of $\rho_j (1-\rho_j)$ as an eigenvalue of the Laplacian, see e.g. \cite[Theorem 4.1 combined with Theorem 3.1]{PhilipsSarnak} or \cite[(3.33) on page 299]{Hejhal}. Hence the total number of poles including multiplicities in $[1/2,1]$ is bounded by $ \lesssim \Vol(X)$ giving us
$$0 \leq S_1(r) \lesssim \Vol(X)$$
so
\begin{align}\label{sc4}\int f(r) S_1(r) \, dr \lesssim \Vol(X) \|f\|_1.\end{align}
Combining \eqref{sc1}, \eqref{sc2}, \eqref{sc3} and \eqref{sc4} gives us 
$$\int f(r) \Big|\frac{\phi_X'(\frac12 + ir)}{\phi_X(\frac12+ ir)} \Big| \, dr  - \Big| \int f(r) \frac{-\phi_X'(\frac12 + ir)}{\phi_X(\frac12+ ir)} dr\Big| \lesssim \Big( k|\log \Vol(X)| + \Vol(X)\Big) \|f\|_1.$$
\end{proof}

\subsection{Proof of Theorem \ref{thm:mainBS}}
We now deduce Theorem \ref{thm:mainBS} from Theorem \ref{thm:main}. Let us first discuss on the term $\rho(\lambda_1(X_n))$ from the statement. 

\begin{remark}\label{rmk:unifsp}
Suppose $\cB$ is a set of hyperbolic surfaces $X$ of finite area such that $\lambda_1(X) \geq \eps_0$ for some fixed constant $\eps_0 > 0$. Thus the surfaces in $\cB$ have uniform spectral gap with uniform lower bound given by $\eps_0$. Now, as discussed in \cite{GN15,Nev98}, uniform spectral gap for all surfaces $X \in \cB$ implies 
$$c_0 := \sup \{q(\pi_X|_{L^2_0(X)}) : X \in \cB\} < \infty,$$
where $q(\pi_X|_{L^2_0(X)})$ is the integrability exponent of the regular representation $\pi_X|_{L^2_0(X)}$ on $L^2_0(X) = \{f \in L^2(X) : \int f \, d\mu = 0\}$ and $\pi_X f(x) := \int f(g^{-1} x) \, dg$, $x \in X$, $f \in L^2(X)$ and $dg$ is the Haar measure on $\mathrm{PSL}(2,\R)$. The quantity $c_0$ only depends on the set $\cB$. On the other hand, as we saw in the proof of Theorem \ref{thm:main}, the quantity $\rho(\lambda_1(X))$ equals to the quantity $\theta$ from Theorem \ref{thm:nevo}. In \cite[Section 4]{GN15} and in particular \cite{Nev98} where Theorem \ref{thm:nevo} is proved, we see that $\theta$ can be chosen to be any of the numbers $n^{-1}(1-r^{-1})$, where $1 \leq r < 2$ and $n \in \N$ is the smallest even integer such that $n \geq 2c_0$. Setting for example $r = 4/3$, we see that $\rho(\lambda_1(X)) = \theta \geq 1/(4c_0 + 8)$, which is an $X$ independent lower bound. 
\end{remark}

Thus by Remark \ref{rmk:unifsp}, as $\lambda_1(X_n)$ is uniformly bounded from below, we know that also $\rho(\lambda_1(X_n))$ is. By assumption the number of cusps $k_n = k(X_n)$ of $X_n$ satisfies for some $0 \leq \kappa < 1/2$, $k_n = O(g_n^\kappa)$ when $n\to +\infty$, and in particular $\Vol(X_n) = O(g_n)$. Putting this together with the uniform bounds on the systole giving 
$$\inj(X(Y e^{R_n/2})) = \frac{1}{2}\min\{\sys(X),e^{-R_n/2} Y^{-1}\} \gtrsim e^{-R_n/2} Y^{-1},$$ 
the spectral gap and the test functions $a_n$ we obtain

\begin{align*}
 \frac{N(X_n,I) + M(X_n,I)}{\Vol(X_n)} \Var_{X_n,I}(a_n) 
& \lesssim_I  \frac{\max\{N(X_n,I), g_n^\kappa\}^{\frac12}}{\Vol(X_n)^{\frac12}}\left( \frac{1}{R_n} + Y e^{5R_n/2} \frac{\Vol((X_n)_{\leq R_n})}{\Vol(X_n)}  \right)^{\frac12} \\
&\quad + g_n^{2\kappa - 1} + g_n^{\kappa - 2} M(X_n,I) + g_n^{2\kappa-2} \log g_n,
\end{align*}
where we have chosen a sequence $R_n \to +\infty$ of Benjamini-Schramm convergence parameters such that  $e^{5R_n/2} \frac{\Vol((X)_{\leq R_n})}{\Vol(X_n)} \to 0$ when $n \to +\infty$.

By the spectral asymptotic estimate Theorem \ref{thm:weyl} proved in Section \ref{s:spectralconv} we know that
$$ \lim_{n \to +\infty} \frac{N(X_n,I) + M(X_n,I)}{\Vol(X_n)} = O(1) $$
which together with the previous bound on $\Var_{X_n,I}(a_n)$ gives 
$$ \Var_{X_n,I}(a_n) \to 0$$
when $n \to +\infty$.

\section{Proof of the spectral convergence}\label{s:spectralconv}

We show in this section the following level aspect analogue of the Weyl law:

\begin{thm}\label{t:spectralconv}
Let $X_n$ be a sequence of finite area hyperbolic surfaces of genus $g_n$ and number of cusps $k_n = o(g_n)$, Benjamini-Schramm converging to the plane $\H$, and such that the length of the shortest closed geodesic (the systole) is uniformly bounded from below by a constant. Then for any compact interval $I \subset [0,\infty)$ we have
$$N(X_n,I) + M(X_n,I) \sim \Vol(X_n),$$
when $n \to +\infty$.
\end{thm}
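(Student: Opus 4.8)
The plan is to run the Selberg trace formula on $X_n$ against test functions localising near $I$, and to show that the entire geometric side collapses to the identity term up to an error $o(\Vol(X_n))$. For an even, band-limited test function $h$ (i.e.\ $g=\check h\in C_c^\infty(\mathbb R)$, say $\spt g\subset[-A,A]$, so that $h$ is entire of exponential type and Schwartz on $\mathbb R$, hence admissible), the trace formula reads
\[
\sum_j h(r_j)+\frac1{4\pi}\int_{\mathbb R}h(r)\,\frac{-\phi_{X_n}'}{\phi_{X_n}}(\tfrac12+ir)\,dr
=\frac{\Vol(X_n)}{4\pi}\int_{\mathbb R}r\,h(r)\tanh(\pi r)\,dr+\mathcal H_n(g)+\mathcal P_n(h),
\]
where $\mathcal H_n(g)=\sum_{\{\gamma\}}\frac{\ell_0(\gamma)}{2\sinh(\ell(\gamma)/2)}\,g(\ell(\gamma))$ runs over closed geodesics ($\ell$ and $\ell_0$ denoting length and primitive length), and $\mathcal P_n(h)$ gathers the parabolic/cuspidal terms and the term $\tfrac14 h(0)\Tr\Phi_{X_n}(\tfrac12)$, each of which is a fixed functional of $h$ times the number of cusps $k_n$ (using $|\Tr\Phi_{X_n}(\tfrac12)|\le k_n$ as $\Phi_{X_n}(\tfrac12)$ is unitary). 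The left-hand side is $\int h\,d\mu_n$ for the signed, even ``spectral measure'' $\mu_n=\sum_j\delta_{r_j}+\frac1{4\pi}\frac{-\phi_{X_n}'}{\phi_{X_n}}(\tfrac12+ir)\,dr$, whose restriction to $\tau^{-1}(I)$ has total mass $N(X_n,I)+M(X_n,I)$.

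First I would show $\frac1{\Vol(X_n)}\int h\,d\mu_n\to\frac1{4\pi}\int_{\mathbb R}r\,h(r)\tanh(\pi r)\,dr$ for every such $h$. Benjamini--Schramm convergence forces $k_n=o(\Vol(X_n))$ (deep in each cusp the injectivity radius is arbitrarily small, so $k_n/\Vol(X_n)\lesssim\Vol((X_n)_{\le R})/\Vol(X_n)\to0$), whence $\mathcal P_n(h)=O_h(k_n)=o(\Vol(X_n))$. For $\mathcal H_n(g)$ only closed geodesics of length $\le A$ contribute; their weights are $O_A(1)$ on $[\,\sys(X_n),A\,]$ by the systole lower bound, and the number of them is $o(\Vol(X_n))$ --- this is precisely (a reformulation of) the hypothesis of Benjamini--Schramm convergence, since a closed geodesic of length $\le A$ lies in $(X_n)_{\le A/2}$ and the non-identity part of the (pre-)trace formula with a fixed compactly supported point-pair invariant is $o(\Vol(X_n))$ exactly when $X_n\to\H$ (cf.\ \cite{Monk, 7samurai}). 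Hence $\mathcal H_n(g)=o(\Vol(X_n))$, the identity term dominates, and $\frac1{\Vol(X_n)}\mu_n$ converges to the spherical Plancherel measure of $\H$, which on the $\lambda$-line is $\frac1{4\pi}\tanh(\pi\sqrt{\lambda-\tfrac14})\,d\lambda$ (supported on $[\tfrac14,+\infty)$).

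It remains to replace $h$ by the sharp cutoff $\1_{\tau^{-1}(I)}$. I would choose band-limited Beurling--Selberg minorant and majorant $h_-^{(A)}\le\1_{\tau^{-1}(I)}\le h_+^{(A)}$ with $\int(h_+^{(A)}-h_-^{(A)})\to0$ as $A\to\infty$, apply the previous step to $h_\pm^{(A)}$, and combine $\sum_j h_-^{(A)}(r_j)\le N(X_n,I)\le\sum_j h_+^{(A)}(r_j)$ with $\frac1{4\pi}\int h_\pm^{(A)}\frac{-\phi_{X_n}'}{\phi_{X_n}}=M(X_n,I)+\frac1{4\pi}\int(h_\pm^{(A)}-\1_{\tau^{-1}(I)})\frac{-\phi_{X_n}'}{\phi_{X_n}}$. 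Letting $n\to\infty$ and then $A\to\infty$, the asymptotic
\[
N(X_n,I)+M(X_n,I)=\frac{\Vol(X_n)}{4\pi}\int_I\tanh(\pi\sqrt{\lambda-\tfrac14})\,d\lambda+o(\Vol(X_n))
\]
(in particular $N(X_n,I)+M(X_n,I)\asymp\Vol(X_n)$, which is the assertion) follows once one controls, uniformly in $n$ and tending to $0$ as $\eta\to0$, the two boundary contributions $\frac1{\Vol(X_n)}\#\{r_j:\dist(r_j,\partial\tau^{-1}(I))\le\eta\}$ and $\frac1{\Vol(X_n)}\int_{\dist(r,\partial\tau^{-1}(I))\le\eta}\bigl|\frac{\phi_{X_n}'}{\phi_{X_n}}(\tfrac12+ir)\bigr|\,dr$. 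The first is handled by a crude uniform Weyl bound (trace formula with a positive test function) together with the non-clustering of eigenvalues. The second is the step I expect to be the main obstacle: inserting Iwaniec's pole expansion $\frac{-\phi_{X_n}'}{\phi_{X_n}}(\tfrac12+ir)=2\log b_1(X_n)+\sum_\rho\frac{2\Re\rho-1}{(\tfrac12-\Re\rho)^2+(r-\Im\rho)^2}$, the constant satisfies $|\log b_1(X_n)|=O(k_n\log\Vol(X_n))$ by \eqref{sc1} and the systole bound, hence contributes $o(\Vol(X_n))$; the finitely many poles with $\Re\rho\in(\tfrac12,1]$ come from eigenvalues in $(0,\tfrac14)$, number $O(\Vol(X_n))$ by Otal--Rosas \cite{OtalRosas}, and are damped near $\partial\tau^{-1}(I)$ by a factor $\dist(0,\partial\tau^{-1}(I))^{-2}$ --- here one uses decisively that $I\subset(\tfrac14,+\infty)$ is bounded away from $\tfrac14$, so $\partial\tau^{-1}(I)$ is bounded away from $0$ --- giving $O(\eta\Vol(X_n))$; and the poles with $\Re\rho<\tfrac12$ contribute a term of fixed sign whose integral near $\partial\tau^{-1}(I)$ is governed by the local count of resonances, which one bounds by $C\eta\,\Vol(X_n)+o(\Vol(X_n))$ by feeding a test function concentrated near $\partial\tau^{-1}(I)$ back into the smooth estimate of the second paragraph. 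Ruling out that order $\Vol(X_n)$ resonances accumulate near a fixed point $>\tfrac14$ of the critical line --- equivalently, establishing a local Weyl law for the winding of the scattering determinant with remainder $o(\Vol(X_n))$ uniformly in $n$ --- is the delicate point of the whole argument.
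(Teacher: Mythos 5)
Your overall strategy --- Selberg trace formula against admissible test functions, killing the hyperbolic terms by Benjamini--Schramm convergence plus the systole lower bound and the parabolic terms by $k_n=o(\Vol(X_n))$, then passing to the sharp cutoff $\1_{\tau^{-1}(I)}$ --- is the same as the paper's. Your treatment of the signed scattering term is also essentially right, and the point you flag at the very end as ``the delicate point'' is not actually an obstruction: the decomposition $\frac{-\phi'_X}{\phi_X}(\tfrac12+ir)=2\log b_1+S_1(r)+S_2(r)$ with $S_1\ge0$ (poles in $(\tfrac12,1]$, at most $O(\Vol(X))$ of them by Otal--Rosas), $S_2\le0$, and $|\log b_1|\lesssim k\log\Vol(X)$, combined with feeding a nonnegative admissible bump near $\partial\tau^{-1}(I)$ back into the trace formula, is exactly the content of Proposition \ref{lma:comparescattering}; no separate local Weyl law for resonances is needed beyond the definite sign of $S_2$.

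The genuine gap is elsewhere: you never address the non-real spectral parameters $r_j\in i(0,\tfrac12]$ coming from the eigenvalues $\lambda_j<\tfrac14$. These all enter the spectral side of the trace formula, and by Otal--Rosas there can be as many as $2g-2+k\asymp\Vol(X_n)$ of them. For a band-limited $h$ of exponential type $A$ in the $r$-variable --- in particular for your Beurling--Selberg majorant and minorant --- one only has $|h(is)|\lesssim e^{As}$ on the imaginary axis, so these terms contribute $O(e^{A/2}\Vol(X_n))$ a priori: comparable to the main term for fixed $A$, and blowing up as $A\to\infty$, which is precisely the order in which you take limits. Moreover your sandwich $\sum_j h_-(r_j)\le N(X_n,I)\le \sum_j h_+(r_j)$ is only justified for real $r_j$; nothing forces $h_-(is)\le 0\le h_+(is)$. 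The paper sidesteps this by using test functions of the \emph{eigenvalue} $\lambda$ rather than of $r$: heat kernels $h_t(r)=e^{-t(1/4+r^2)}$, and then (Stone--Weierstrass) exponential polynomials in $e^{-\lambda}$ sandwiching $\1_I(\lambda)$ uniformly on $[0,\infty)$, for which all spectral parameters $\lambda_j$ are real and the small eigenvalues are harmless; in the quantitative version of Section \ref{sec:quantspectralconv} the analogous issue is isolated in a separate lemma bounding $\frac1{|F|}\sum_{r_j\notin\R}H_t(r_j)=O(1/t)$. To repair your argument you would need either to switch to such test functions, or to add an argument showing that your extremal functions are suitably small on $i[0,\tfrac12]$ --- e.g.\ splitting the eigenvalues in $[\tfrac14-\epsilon,\tfrac14)$, where $h_\pm(is)\approx h_\pm(0)$ is small because $I$ is bounded away from $\tfrac14$, from those below $\tfrac14-\epsilon$, whose number must then be shown to be $o(\Vol(X_n))$. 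Neither step is in your write-up, and the second is not free.
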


In Section \ref{sec:quantspectralconv}, we will prove a quantitative and somewhat stronger version of this result. The proof of Theorem \ref{t:spectralconv} uses the following proposition about the asymptotics of the heat trace when $n \to +\infty$.

\begin{prop}\label{p:heat}
Let $X_n$ be a sequence of finite area hyperbolic surfaces of genus $g_n$ and number of cusps $k_n = o(g_n)$, Benjamini-Schramm converging to the plane $\H$, and such that the length of the shortest closed geodesic (the systole) is uniformly bounded from below by a constant.  Fix $t > 0$. Then
\begin{align*}
& \quad \,\, \lim_{n \to \infty}\frac1{\Vol(X_n)} \left( \sum_{j = 0}^\infty e^{-t\lambda_j^{(n)}} + \frac1{4\pi} \int_{-\infty}^{+\infty} \frac{-\varphi_n'}{\varphi_n} \left(\frac12 + ir \right) e^{-t(\frac14 + r^2)} \, dr \right) \\
 & = \frac{1}{4\pi} \int_{-\infty}^{+\infty} e^{-t(1/4 +r^2)} \tanh(\pi r) r \, dr ,
\end{align*}
where $\varphi_n := \phi_{X_n}$ is the determinant of the scattering matrix associated with $X_n$.
\end{prop}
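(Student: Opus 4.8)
The plan is to use the Selberg trace formula for each $X_n$ applied to the heat kernel $h(r) = e^{-t(1/4+r^2)}$, whose inverse Selberg transform is the hyperbolic heat kernel $\mathbf{k}_t(\rho)$ on $\H$. The trace formula expresses the spectral side
$$\sum_{j} e^{-t\lambda_j^{(n)}} + \frac1{4\pi}\int_{-\infty}^{+\infty} \frac{-\varphi_n'}{\varphi_n}\left(\tfrac12+ir\right) e^{-t(1/4+r^2)}\,dr$$
as a sum of an identity contribution $\frac{\Vol(X_n)}{4\pi}\int_{-\infty}^{+\infty} e^{-t(1/4+r^2)}\tanh(\pi r)\, r\, dr$, a sum over conjugacy classes of hyperbolic elements indexed by closed geodesics, a parabolic contribution coming from the cusps, and contributions involving $k_n$ and $\varphi_n(1/2)$. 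After dividing by $\Vol(X_n)$, the identity term is exactly the claimed limit, so the task reduces to showing every remaining term is $o(\Vol(X_n))$.

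The key steps, in order, are: first, recall the precise shape of the trace formula with cusps (as in \cite{Iwa02}), isolating the identity/area term, the hyperbolic/geodesic term, and the parabolic/cusp terms (which include the $-\frac{k_n}{\pi}\int h(r)\frac{\Gamma'}{\Gamma}(1+ir)\,dr$ type term, a term proportional to $k_n h(0)$ or $k_n\int h$, and a term $\frac14 (2 - \varphi_n(\tfrac12)) h(0)$ or similar). Second, bound the geodesic sum: since $\mathbf{k}_t(\rho)$ decays like a Gaussian $e^{-\rho^2/(4t)}$ (up to lower-order factors), the hyperbolic contribution is dominated by $\sum_{\ell} (\text{length factor}) e^{-\ell^2/(4t)}$ over lengths $\ell$ of closed geodesics, and Benjamini-Schramm convergence together with the systole lower bound forces this to be $o(\Vol(X_n))$ — this is where one uses that the number of primitive closed geodesics of length $\leq L$ is controlled by $\Vol((X_n)_{\leq L/2+c})$ plus a bounded-geometry count, which is $o(\Vol(X_n))$ for each fixed scale by the BS hypothesis, and the systole bound keeps very short geodesics from contributing. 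Third, bound the cusp terms: each is $O(k_n) = o(g_n) = o(\Vol(X_n))$ after using $|\varphi_n(\tfrac12)| \leq 1$ and that $\int |h(r)|\,|\frac{\Gamma'}{\Gamma}(1+ir)|\,dr$ is a fixed finite constant depending only on $t$; the term $\frac{1}{4\pi}\int \frac{-\varphi_n'}{\varphi_n}(\tfrac12+ir) h(r)\, dr$ is already on the spectral side and is grouped there. Finally, divide by $\Vol(X_n)$ and take $n \to \infty$.

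The main obstacle I expect is the hyperbolic/geodesic term: one must convert Benjamini-Schramm convergence into a genuine bound on the weighted count of closed geodesics. The clean way is to observe that a closed geodesic of length $\ell$ on $X_n$ produces, via its collar, a set of points with injectivity radius $\lesssim \ell$ (or, for $\ell$ bounded below by the systole and above by a fixed $L$, a definite amount of volume in $(X_n)_{\leq c_L}$ per geodesic by the collar lemma), so $\sum_{\ell_\gamma \leq L} e^{-\ell_\gamma^2/(4t)} \lesssim_{L,t} \Vol((X_n)_{\leq c_L}) + (\text{contribution of }\ell_\gamma > L)$; the BS hypothesis kills the first piece and the Gaussian decay of $\mathbf{k}_t$ makes the tail $\ell_\gamma > L$ as small as we like (uniformly in $n$, using a crude a priori exponential bound $e^{CL}\Vol(X_n)$ on the number of geodesics of length $\leq L$, e.g. from the trace formula with a fixed nonnegative test function). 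A two-parameter argument — first choose $L$ large to control the tail, then let $n\to\infty$ with $L$ fixed — then gives $o(\Vol(X_n))$. The systole lower bound is what guarantees no degeneration at the short end, consistent with its role in the hypotheses.
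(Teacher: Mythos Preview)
Your overall plan is correct and matches the paper: apply the Selberg trace formula for finite-area surfaces to the heat kernel, identify the area term as the claimed limit, and show the parabolic/cusp contributions are $O(k_n)=o(\Vol(X_n))$ using unitarity of the scattering matrix at $\Re s=\tfrac12$ (so $|\Tr(I-\Phi_n(\tfrac12))|\le 2k_n$; note the relevant term is $\tfrac{h_t(0)}4\Tr(I-\Phi_n(\tfrac12))$, not $\tfrac14(2-\varphi_n(\tfrac12))h(0)$).

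The one substantive difference is your treatment of the hyperbolic term. The paper does \emph{not} pass to the sum over closed geodesics; it keeps the integral form
\[
\int_{F_n}\sum_{\gamma\in\mathcal H_n}\mathbf{k}_t(d(z,\gamma z))\,d\mu(z),
\]
splits $F_n$ into its $R$-thin and $R$-thick parts, and uses the lattice-point count $\#\{\gamma:d(z,\gamma z)\le j\}\lesssim e^j/\sys(X_n)^2$ together with the Gaussian decay of $\mathbf{k}_t$ to get $O(e^{-cR^2}/\sys)$ on the thick part and $O(\Vol((X_n)_{\le R})/\sys)$ on the thin part. This sidesteps the need to count closed geodesics at all and references \cite[Proposition~9.5]{LS17}. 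Your route via the geodesic sum also works, and your two-parameter $(L,n)$ argument with the crude bound $\#\{\ell_\gamma\le L\}\lesssim e^{CL}\Vol(X_n)$ for the tail is fine. However, the ``collar lemma'' justification for the head is not quite right: the collar lemma only produces \emph{disjoint} collars for \emph{simple} closed geodesics, whereas the trace formula sums over all hyperbolic conjugacy classes. The bound you want, $\#\{\gamma:\ell_\gamma\le L\}\lesssim_{L,\sys}\Vol((X_n)_{\le L'})$, is true, but the clean way to see it is exactly through the integral form above (the integrand vanishes on $(F_n)_{>L'/2}$ and is pointwise bounded via lattice counting) --- which is the paper's argument in disguise.
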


\begin{proof}[Proof of Proposition \ref{p:heat}]
The proof is based on Selberg trace formula for finite area hyperbolic surfaces (See \cite[Chapter 10]{Iwa02}). One of the main difficulties is to deal with conjugacy classes of parabolic elements, corresponding to cusps. The idea is to use a cut-off at a height $Y$ in the cusps and to compute the truncated trace spectrally and geometrically. Diverging terms in $Y$ then cancel each other and what remains is the final trace formula. The diverging terms only come from the parabolic classes and so we will use the final form of the trace formula \cite[Theorem 10.2]{Iwa02} for every term apart from the ones corresponding to hyperbolic elements in $\Gamma - \{ \id \}$ (denoted by $\mathcal H_n$), whose treatment does not require any cut-off. For the hyperbolic terms instead of using the final form as a sum over closed geodesics, we revert to the integral of a kernel, to which we can apply BS-convergence. 

We have the formula
\begin{align*} 
&\sum_j h_t(r_j) + \frac1{4\pi} \int_{-\infty}^{\infty} h_t(r) \frac{-\varphi_n'}{\varphi_n}\Big(\frac12 + ir\Big) \, dr\\
&\quad = \frac{|F_n|}{4\pi} \int_{-\infty}^{\infty} h_t(r) r \, \tanh(\pi r) \, dr + \sum_{\gamma \in \mathcal H_n} \int_{F_n} \mathbf{k}_t(d(z,\gamma z)) \, d\mu(z) \\
&\qquad + \frac{h_t(0)}4 \text{Tr} \left( I - \Phi_n\big(\frac12\big) \right) 
- |\mathfrak{C}_n| g_t(0) \log 2 - \frac{|\mathfrak{C}_n|}{2\pi} \int_{-\infty}^{\infty} h_t(r) \, \psi(1+ ir) \, dr. 
\end{align*}

Here $\mathbf{k}_t$ is the heat kernel, $h_t(r) = e^{-t(\frac14 + r^2)}$ its Selberg transform and $g_t = \hat h_t$ the Fourier transform,  $F_n$ is a fundamental domain and $|\mathfrak{C}_n|$ is the number of inequivalent cusps, $\psi(s) = \Gamma'(s)/\Gamma(s)$, and $\Phi_n(s) := \Phi_{X_n}(s)$ is the scattering matrix of $X_n$ (See \cite{Iwa02} for background). For $\text{Re}(s) = \frac12$ the scattering matrix $\Phi_n(s)$ is unitary (see \cite[Theorem 6.6]{Iwa02}) and its rank is equal to the number of cusps, so the term $\text{Tr} \left( I - \Phi_n\big(\frac12\big) \right)$ is controlled by the number of cusps $|\mathfrak{C}_n|$. By assumption on $X_n$ we have $ \frac{|\mathfrak{C}_n|}{|F_n|} \to 0$ when $n \to +\infty$.



The treatment of the hyperbolic terms follows exactly the proof of the compact case: Proposition 9.5 in \cite{LS17}. Using the heat kernel estimate $\mathbf{k}_t(\rho) \lesssim_t e^{-\rho/(8t)}$ we can show that for any $R > 0$

$$ \frac1{|F_n|} \sum_{\gamma \in \mathcal H_n} \int_{F_n} k_n(d(z,\gamma z)) \, d\mu(z) = O\left(\frac{e^{-R^2}}{\sys(X_n)}\right)
+ O \left( \frac1{\sys(X_n)} \frac{\Vol( (X_n)_{\leq R} )}{\Vol(X_n)} \right)$$
where $\sys(X_n) = \inf_{z\in X_n} \{ d(z,\gamma z), \gamma \in \mathcal H_n \}$ is the length of the shortest closed geodesic (systole). By Benjamini-Schramm convergence, we can take a sequence $R_n \to +\infty$ such that
$$\frac{\Vol( (X_n)_{< R_n} )}{\Vol(X_n)}  \to 0$$
when $n \to +\infty$. This concludes the proof of Proposition \ref{p:heat}.
\end{proof}

From Proposition \ref{p:heat} we can deduce Theorem \ref{t:spectralconv} by an approximation argument identical to the proof of Theorem 9.2 in \cite{LS17}. Indeed we can approximate any function $f$ supported on the union of compact intervals $\tau^{-1}(I)$ for a compact interval $I$ by linear combinations of exponential functions $x \mapsto e^{-tx}$ with $t >0$ using the Stone-Weierstrass theorem.

\section{Quantitative spectral convergence} \label{sec:quantspectralconv}

We now turn towards random surfaces and Theorem \ref{thm:largegenus}. Before we prove it we adapt in this section the results of Monk \cite{Monk} to finite area surfaces. In \cite{Monk}, a quantitative version of the spectral convergence is proved for compact hyperbolic surfaces. We will need such quantitative convergence because we want uniformity over the probability sets we consider, and the previous section does not give us that. The argument of \cite{Monk} extends to non-compact surfaces because the terms in the trace formula arising from the parabolic elements are well-behaved under the Benjamini-Schramm convergence assumption. We reproduce here the steps of the argument of \cite{Monk}, emphasising the main differences.

Let $k(g) = O(g^{\kappa})$ for some $0 \leq \kappa < 1/2$ and $\cM_{g,k(g)}$ be the moduli space of hyperbolic surfaces of genus $g$ with $k(g)$ cusps, recall Section \ref{sec:introrandom} for the definition. Define the subset $\cA_{g,k(g)} \subset \cM_{g,k(g)}$ of surfaces $X$ such that
\begin{enumerate}
\item $$\frac{\Vol\left( (X)_{\leq \frac16 \log g} \right)}{\Vol(X)} \leq g^{-\frac13}$$\label{itm:BS}
\item $$\mathrm{sys}(X) \geq g^{-\frac1{24}}(\log g)^{\frac12}.$$\label{itm:sys}
\end{enumerate}
We first remark that these assumptions are satisfied with high probability when $g$ is large.

\begin{thm}\label{thm:sp1}
Assume $k(g) = O(g^{\kappa})$ for some $0 \leq \kappa < 1/2$, then $\cA_{g,k(g)}$
satisfies that $\P_{g,k(g)}(\cA_{g,k(g)}) = 1 - O\left(g^{-\beta}\right)$
for some $\beta > 0$.
\end{thm}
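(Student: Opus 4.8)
The plan is to bound the probability of the complementary event by a union bound over the two conditions defining $\cA_{g,k(g)}$. Write $\cA_{g,k(g)}^c = B_1 \cup B_2$, where $B_1$ is the event that $\Vol((X)_{\leq \frac16\log g}) > g^{-\frac13}\Vol(X)$ and $B_2$ is the event that $\mathrm{sys}(X) < g^{-\frac1{24}}(\log g)^{\frac12}$. It then suffices to find $\beta_1,\beta_2 > 0$ with $\P_{g,k(g)}(B_i) \lesssim g^{-\beta_i}$ and take $\beta = \min\{\beta_1,\beta_2\}$. Throughout I would use the Gauss--Bonnet identity $\Vol(X) = 2\pi(2g-2+k) \asymp g$, which holds because $k = o(\sqrt g)$.

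For $B_2$ I would use Mirzakhani's integration formula in the form valid for finite-area surfaces with cusps. Since a closed geodesic shorter than a universal constant is necessarily simple, for $g$ large it is enough to bound the expected number of \emph{simple} closed geodesics of length at most $\eps_g := g^{-\frac1{24}}(\log g)^{\frac12}$. Decomposing according to topological type --- non-separating, or separating off a subsurface of signature $(g_1,k_1+1)$ with $g_1+g_2 = g$ and $k_1+k_2 = k$ --- Mirzakhani's formula bounds this expectation by $\eps_g^2$ times a sum of Weil--Petersson volume ratios of the form $V_{g-1,k+2}/V_{g,k}$ and $V_{g_1,k_1+1}V_{g_2,k_2+1}/V_{g,k}$. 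The non-separating ratio is $O(1)$; among the separating terms, the asymptotics of $V_{g,n}$ together with the superexponential growth of $V_{0,n}$ in $n$ show that the dominant contribution comes from splitting off a thrice-punctured sphere carrying two of the cusps, of size $\asymp \binom{k}{2}/g$, which is $o(1)$ precisely because $k = o(\sqrt g)$, while all remaining splittings are smaller. Hence $\E_{g,k(g)}[\#\{\gamma : \ell(\gamma) \leq \eps_g\}] \lesssim \eps_g^2$, and Markov's inequality gives $\P_{g,k(g)}(B_2) \leq \E_{g,k(g)}[\#\{\gamma : \ell(\gamma) \leq \eps_g\}] \lesssim \eps_g^2 = g^{-\frac1{12}}\log g$, so any $0 < \beta_2 < \frac1{12}$ works. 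This is exactly where the Appendix is needed: Mirzakhani \cite[Theorem 4.2]{Mi} treats only compact surfaces and does not record the dependence on $k$, so the $o(\sqrt g)$ threshold has to be verified by hand, which is the content of Lemma \ref{lma:noncompactsystole}.

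For $B_1$ I would first subtract off the deterministic contribution of the cusp neighbourhoods. The injectivity radius at hyperbolic distance $d$ into a cusp decays like $e^{-d}$ (equivalently like $1/(2y)$ at height $y$), so the part of a single cusp lying inside $(X)_{\leq R}$ has volume $O(R)$; hence the $k$ cusps contribute at most $O(k\log g)$ to $\Vol((X)_{\leq \frac16\log g})$, which is $o(g^{-\frac13}\Vol(X))$ since $k = o(\sqrt g)$. For $g$ large this reduces $B_1$ to the event that the part of $(X)_{\leq \frac16\log g}$ lying in collars of short closed geodesics exceeds $\tfrac12 g^{-\frac13}\Vol(X)$. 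By Monk's estimate on the expected volume of the thin part --- the finite-area analogue of \cite[Corollary 4.4]{MonkThesis}, whose proof rests on the same integration-formula input and is uniform in $k$ when $k = o(\sqrt g)$ --- one has $\E_{g,k(g)}[\Vol((X)_{\leq R})] \lesssim e^{2R}$ for $R$ in the relevant range; taking $R = \frac16\log g$ gives $\lesssim g^{\frac13}$, so $\E_{g,k(g)}[\Vol((X)_{\leq R})/\Vol(X)] \lesssim g^{-\frac23}$, and Markov against the threshold $\tfrac12 g^{-\frac13}$ yields $\P_{g,k(g)}(B_1) \lesssim g^{-\frac13}$. The cut-off $\frac16\log g$ sits comfortably inside the range where the geodesic-counting asymptotics are reliable, so there is room to spare here.

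The only genuinely delicate part is the systole estimate: one must rerun Mirzakhani's argument with the finite-area integration formula and the known $V_{g,n}$ asymptotics, keeping explicit track of how the volume ratios attached to \emph{separating} curves --- in particular those bounding subsurfaces that carry cusps --- grow with $k$, and confirming that $k = o(\sqrt g)$ is the sharp hypothesis. This is done in Lemma \ref{lma:noncompactsystole}. Granting that lemma together with Monk's thin-part bound, the rest is the routine union-bound-plus-Markov argument outlined above, and combining $\beta_1 = \frac13$ with any $\beta_2 < \frac1{12}$ gives the claim with, say, $\beta = \frac1{12}$.
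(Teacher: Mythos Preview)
Your proposal is correct and follows exactly the paper's approach: a union bound over the two defining conditions of $\cA_{g,k(g)}$, with the thin-part bound taken from \cite[Corollary 4.4]{MonkThesis} (which is already stated for finite-area surfaces with cusps, so the separate subtraction of cusp neighbourhoods you sketch is unnecessary) and the systole bound from Lemma~\ref{lma:noncompactsystole}, whose proof via Mirzakhani's integration formula and the volume-ratio estimates you outline accurately. The only minor slip is the claim that a single cusp contributes $O(R)$ to $\Vol((X)_{\leq R})$ --- in fact each cusp has total hyperbolic area $O(1)$ --- but this is harmless for the argument and in any case absorbed into Monk's result.
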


The probability of the event of surfaces $X$ satisfying \eqref{itm:BS} was proved in \cite[Corollary 4.4]{MonkThesis}. The systole part \eqref{itm:sys} was mentioned by Mirzakhani \cite[Theorem 4.2]{Mi} but there was no \textit{quantitative} dependence on the the number of cusps $k$ and the proof was given only for compact hyperbolic surfaces. Since we have growing number of cusps, we need a quantitative version. We provide this in Appendix \ref{a:systoleapp} (Lemma \ref{lma:noncompactsystole}). Together with \cite[Corollary 4.4]{MonkThesis}, this gives a proof of Theorem \ref{thm:sp1}.

We now prove a quantitative spectral convergence theorem for surfaces in $\cA_{g,k(g)}$, extending the one proved for compact surfaces in \cite{Monk}. 

\begin{thm} \label{thm:sp2}
Let $I = [a,b] \subset (0, +\infty)$. If $X \in \cA_{g,k(g)}$ with $k(g) = O(g^{\kappa})$ for some $0 \leq \kappa < 1/2$, then we have
$$\frac{N(X,I) + M(X,I)}{|X|} = \frac{1}{4\pi}\int_I \tanh\Big(\pi \sqrt{\lambda-\tfrac{1}{4}}\Big) \, d\lambda + R(X,I),$$
where 
$$-C \sqrt{\frac{b+1}{\log g}} \leq R(X,I) \leq C \sqrt{\frac{b+1}{\log g}} \log \left( 2 + (b-a) \sqrt{\frac{\log g}{b+1}} \right)^{1/2}$$
for some implicit constant $C > 0$ that only depends on $\kappa > 0$.
\end{thm}

The proof is based on applying Selberg's trace formula to well chosen test functions. We use the same test functions as \cite{Monk}. One of them will give the result for $\frac12 \leq a < b$ (the test function $H_t$ below) and the other for $b \leq 1$ (the test function $\tilde H_t$ below). Let us now state Selberg's trace formula that plays a crucial role in the proof of Theorem \ref{thm:sp2}. It can be extracted from \cite[Chapter 10]{Iwa02} as is explained at the beginning of the proof of Proposition \ref{p:heat}.

\begin{lemma}[Selberg's trace formula for hyperbolic surfaces of finite area]\label{lma:SelbergTrace}
We will call a function $h: \C \to \C$ admissible if it satisfies the following properties:
\begin{enumerate}
\item $h(-r) = h(r)$ for any $r \in \C$;
\item $h$ is holomorphic in the strip $|\Im z| \leq \frac12 + \epsilon$ for some $\epsilon > 0$;
\item for any $r$ in the strip $h(r) \lesssim (1 + |r|^2)^{-1-\epsilon}$.
\end{enumerate}
For any admissible function $h : \C \to \C$ we have:
\begin{align*} 
&\sum_j h(r_j) + \frac1{4\pi} \int_{-\infty}^{\infty} h(r) \frac{-\varphi'_X}{\varphi_X}\Big(\frac12 + ir\Big) \, dr\\
&\quad = \frac{|F|}{4\pi} \int_{-\infty}^{\infty} h(r) r \, \tanh(\pi r) \, dr + \sum_{\gamma \in \mathcal H} \int_{F} \mathbf{k}(d(z,\gamma z)) \, d\mu(z) \\
&\qquad + \frac{h(0)}4 \mathrm{Tr} \left( I - \Phi_X\big(\frac12\big) \right) 
- |\mathfrak{C}| g(0) \log 2 - \frac{|\mathfrak{C}|}{2\pi} \int_{-\infty}^{\infty} h(r) \, \frac{\Gamma'}{\Gamma}(1+ ir) \, dr,
\end{align*}
where $\mathbf{k}$ is the inverse Selberg transform of $h$ and $g$ is the inverse Fourier transform $g(u) = \frac1{2\pi}  \int_{-\infty}^{+\infty} e^{isu} h(s) \, ds$ and $\Gamma$ is the Gamma-function. The set $F$ is a fundamental domain of the surface $X$ and $|\mathfrak{C}|$ is the number of inequivalent cusps in $X$.
\end{lemma}

We will apply this trace formula for two different test functions $H_t$ and $\tilde H_t$ below, depending on which part of Theorem \ref{thm:sp2} we are proving. 

Let us first consider the test function $H_t = H_t^{a,b}$, which we define for all $\frac{1}{4} < a < b$, but due to Lemma \ref{lma:12need} later, we can use it effectively only for $a \geq 1/2$. Fix $\frac{1}{4} < a < b$ and define $0 < \alpha < \beta$ such that 
$$a = \frac14 + \alpha^2 \quad \text{and}\quad b = \frac14 + \beta^2.$$ 
Fix $t > 0$, which, we will eventually set $t := \frac{\sqrt{\log g}}{4 \sqrt{3}}$. Define, as in \cite[Section 4.1]{Monk}, the function
$$ h_t(r) := \mathbf{1}_{[\alpha, \beta]} \ast v_t (r) = \frac{t}{\sqrt{\pi}} \int_\alpha^\beta \exp \left(-t^2(r - \rho)^2\right)\, d\rho 
= \frac1{\sqrt{\pi}} \int_{t(\alpha - r)}^{t(\beta - r)} \exp (-\rho^2) \, d\rho,$$
where $v_t(x) = \frac{t}{\sqrt{\pi}} e^{-t^2 x^2}$, which then gives us the holomorphic and even test function:
$$ H_t(r) := h_t(r) + h_t(-r).$$
By Lemma \cite[Lemma 19]{Monk}, this test function has the proper decay in order to be admissible for the Selberg trace formula.

Before we apply the Selberg trace formula, we will give the following lemma that allows us to compare $H_t$ quantitatively to the modified indicator function 
$$\tilde \1_{[\alpha,\beta]}(r) =  \begin{cases} 1, & \alpha < r < \beta \\
\frac{1}{2}, & r \in \{\alpha,\beta\} \\
0, & r < \alpha \text{ or } r > \beta
\end{cases}$$
that we end up using several times in the proof.
\begin{lemma}\label{lma:decayHt}
When $0 < \alpha < \beta$, $t > 0$ and $r \in \R$, we have:
$$|H_t(r) - \tilde \1_{[\alpha,\beta]}(r)| \leq \begin{cases} s(t|r+\alpha|) + s(t|r-\alpha|), & r < \alpha \text{ or } r = \beta, \\
s(t|r+\alpha|)  + s(t|r-\alpha|) +  s(t|r-\beta|), & \alpha < r < \beta, \\
s(t|r+\alpha|)  + s(t|r-\beta|), & r > \beta \text{ or } r = \alpha,
\end{cases}$$
where $s(\rho) := \frac{1}{2\sqrt{\pi} \rho} e^{-\rho^2}$.
\end{lemma}

\begin{proof}
The proof is a combination of triangle inequality
$$|H_t(r) - \tilde \1_{[\alpha,\beta]}(r)| \leq |h_t(-r) - \tilde \1_{[\alpha,\beta]}(-r)| + |h_t(r) - \tilde \1_{[\alpha,\beta]}(r)|.$$
and \cite[Lemma 21]{Monk} that says
$$|h_t(r) - \tilde \1_{[\alpha,\beta]}(r)| \leq \begin{cases} s(t|r-\alpha|), & r < \alpha \text{ or } r = \beta, \\
 s(t|r-\alpha|) +  s(t|r-\beta|), & \alpha < r < \beta, \\
 s(t|r-\beta|), & r > \beta \text{ or } r = \alpha,
\end{cases}$$
where $s(\rho) =  \frac{1}{2\sqrt{\pi} \rho} e^{-\rho^2}$.
\end{proof}

Let us now apply the Selberg trace formula to $H_t$, which gives us:
\begin{align*} 
&\frac1{|F|} \left( \sum_j H_t(r_j) + \frac1{4\pi} \int_{-\infty}^{\infty} H_t(r) \frac{-\varphi'_X}{\varphi_X}\Big(\frac12 + ir\Big) \, dr \right)\\
&\quad = \frac{1}{2\pi} \int_{\alpha}^{\beta} r \, \tanh(\pi r) \, dr + \cR(t,a,b) + \cR_K(X,t,a,b) + \cR_{NC}(X,t,a,b)
\end{align*}
where
$$ \cR(t,a,b) :=  \frac{1}{2\pi} \int_0^{+\infty} (H_t(r) - \mathbf{1}_{[\alpha, \beta]}(r)) \, r \, \tanh(\pi r) \, dr $$
$$ \cR_K(X,t,a,b)  := \frac1{|F|} \sum_{\gamma \in \mathcal H} \int_{F} K_t(d(z,\gamma z)) \, d\mu(z)$$
where $K_t$ is the inverse Selberg transform of $H_t$, and
$$  \cR_{NC}(X,t,a,b) :=\frac{H_t(0)}{4|F|} \text{Tr} \left( I - \Phi_X\big(\frac12\big) \right) 
- \frac{|\mathfrak{C}|}{|F|} G_t(0) \log 2 - \frac{|\mathfrak{C}|}{2\pi |F|} \int_{-\infty}^{\infty} H_t(r) \, \frac{\Gamma'}{\Gamma}(1+ ir) \, dr,  $$
with $G_t$ the inverse Fourier transform of $H_t$. We now proceed to estimate these three quantities.

The first one $\cR(t,a,b)$ has no dependence on the surface, and it can be estimated in exactly the same way as is done in \cite{Monk}. We have
\begin{lemma}[{\cite[Proposition 20]{Monk}}] For any $t \geq \frac1{10}$, and any $\frac14 < a < b$
 $$\cR(t,a,b) = O\left( \frac{\sqrt{b}}{t} \right).$$
\end{lemma}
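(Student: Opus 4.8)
Since $\cR(t,a,b)$ does not involve the surface at all, this is exactly the estimate of \cite[Proposition 20]{Monk}, and the plan is to reproduce its elementary mollification argument. First I would note that $0 \le r\tanh(\pi r)\le r$ on $[0,\infty)$, so it suffices to bound $\int_0^\infty |H_t(r) - \mathbf{1}_{[\alpha,\beta]}(r)|\,r\,dr$. I would then split $H_t(r) = h_t(r) + h_t(-r)$ and estimate the contribution of $h_t - \mathbf{1}_{[\alpha,\beta]}$ and of the reflected piece $r \mapsto h_t(-r)$ separately on $[0,\infty)$.

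For the mollification piece, the idea is to write $h_t(r) - \mathbf{1}_{[\alpha,\beta]}(r) = \int_\R (\mathbf{1}_{[\alpha,\beta]}(r-y) - \mathbf{1}_{[\alpha,\beta]}(r))\,v_t(y)\,dy$, apply Fubini, and use that the symmetric difference of $[\alpha,\beta]$ with its translate $[\alpha+y,\beta+y]$ has measure $\le 2|y|$ and sits within distance $|y|$ of $\{\alpha,\beta\}\subset[0,\beta]$; this gives $\int_0^\infty|\mathbf{1}_{[\alpha,\beta]}(r-y) - \mathbf{1}_{[\alpha,\beta]}(r)|\,r\,dr \le 2|y|(\beta+|y|)$. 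Integrating against $v_t$ and plugging in the Gaussian moments $\int_\R|y|v_t(y)\,dy = (t\sqrt\pi)^{-1}$ and $\int_\R y^2 v_t(y)\,dy = (2t^2)^{-1}$ should yield a bound of size $\lesssim \beta/t + 1/t^2$. For the reflected piece, for $r\ge 0$ one has $h_t(-r) = \int_\alpha^\beta v_t(r+\rho)\,d\rho$ (using that $v_t$ is even), so swapping the order of integration and substituting $s = r+\rho$ should give $\int_0^\infty h_t(-r)\,r\,dr \le \int_\alpha^\beta \int_\rho^\infty s\,v_t(s)\,ds\,d\rho = \int_\alpha^\beta \frac{1}{2t\sqrt\pi}e^{-t^2\rho^2}\,d\rho \lesssim 1/t^2$.

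Putting these together yields $|\cR(t,a,b)| \lesssim \beta/t + 1/t^2$, and since $\beta = \sqrt{b-1/4}\le\sqrt b$ with $\sqrt b \ge 1/2$, the constraint $t\ge 1/10$ turns $1/t^2$ into $O(\sqrt b/t)$, finishing the proof. I do not expect a genuine obstacle here: the only thing to be mildly careful about is that the weight $r$ is unbounded, but the Gaussian decay of $H_t - \mathbf{1}_{[\alpha,\beta]}$ away from $[\alpha,\beta]$ absorbs it and is precisely what produces the factor $\sqrt b$ rather than something larger.
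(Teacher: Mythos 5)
Your argument is correct, and every step checks out: the reduction $|r\tanh(\pi r)|\le r$, the identity $h_t(r)-\mathbf{1}_{[\alpha,\beta]}(r)=\int_\R(\mathbf{1}_{[\alpha,\beta]}(r-y)-\mathbf{1}_{[\alpha,\beta]}(r))v_t(y)\,dy$, the bound $2|y|(\beta+|y|)$ on the weighted measure of the symmetric difference (each point of which indeed lies within $|y|$ of $\{\alpha,\beta\}$ and hence below $\beta+|y|$), the Gaussian moments $\int|y|v_t=(t\sqrt\pi)^{-1}$ and $\int y^2v_t=(2t^2)^{-1}$, and the tail computation $\int_\rho^\infty s\,v_t(s)\,ds=\tfrac{1}{2t\sqrt\pi}e^{-t^2\rho^2}$ for the reflected term. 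The final absorption of $1/t^2$ into $O(\sqrt b/t)$ via $\sqrt b\ge 1/2$ and $t\ge 1/10$ is exactly where both hypotheses of the lemma are used. Note that the paper gives no proof of this lemma at all — it simply cites Monk — so there is no ``paper proof'' to match; for comparison, Monk's own argument establishes pointwise complementary-error-function bounds on $|h_t(r)-\mathbf{1}_{[\alpha,\beta]}(r)|$ (the estimates quoted later in this paper as \cite[Lemma 21]{Monk}) and then integrates them region by region over $r\le\alpha$, $\alpha\le r\le\beta$, $r\ge\beta$. Your Fubini/translation argument reaches the same bound without needing the pointwise estimates, which is slightly slicker here, though note that the pointwise bounds of \cite[Lemma 21]{Monk} are still needed elsewhere in this section (e.g.\ for $\cR_{NC}$ and for the lower bound on $R(X,I)$), so they cannot be dispensed with entirely.
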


For $\cR_K$ the estimate is virtually the same as in \cite{Monk}, except that the injectivity radius is replaced by the systole. To see that, let us recall that for a general hyperbolic surface $X = \Gamma \backslash \H$ the injectivity radius can be written as 
$$\inj_X = \frac12 \inf_{z\in X} \inf_{\gamma \in \Gamma} d(z,\gamma z)$$
and the systole as
$$\sys(X) = \inf_{z\in X} \inf_{\gamma \in \cH} d(z,\gamma z),$$
where $\cH$ is the set of hyperbolic elements in $\Gamma$. On compact surfaces we have $\Gamma = \cH$ so $\sys(X) = 2\inj_X$. On a non-compact surface, the relevant quantity to estimate $\cR_K$ is the systole, as $\cR_K$ involves the sum over hyperbolic elements. We use the following lemma.

\begin{lemma}\label{l:countingsys}
For a hyperbolic surface $X = \Gamma \backslash \H$, if $\mathcal H$ denotes the set of hyperbolic elements of $\Gamma$, we have
$$\# \{ \gamma \in \mathcal H :  d(z, \gamma z) \leq j \} \lesssim \frac{e^j}{\min\{1,\sys(X)^2\}}$$
for any $z \in \H$ and $j > 0$.
\end{lemma}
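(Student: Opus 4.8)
\emph{Strategy.} This is a lattice‑point count of the same type as Lemma~\ref{lma:normsurface}, but restricted to hyperbolic elements; the point of the restriction is that the injectivity radius of $X$ can degenerate only inside the cuspidal neighbourhoods, and there every short loop is parabolic, so once parabolic elements are discarded the relevant scale becomes $\min\{1,\sys(X)\}$ rather than $\inj_X$. Since $\mathcal H$ is stable under conjugation by $\Gamma$, the quantity $N_z(j):=\#\{\gamma\in\mathcal H : d(z,\gamma z)\le j\}$ satisfies $N_{\delta z}(j)=N_z(j)$ for $\delta\in\Gamma$, so I may take $z$ in a fixed fundamental domain. Write $\mu_0>0$ for the Margulis constant of $\H$ and $\eta:=\tfrac14\min\{\mu_0,\sys(X)\}$, so that $\eta\asymp\min\{1,\sys(X)\}$. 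I will use two facts: (i) if $\gamma$ is hyperbolic then $d(w,\gamma w)\ge\ell(\gamma)\ge\sys(X)$ for every $w\in\H$, $\ell(\gamma)$ being the minimal displacement; and (ii) by the Margulis lemma the $\mu_0$‑thin part of $X$ is a disjoint union of cuspidal neighbourhoods and collars of closed geodesics of length $<\mu_0$, and \emph{outside} the cuspidal neighbourhoods one has $\inj_X\ge\tfrac12\min\{\mu_0,\sys(X)\}\ge2\eta$ (in a collar the only short loop winds the core geodesic, of length $\ge\sys(X)$). Lifting to $\H$, the cuspidal part pulls back to a $\Gamma$‑invariant family $\{\mathcal B_i\}$ of pairwise disjoint open horoballs, and $\inj_X\ge2\eta$ on $\H\setminus\bigcup_i\mathcal B_i$.

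\emph{Case 1: $z\notin\bigcup_i\mathcal B_i$.} Then every orbit point $\delta z$ ($\delta\in\Gamma$) again lies in $\H\setminus\bigcup_i\mathcal B_i$ (the family is $\Gamma$‑invariant), so $\inj_X(\delta z)\ge2\eta$ and hence $d(\gamma z,\gamma' z)\ge4\eta$ for $\gamma\ne\gamma'$ in $\Gamma$. Thus the balls $B(\gamma z,2\eta)$, over all $\gamma$ with $d(z,\gamma z)\le j$, are pairwise disjoint and contained in $B(z,j+2\eta)$, and a volume comparison gives
\[
N_z(j)\ \le\ \frac{\Vol\bigl(B(j+2\eta)\bigr)}{\Vol\bigl(B(2\eta)\bigr)}\ \lesssim\ \frac{e^{j}}{\min\{1,\eta^{2}\}}\ \lesssim\ \frac{e^{j}}{\min\{1,\sys(X)^{2}\}}.
\]
(Here hyperbolicity is not even needed; it is this case where the factor $1/\sys(X)^2$ is genuinely present, coming from $z$ sitting in a thin collar.)

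\emph{Case 2: $z$ lies in a horoball $\mathcal B$, attached to a cusp $\mathfrak b$.} This is the heart of the lemma: the hyperbolic orbit points $\gamma z$ can themselves be deep in the cusps, so the packing of Case~1 breaks down. For $\gamma\in\mathcal H$ we have $\gamma z\in\gamma\mathcal B$ with $\gamma\mathcal B\ne\mathcal B$ ($\gamma$ not parabolic), and $\gamma\mathcal B=\gamma'\mathcal B$ exactly when $\gamma^{-1}\gamma'\in\Gamma_{\mathfrak b}=\langle\gamma_{\mathfrak b}\rangle$, i.e.\ $\gamma'=\gamma\gamma_{\mathfrak b}^{\,n}$. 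Grouping the elements counted by $N_z(j)$ according to the horoball $\gamma\mathcal B$, a group is a string $\{\gamma\gamma_{\mathfrak b}^{\,n}z\}_{n\in\Z}$ lying on a single horocycle inside $\gamma\mathcal B$ (the $\mathrm{Stab}(\gamma\mathcal B)$‑orbit of $\gamma z$), so in the standard coordinates of the cusp, with $y=\Im z$ and truncation height $Y$, the number of its members in $B(z,j)$ is a one‑dimensional count: it is the number of integer translates, along a horocycle at height $y$, of a point lying in $B(z,j)$, which is $\lesssim \sqrt{y/Y}\,e^{(j-D)/2}$ with $D=d(z,\gamma\mathcal B)$. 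On the other hand, each $\gamma\mathcal B$ contains a fixed ball $\gamma B_0$ with $B_0\subset\mathcal B$ of radius $\eta$ at depth $1$; these balls are pairwise disjoint (their centres are $\Gamma$‑translates of the centre of $B_0$, a point with $\inj_X\gtrsim\eta$) and each centre is within $D+O(1)$ of $z$, so $\#\{\gamma\mathcal B:\ d(z,\gamma\mathcal B)\in[D,D+1)\}\lesssim e^{D}/\eta^{2}$. Summing, and using that $D$ runs from $\asymp\log(y/Y)$ to $\asymp j-\log(y/Y)$,
\[
N_z(j)\ \lesssim\ \sum_{D}\frac{e^{D}}{\eta^{2}}\,\sqrt{y/Y}\,e^{(j-D)/2}\ \asymp\ \frac{\sqrt{y/Y}}{\eta^{2}}\,e^{j/2}\,e^{(j-\log(y/Y))/2}\ =\ \frac{e^{j}}{\eta^{2}}\ \asymp\ \frac{e^{j}}{\min\{1,\sys(X)^{2}\}},
\]
the $\sqrt{y/Y}$ cancelling against the geometric sum over horoballs.

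\emph{Main obstacle.} Case~1 is routine ball‑packing; the real content is Case~2 --- bounding the multiplicity with which hyperbolic orbit points of a cusp‑deep $z$ can pile up. The clean mechanism is the clustering above (orbit points in one horoball differ by a power of $\gamma_{\mathfrak b}$, reducing a cluster to a horocyclic count) together with the distance formula $d(\mathcal B,\gamma\mathcal B)=2\log(c_\gamma Y)$ relating horoball distance to the lower‑left entry $c_\gamma$ of $\gamma$; the delicate point is to make the cluster‑size estimate and the count of nearby horoballs fit together so that all dependence on the depth $y$ of $z$ cancels --- as it must, since the answer cannot see how far down the cusp $z$ sits --- and this is the only step that is not a straightforward application of volume packing.
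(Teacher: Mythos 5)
Your proof is correct in substance, but it is far more elaborate than the paper's, which disposes of the lemma in one sentence: it is ``exactly the same counting argument as in the proof of Lemma~\ref{lma:normsurface}'' with $\inj_{X(Y)}$ replaced by $\sys(X)$ --- i.e.\ only your Case~1. The interesting point is that your Case~2 supplies something the paper's reduction glosses over: the ball-packing argument needs the orbit points $\{\gamma z\}_{\gamma\in\mathcal H}$ to be $\asymp\min\{1,\sys(X)\}$-separated, and since $d(\gamma z,\gamma'z)=d(z,\gamma^{-1}\gamma'z)$ with $\gamma^{-1}\gamma'$ possibly parabolic even when $\gamma$ and $\gamma'$ are both hyperbolic, this separation can fail when $z$ lies deep in a cusp --- precisely the regime ($z\in(F)_{<L}$) in which the lemma is then applied to bound $\cR_K$. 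Your horoball-clustering mechanism (grouping by cosets $\gamma\Gamma_{\mathfrak b}$, a one-dimensional horocyclic count inside each image horoball, a packing count of the horoballs themselves, and the cancellation of the depth $y$ between the two) is the right way to handle that regime, and your bookkeeping of exponents checks out: the sum over $D\le j-\log(y/Y)$ of $e^{D}\cdot e^{(j-D)/2}$ is dominated by its last term and the factors of $\sqrt{y/Y}$ cancel, giving $e^{j}/\eta^{2}$ uniformly in the depth of $z$, as it must.

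One small repair in Case~2: to count the horoballs with $d(z,\gamma\mathcal B)\in[D,D+1)$ you place disjoint balls of radius $\eta$ at the points $\gamma(\text{centre of }B_0)$ and assert these centres lie within $D+O(1)$ of $z$. That is not automatic: $\gamma$ need not send the centre of $B_0$ anywhere near the point of $\gamma\mathcal B$ closest to $z$, so the translated centre can be much farther than $D$ from $z$. Instead, for each such horoball take the point at depth $1$ along the geodesic from $z$ into it; any two such points lie at depth $1$ inside disjoint horoballs and are therefore at distance at least $2$ from one another, each is within $D+2$ of $z$, and each carries a ball of radius $\eta\le 1$ contained in $B(z,D+3)$, which yields the desired bound $\lesssim e^{D}/\eta^{2}$ on the number of horoballs. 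With that adjustment your argument is complete.
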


The proof of Lemma \ref{l:countingsys} consists of exactly the same counting argument as in the proof of Lemma \ref{lma:normsurface}, but instead of counting over all elements of $\Gamma$ and considering elements with $z \in X(Y)$, where, recall $ X(Y) = X \setminus \bigcup_{\mathfrak{b}} X_{\mathfrak{b}} (Y),$ and $X_{\mathfrak{b}} (Y)$ is the cuspidal zone associated with $\mathfrak{b}$, we count only over hyperbolic elements and take $z \in \H$, the injectivity radius of the thick part $\inj_{X(Y)}$ gets therefore replaced by the systole $\sys(X)$.

Once this is understood, the extension of \cite[Lemma 24 and Proposition 25]{Monk} is immediate:
\begin{lemma}[{\cite[Proposition 25]{Monk}}]
There exists $g_0 \in \N$ such that for all $g \geq_0$,  $X \in \cA_{g,k(g)}$ and pair $\frac14 < a < b$ we have
$$\cR_K(X,t,a,b) \lesssim \sqrt{ \frac{b}{\log g}}$$
whenever $t := \frac{\sqrt{\log g}}{4 \sqrt{3}}$.
\end{lemma}

\begin{proof}
The computation is the same as in the compact case of \cite{Monk}, we sketch the argument without going into the computational details. We need to use the estimate on the kernel $K_t(\rho)$. From \cite[Lemma 23]{Monk} we have that for any $r \in (0,3)$, $t \geq \frac1{10}$ and $\rho \geq r$,
$$ K_t(\rho) \lesssim \frac{t \sqrt{b}}{r^2} \exp\left( -\frac{\rho^2}{4t^2} \right).$$
Take $r \leq \sys(X)$ and $L \geq 8t^2$. We estimate $\cR_K(X,t,a,b)$ splitting between the points $(F)_{<L}$ with radius of injectivity less than $L$ and the points $(F)_{\geq L}$ with radius of injectivity greater than $L$. For $(F)_{\geq L}$ we have, decomposing into a series and using Lemma \ref{l:countingsys}
\begin{align*}
&\frac1{|F|} \int_{(F)_{\geq L}} \sum_{j \geq L} \sum_{ j \leq d(z,\gamma z) < j+1} K_t(d(z,\gamma z))\, d\mu(z) \\
&\quad \lesssim \frac1{|F|} \int_{(F)_{\geq L}} \sum_{j \geq L} \frac{e^j}{r^2} \frac{t \sqrt{b}}{r^2} e^{-\frac{j^2}{4 t^2}} \, d\mu(z),
\end{align*}
and, recall, in the sum the elements $\gamma$ are hyperbolic. Apart from taking $r \leq \sys(X)$ instead of $r \leq \inj_X$, there is no difference with the compact case and given $L \geq 8t^2$ we can bound this quantity by
$$ \frac{t \sqrt{b}}{r^4} e^{-L}$$
Similarly for $(F)_{< L}$ we have a bound in 
$$ \frac{t^3 \sqrt{b}}{r^4} \frac{|(F)_{< L}|}{|F|} e^{L}.$$
We set $t = \frac{\sqrt{\log g}}{4 \sqrt{3}}$, and $L = \frac16 \log g = 8 t^2$. We can then use that in $\cA_{g,k(g)}$ we have $$\frac{|(F)_{< \frac16 \log g}|}{|F|} \leq g^{-\frac13}$$
and
$$\mathrm{sys}(X) \geq g^{-\frac1{24}}(\log g)^{\frac12},$$
which gives the required bound.
\end{proof}

The term $\cR_{NC}$ arises from the non-compactness (the parabolic elements of $\Gamma$) and therefore does not appear in \cite{Monk}. First note that for $\text{Re}(s) = \frac12$ the scattering matrix $\Phi_X(s)$ is unitary (see \cite[Theorem 6.6]{Iwa02}) and its rank is equal to the number of cusps, so the term $\text{Tr} \left( I - \Phi_X\big(\frac12\big) \right)$ is controlled by the number of cusps $|\mathfrak{C}|$. We thus have
$$  |\cR_{NC}(X,t,a,b)| \lesssim \frac{|\mathfrak{C}|}{|F|} \left( |H_t(0)| + |G_t(0)| + \left| \int_{-\infty}^{\infty} H_t(r) \, \frac{\Gamma'}{\Gamma}(1+ ir) \, dr \right| \right)  $$

%
%

We obtain the following bound.
\begin{lemma} \label{lma:NCbound}There exits $g_0 \in \N$ such that for all $g \geq g_0$ and $X \in  \cA_{g,k(g)}$ and every pair $\frac{1}{4} < a < b$ and $\epsilon >0$, we have
$$\cR_{NC}(X,t,a,b) \lesssim_\eps \frac{c_{a,b}}{\sqrt{g}}$$
for  $t := \frac{\sqrt{\log g}}{4 \sqrt{3}}$ and $c_{a,b} := \log\Big( \frac{\pi\sqrt{b - \frac{1}{4}}}{\sinh(\pi(\sqrt{a - \frac{1}{4}}))}\Big) + 1 + \sqrt{b}$.
\end{lemma}
\begin{proof}
If $X \in \cA_{g,k(g)}$, we know that for some $0 \leq \kappa < 1/2$ the number of cusps $|\mathfrak{C}| = k(g) = O(g^\kappa)$, so in particular, $ \frac{|\mathfrak{C}|}{|F|} \lesssim g^{-1+\kappa}$. Thus the claim follows if we can prove the estimate
$$|H_t(0)| + |G_t(0)| + \left| \int_{-\infty}^{\infty} H_t(r) \, \frac{\Gamma'}{\Gamma}(1+ ir) \, dr \right| \lesssim_\eps  c_{a,b} \sqrt{\log g}.$$
For the term $H_t(0)$ we have the estimate
$$|H_t(0)| = 2 h_t(0) \leq \frac2{\pi} \int_{-\infty}^{+\infty} e^{-\rho^2} d\rho \leq 2.$$
For $G_t(0)$ we can compute that $g_t$ is given by
$$ g_t(u) = \frac1{\pi} \left( \frac{\sin(\beta u)}{u} - \frac{\sin(\alpha u)}{u} \right) e^{-\frac{u^2}{4t^2}},$$
and therefore
$$|G_t(0)| = 2 |g_t(0)| = \frac{2 (\beta - \alpha)}{\pi} = O\left(\sqrt{b} \right).$$
Finally, let us look at the integral $\int_{-\infty}^{\infty} H_t(r) \, \frac{\Gamma'}{\Gamma}(1+ ir) \, dr$. First we can split
\begin{align}\label{eq:splitGamma}\int_{-\infty}^{\infty} H_t(r) \, \frac{\Gamma'}{\Gamma}(1+ ir) \, dr = \int_{\alpha}^{\beta} \frac{\Gamma'}{\Gamma}(1+ ir) \, dr + \int_{-\infty}^{\infty} (H_t(r) - \mathbf{1}_{[\alpha, \beta]}(r)) \, \frac{\Gamma'}{\Gamma}(1+ ir) \, dr.\end{align}
Let us estimate the first integral $\int_{\alpha}^{\beta} \frac{\Gamma'}{\Gamma}(1+ ir) \, dr$ in \eqref{eq:splitGamma}. Since the digamma function $s \mapsto \Gamma'(s)/\Gamma(s)$ is holomorphic in a neighbourhood of the curve $L = \{1+ir : r \in [\alpha,\beta]\}$ in $\C$, the fundamental theorem of calculus for complex line integrals applied to the curve $L$ gives us 
\begin{align*}
\int_{\alpha}^{\beta} \frac{\Gamma'}{\Gamma}(1+ ir) \, dr =  i\left( \mathrm{Log}\,\Gamma(1+ i\beta) - \mathrm{Log}\,\Gamma(1+ i\alpha) \right)
\end{align*}
where $\mathrm{Log}(w) = \log |w| + i\mathrm{Arg}(w)$, $\mathrm{Arg}(w) \in (0,2\pi]$, is the principal branch of the complex natural logarithm. By \cite[6.1.31]{Ab}, we have $|\Gamma(1+iy)|^2 = \frac{\pi y}{\sinh (\pi y)}$ so
\begin{align*}
\left| \mathrm{Log}\,\Gamma(1+ i\beta) -  \mathrm{Log}\,\Gamma(1+ i\alpha) \right| &\leq |\mathrm{Log}\, \Gamma(1+i\beta)| + |\mathrm{Log}\, \Gamma(1+i\alpha)|\\
&\leq \log|\Gamma(1+i\beta)| + \log|\Gamma(1+i\alpha)| + 2\pi\\
& =  \frac{1}{2}\log\frac{\pi \beta}{\sinh (\pi \beta)} + \frac{1}{2}\log \frac{\pi \alpha}{\sinh (\pi \alpha)} + 2\pi.
\end{align*}
Since $\sinh(\pi \beta) \geq \sinh(\pi \alpha)$, $\alpha = \sqrt{a - \frac{1}{4}}$, $\beta = \sqrt{b - \frac{1}{4}}$, we have
$$\left| \int_{\alpha}^{\beta} \frac{\Gamma'}{\Gamma}(1+ ir) \, dr \right| \lesssim \log\Big( \frac{\pi\sqrt{b - \frac{1}{4}}}{\sinh(\pi(\sqrt{a - \frac{1}{4}}))}\Big) + 1.$$

We are left with estimating the second integral $ \int_{-\infty}^{\infty} (H_t(r) - \mathbf{1}_{[\alpha, \beta]}(r)) \, \frac{\Gamma'}{\Gamma}(1+ ir) \, dr$ from \eqref{eq:splitGamma}. If we apply Lemma \ref{lma:decayHt} to bound $|H_t(r) - \mathbf{1}_{[\alpha, \beta]}(r)|$, we end up having singularities near $\pm\alpha$ and $\pm\beta$ so we have to truncate the integration using a parameter $0 < \eps < (\beta-\alpha)/2$, which we set $\eps := 1/t$, where, recall $t = \frac{\sqrt{\log g}}{4 \sqrt{3}}$, which will force us to assume the genus $g$ is large enough. Define
$$C_{\eps} := \R \setminus \Big([\alpha - \eps,\alpha+\eps] \cup [\beta-\eps,\beta+\eps] \cup [-\alpha - \eps,-\alpha+\eps] \cup [-\beta-\eps,-\beta+\eps]  \Big).$$
Since $h_t(r) = \frac1{\sqrt{\pi}} \int_{t(\alpha - r)}^{t(\beta - r)} \exp (-\rho^2) \, d\rho$ and $H_t(r) = h_t(r) + h_t(-r)$, we can use a trivial estimate $|H_t(r) - \mathbf{1}_{[\alpha, \beta]}(r)| \leq 2(\beta-\alpha) t + 2$, $|\frac{\Gamma'}{\Gamma}(1+ ir)| \lesssim \log |r|$ for all $r \in \R$ by \cite[6.3.18]{Ab}, and that $t = \frac{\sqrt{\log g}}{4 \sqrt{3}}$, which together imply
$$ \int_{\R \setminus C_\eps} (H_t(r) - \mathbf{1}_{[\alpha, \beta]}(r)) \, \frac{\Gamma'}{\Gamma}(1+ ir) \, dr \lesssim_{\alpha,\beta} \eps \sqrt{\log g}.$$

Let us now complete the proof by dealing with the integral over $C_\eps$, where we will heavily use the estimate from Lemma \ref{lma:decayHt} that says any $r \in \R$ has the bound
$$|H_t(r) - \tilde \1_{[\alpha,\beta]}(r)| \leq \begin{cases} s(t|r+\alpha|) + s(t|r-\alpha|), & r < \alpha \text{ or } r = \beta, \\
s(t|r+\alpha|)  + s(t|r-\alpha|) +  s(t|r-\beta|), & \alpha < r < \beta, \\
s(t|r+\alpha|)  + s(t|r-\beta|), & r > \beta \text{ or } r = \alpha,
\end{cases}$$
where $s(\rho) = \frac{1}{2\sqrt{\pi} \rho} e^{-\rho^2}$ is a decreasing function in $\rho$.

Let us first consider the integration over $(\beta + \eps,\infty)$:
$$\int_{\beta + \eps}^\infty |H_t(r) - \tilde \1_{[\alpha,\beta]}(r)| \Big|\frac{\Gamma'}{\Gamma}(1+ ir)\Big| \, dr.$$
Since $|\frac{\Gamma'}{\Gamma}(1+ ir)| \lesssim \log |r|$ for all $r \in \R$ by \cite[6.3.18]{Ab}, a change of variable $\rho = t(r-\beta)$ gives us that we can bound this by a constant multiple of
$$\int_{\beta + \eps }^\infty s(t|r-\beta|) \log |r| \, dr \lesssim \frac{1}{\eps t^2} \int_{\eps t}^\infty e^{-\rho^2} \log \Big(\frac{\rho}{t} + \beta\Big) \, d\rho$$
since for $r > \beta + \eps$ and as $\beta > \alpha$, we know $s(t|r+\alpha|)  + s(t|r-\beta|) \leq 2s(t|r-\beta|)$ as $s(\rho)$ is decreasing in $\rho$. Since $\log(\rho / t + \beta) \leq \log(\rho + \beta)$ whenever $t = \frac{\sqrt{\log g}}{4 \sqrt{3}} > 1$, that is, for large enough genus $g$, then using $\eps = 1/t$, we obtain a bound
$$\int_{\beta + \eps}^\infty |H_t(r) - \tilde \1_{[\alpha,\beta]}(r)| \Big|\frac{\Gamma'}{\Gamma}(1+ ir)\Big| \, dr \lesssim \frac{1}{t}\int_{0}^\infty e^{-\rho^2} \log \Big(\rho+ \beta\Big) \, d\rho \lesssim_\beta \frac{1}{\sqrt{\log g}}$$
as the integral over $[0,\infty]$ is a finite constant depending on $\beta$.

We can repeat this idea for the other parts of the integration. If $r < \alpha - \eps$, we know that $|H_t(r) - \tilde \1_{[\alpha,\beta]}(r)|\leq s(t|r+\alpha|) + s(t|r-\alpha|)$. Depending on now whether $r$ is positive or negative, we use a different bound. In case here $r \geq 0$, as $s$ is decreasing, we will use the bound $s(t|r+\alpha|) + s(t|r-\alpha|) \leq 2s(t|r-\alpha|)$. In case $r < 0$, we use instead $s(t|r+\alpha|) + s(t|r-\alpha|) \leq 2s(t|r+\alpha|)$. Therefore, we can always bound:
\begin{align*}&\int_{(-\infty,\alpha-\eps] \cap C_\eps} |H_t(r) - \tilde \1_{[\alpha,\beta]}(r)| \Big|\frac{\Gamma'}{\Gamma}(1+ ir)\Big| \, dr\\
& \lesssim \int_{-\infty}^{-\alpha-\eps} s(t|r+\alpha|)|\log |r|| \, dr +  \int_{-\alpha + \eps}^{\infty} s(t|r+\alpha|)|\log |r|| \, dr + \int_{-\infty}^{\alpha - \eps} s(t|r-\alpha|)|\log |r|| \, dr \\
& \lesssim_{\alpha} \frac{1}{\sqrt{\log g}},
\end{align*}
where the last inequality is performed exactly like before with the integral over $(\beta+\eps,\infty)$.

Finally, we need to bound the integral over $[\alpha+\eps,\beta-\eps]$. Here $|H_t(r) - \tilde \1_{[\alpha,\beta]}(r)| \leq s(t|r+\alpha|)  + s(t|r-\alpha|) +  s(t|r-\beta|)$. Now, setting $r_0 := (\beta-\alpha)/2$, we know that $|r_0-\alpha| = |r_0-\beta|$ so for $r < r_0$, we will use the bound $|H_t(r) - \tilde \1_{[\alpha,\beta]}(r)| \leq 3s(t|r-\alpha|)$ and for $r \geq r_0$, the bound $|H_t(r) - \tilde \1_{[\alpha,\beta]}(r)| \leq 3s(t|r-\beta|)$, which are possible as $s(\rho)$ is decreasing. These give us
\begin{align*}&\int_{\alpha + \eps}^{\beta-\eps} |H_t(r) - \tilde \1_{[\alpha,\beta]}(r)| \Big|\frac{\Gamma'}{\Gamma}(1+ ir)\Big| \, dr\\
& \lesssim \int_{\alpha + \eps}^{\infty} s(t|r-\alpha|)|\log |r|| \, dr +  \int_{-\infty}^{\beta-\eps} s(t|r-\beta|)|\log |r|| \, dr \\
& \lesssim_{\alpha,\beta} \frac{1}{\sqrt{\log g}}
\end{align*}
as with the other cases.
\end{proof}

For $t = \frac{\sqrt{\log g}}{4 \sqrt{3}}$, the previous lemmas give us
\begin{align*} 
\frac1{|X|} \left( \sum_j H_t(r_j) + \frac1{4\pi} \int_{-\infty}^{\infty} H_t(r) \frac{-\varphi'_X}{\varphi_X}\Big(\frac12 + ir\Big) \, dr \right)
 = \frac{1}{4\pi} \int_{\alpha}^{\beta} r \, \tanh(\pi r) \, dr + O\left(\sqrt{ \frac{b}{\log g}} \right)
\end{align*}
We now need to take special care of the complex values $r_j \in \C$ in the discrete sum on the left-hand side. This is because the test function $H_t$ is not real valued and small for complex values. We use a bound on the number of complex $r_j$, or equivalently on the number of eigenvalues $\leq \frac14$. We remark that this number of eigenvalues is $\leq 2g - 2 + k(g) = O(|X|)$ by a result of Otal and Rosas \cite{OtalRosas}. This gives the following lemma from \cite[Lemma 26]{Monk} with an identical proof, because it only uses the fact that the number of complex $r_j$ is of the order of the volume of $X$. Note here is the only place where the assumption $a \geq \frac{1}{2}$ is needed.

\begin{lemma}\label{lma:12need} Assume now that $\frac{1}{2} \leq a < b$. If $X \in \cA_{g,k(g)}$ with $k(g) = O(g^{\kappa})$ for some $0 \leq \kappa < 1/2$, then
$$\frac1{|X|} \sum_{r_j \notin \R} H_t(r_j) \lesssim \frac1{t}.$$
\end{lemma}

Using this, we end up with the following proposition, similar to \cite[Corollary 27]{Monk}.
\begin{prop}\label{p:tracetest}
There exists $g_0 \in \N$ such that for any $g \geq g_0$, any $\frac{1}{2} \leq a < b$ and any hyperbolic surface $X \in \cA_{g,k(g)}$ with $k(g) = O(g^{\kappa})$ for some $0 \leq \kappa < 1/2$, we have
\begin{align*} 
&\Big|\frac1{|X|} \left( \sum_{r_j \in \R} H_t(r_j) + \frac1{4\pi} \int_{-\infty}^{\infty} H_t(r) \frac{-\varphi'_X}{\varphi_X}\Big(\frac12 + ir\Big) \, dr \right) -  \frac{1}{4\pi} \int_{a}^{b}  \tanh(\pi \sqrt{\lambda - \tfrac{1}{4}}) \, d\lambda \Big|\\
 & = O\left(\sqrt{ \frac{b}{\log g}} \right),
\end{align*}
for $t = \frac{\sqrt{\log g}}{4 \sqrt{3}}$, $\alpha = \sqrt{a - \frac{1}{4}}$ and $\beta = \sqrt{b - \frac{1}{4}}$, where the implied constant only depends on the exponent $\kappa$.
\end{prop}

Proposition \ref{p:tracetest} allows us to control the part of the spectrum in $[\frac{1}{2},\infty)$. However, to prove Theorem \ref{thm:sp2}, we would also need to consider the spectrum in $[0,\frac{1}{2})$ for which we need a different test function that gives an analogue of Proposition \ref{p:tracetest} since $H_t$ does not have good estimates here. This is done similarly as in Monk \cite[Section 3.2]{Monk}, where we can use the analytic and even test functions for $0 \leq a < b \leq 1$ defined by
$$ \tilde H_t(r): = f_t\left(\frac14 + r^2\right)$$
where for any $\lambda \geq 0$
$$ f_t(\lambda) := (\mathbf{1}_{[a,b]} \ast v_t)(\lambda) = \frac{t}{\sqrt{\pi}} \int_a^b \exp(-t^2(\lambda - \mu)^2) \, d\mu.$$
In \cite[Section 3.2]{Monk} Monk used the notation $h_t$ for $\tilde H_t$ but we want to avoid confusion with the notation in our earlier definition $H_t$. The proof follows virtually the same steps and adapts in the same way, using same Selberg's trace formula (Lemma \ref{lma:SelbergTrace}) as we did above, all the relevant estimates associated with this new test function are in \cite[Lemma 11]{Monk} as we did with $H_t$ above using Lemma \ref{lma:decayHt}, which gives us the analogue of \cite[Corollary 18]{Monk}:

\begin{prop}\label{p:tracetestf}
There exists $g_0 \in \N$ such that for any $g \geq g_0$, any $0 \leq a < b \leq 1$ and any hyperbolic surface $X \in \cA_{g,k(g)}$ with $k(g) = O(g^{\kappa})$ for some $0 \leq \kappa < 1/2$, we have
\begin{align*} 
&\Big|\frac1{|X|} \left( \sum_{r_j \in \R} \tilde H_{\tilde t}(r_j) + \frac1{4\pi} \int_{-\infty}^{\infty} \tilde H_{\tilde t}(r) \frac{-\varphi'_X}{\varphi_X}\Big(\frac12 + ir\Big) \, dr \right) -  \frac{1}{4\pi} \int_{a}^{b}  \tanh(\pi \sqrt{\lambda - \tfrac{1}{4}}) \, d\lambda \Big|\\
 & = O\left(\sqrt{ \frac{b}{\log g}} \right),
\end{align*}
for $\tilde t := \frac{\sqrt{\log g}}{62 \sqrt{6}}$, $\alpha = \sqrt{a - \frac{1}{4}}$ and $\beta = \sqrt{b - \frac{1}{4}}$, where the implied constant only depends on the exponent $\kappa$.
\end{prop}

Let us now show how Theorem \ref{thm:sp2} follows from Proposition \ref{p:tracetest}. We will follow a similar strategy as in \cite{Monk}, where we first apply Proposition \ref{p:tracetest} to small intervals $[b_j,b_{j+1}]$ of length $1/t \approx 1/\sqrt{\log g}$, but now due to errors produced by Proposition \ref{lma:comparescattering}, we end up with extra coefficient in front of the constant term that the Gaussian tail of $h_t$ will mitigate in the end.

Let us first give a technical upper bound lemma that follows by combining Proposition \ref{p:tracetest} with Proposition \ref{lma:comparescattering}. It has an error produced by the spectral window that we do not want in Theorem \ref{thm:sp2}, but applying it with short intervals $J$ of length roughly $1/\sqrt{\log g}$ together with the Gaussian tail bounds for the propagators $H_t$ and $\tilde H_t$ to prove Theorem \ref{thm:sp2}.

\begin{lemma}\label{l:spectral estimate}
There exists $g_0 \in \N$ such that for any $g \geq g_0$, any $0 \leq b_1 < b_2$ and any hyperbolic surface $X \in \cA_{g,k(g)}$ with $k(g) = O(g^{\kappa})$ for some $0 \leq \kappa < 1/2$, if $J = [b_1,b_2]$, we have
$$\Big|\frac{N(X,J) + M(X,J)}{|X|} - \frac{1}{4\pi} \int_I  \, \tanh(\pi \sqrt{\lambda - \tfrac{1}{4}}) \, d\lambda\Big| =   O\left(\sqrt{ \frac{b_2}{\log g}} + (\beta_2 - \beta_1) \right)$$
where $b_j = \frac14 + \beta_j^2$, $j = 1,2$, and the implied constant only depends on the exponent $\kappa$. 
\end{lemma}
\begin{proof}
Let us consider first the case $b_1 \geq \frac{1}{2}$ and so define now $H_t$ using the parameters $a = b_1$ and $b = b_2$. Define the signed measure
$$ d\nu(r) := \sum_{r_j \in \R} d\delta_{r_j}(r) + \frac1{2\pi} \frac{-\phi_X'}{\phi_X}\left(\frac12 + ir\right) dr$$
and the positive measure
$$ d\tilde \nu(r) := \sum_{r_j \in \R} d\delta_{r_j}(r) +  \frac1{2\pi} \Big|\frac{-\phi_X'}{\phi_X}\left(\frac12 + ir\right)\Big| dr.$$
Then we have:
$$N(X,J) + M(X,J) = \int 1 \, d\nu \leq \int 1 \, d\tilde \nu.$$
Now Proposition \ref{lma:comparescattering} and positivity of $H_t$ and $\tilde \nu$ implies that for some constant $C > 0$ independent of $X$ we have
\begin{align*}
\frac{N(X,J) + M(X,J)}{|X|} \times \inf_{[\beta_1, \beta_2]} H_t  & \leq \frac{1}{|X|} \int 1 \,d\tilde \nu(r) \times \inf_{[\beta_1, \beta_2]} H_t  \\
&\leq \frac1{|X|} \int H_t(r) \,d\tilde\nu(r) \\
& \leq \frac1{|X|} \Big( \left| \int H_t(r)\, d\nu(r)\right| \\
&\qquad \qquad + C(k(g) \log(|X|) + |X|)\int H_t(r) \, dr \Big).
\end{align*}
Now by Proposition \ref{p:tracetest} applied with $a = b_1$, $b = b_2$ and $t = \frac{\sqrt{\log g}}{4 \sqrt{3}}$, the integral with respect to $\nu$ has the estimate:
$$\frac1{|X|} \int H_t(r) \,d\nu(r) = \frac{1}{4\pi} \int_J \, \tanh(\pi \sqrt{\lambda - \tfrac{1}{4}}) \, d\lambda + O\left(\sqrt{ \frac{b_2}{\log g}} \right).$$
Thus, as $k(g) = O(g^{\kappa})$ for some $0 \leq \kappa < 1/2$, we arrive to:
\begin{align*}
&\frac1{|X|} \left( \left| \int H_t(r) d\nu(r)\right| + (k(g) \log(|X|)+ |X|)\int_0^\infty H_t(r) \, dr \right)\\
& =  \left|\frac1{|X|} \int H_t(r) \,d\nu(r)\right| + \Big(\frac{k(g) \log(|X|)}{|X|} + 1\Big)\int H_t(r) \, dr \\
& \leq \frac{1}{4\pi} \int_J  \, \tanh(\pi \sqrt{\lambda - \tfrac{1}{4}}) \, d\lambda  + O\Big( \sqrt{\frac{b_2}{\log g}} + \int H_t(r) \, dr\Big).
\end{align*}
Finally, using the the estimates on $|H_t(r) - \tilde\1_{[\alpha, \beta]}(r)|$ from Lemma \ref{lma:decayHt} we can deduce that
$\inf_{[\beta_1, \beta_2]} H_t = O(1)$ and $\int H_t(r) \, dr = O(\beta_2 - \beta_1)$, which completes the proof when $b_1 \geq \frac{1}{2}$. Now when $b_2 \leq 1$, we can just repeat the above argument by using $\tilde H_t$ instead and apply Proposition \ref{p:tracetestf} instead with $t =  \frac{\sqrt{\log g}}{4 \sqrt{3}}$ replaced by $\tilde t = \frac{\sqrt{\log g}}{62 \sqrt{6}}$, which gives the claim.
\end{proof}

Returning now to the statement of Theorem \ref{thm:sp2}, the upper and lower bounds on $R(X,I)$ are then proved in a similar way as in \cite[Section 3.5]{Monk}, but we need to be careful with the continuous part of the spectral density and use Proposition \ref{lma:comparescattering}. We give the proof of the lower bound, which is the part we really need in this paper.

\begin{proof}[Proof of the lower bound for $R(X,I)$ in Theorem \ref{thm:sp2}]
Consider first the case $\frac{1}{2} \leq a < b$. Then $\alpha,\beta$ determined by $a = \frac14 + \alpha^2$ and $b = \frac14 + \beta^2$ satisfy $\frac{1}{2}\leq \alpha < \beta$. To prove the lower bound in Theorem \ref{thm:sp2}, we need to find $C > 0$ independent of $X$ and $a,b$ such that
$$\frac{N(X,I) + M(X,I)}{|X|} \geq \frac{1}{4\pi} \int_I  \, \tanh(\pi \sqrt{\lambda - \tfrac{1}{4}}) \, d\lambda - C\sqrt{\frac{b+1}{\log g}}.$$
Let us split
\begin{align*}
\frac{N(X,I) + M(X,I)}{|X|} = &\,\,\frac{1}{|X|} \int H_t \, d\nu - \frac{1}{|X|} \int_\beta^\infty H_t \, d\nu - \frac{1}{|X|} \int_{-\infty}^\alpha H_t \, d\nu \\
&- \frac{1}{|X|} \int (H_t-\1_{[\alpha,\beta]}) \, d\nu
\end{align*}
Let us now bound each of these term individually. For the first term, $\frac{1}{|X|} \int H_t \, d\nu$, we apply Proposition \ref{p:tracetest}, which gives us
$$\frac1{|X|} \int H_t(r) \, d\nu(r) \geq \frac{1}{4\pi} \int_I  \, \tanh(\pi \sqrt{\lambda - \tfrac{1}{4}}) \, d\lambda - O\left( \sqrt{ \frac{b}{\log g}}\right)$$
for an implied constant independent of $X$, $a$ and $b$. Therefore, we will be done, if we can bound all the other terms in absolute value above by a constant multiple of $\sqrt{\frac{b+1}{\log g}}$ using the Gaussian tail of $h_t$ defining $H_t$.

Let us first look at in detail bounding the term $\frac{1}{|X|} \int_\beta^\infty H_t \, d\nu$. For this purpose, we apply Lemma \ref{l:spectral estimate} on intervals of length $\frac1{\sqrt{\log g}}$. More precisely following \cite{Monk}, for the case $r \geq \beta$ we use a subdivision $b_j = b + \frac{j}{t}$, with $t = \frac{\sqrt{\log g}}{4 \sqrt{3}}$ and writing $\beta_j = \sqrt{b_j - \frac14}$. If we now use the trivial bound $|\int_{\beta_j}^{\beta_{j+1}} H_t \, d\nu| \leq \int_{\beta_j}^{\beta_{j+1}} H_t \, d\tilde \nu$ since $H_t \geq 0$, together with Lemma \ref{l:spectral estimate} applied to intervals $J = J_j = [b_j,b_{j+1}]$, we obtain an upper bound:

\begin{align*}
\frac1{|X|} \Big|\int_\beta^{\infty} H_t(r) \, d \nu(r)\Big| &\leq \sum_{j=0}^\infty \frac1{|X|}\Big| \int_{\beta_j}^{\beta_{j+1}} H_t(r) \, d \nu(r)\Big|\\
&\leq  \sum_{j=0}^\infty \frac1{|X|} \int_{\beta_j}^{\beta_{j+1}} H_t(r) \, d\tilde \nu(r)\\
&\leq \sum_{j=0}^\infty \frac{N(X,J_j) + M(X,J_j)}{|X|} \times \sup_{[\beta_j,\beta_{j+1}]} H_t\\
&= O\left(  \sum_{j=0}^\infty  \left( b_{j+1} - b_j + \sqrt{\frac{b_{j+1} + 1}{\log g}} + \beta_{j+1} - \beta_j \right) \times \sup_{[\beta_j,\beta_{j+1}]} H_t \right)\\
& = O\left(  \frac1{\sqrt{\log g}} + \frac1{\sqrt{\log g}}  \sum_{j=1}^\infty \sqrt{j} \sup_{[\beta_j,\beta_{j+1}]} H_t \right),
\end{align*}
where in last line we used the mean value theorem for square root to give $\beta_{j+1} - \beta_j \leq \frac{1}{2\beta_j}(b_{j+1}-b_j) \leq b_{j+1} - b_j$ as $\beta_j \geq \beta > \alpha \geq \frac{1}{2}$ by the assumption $a \geq \frac{1}{2}$ and also the uniform bound $\|H_t\|_\infty \leq 2$ used in the interval $[\beta_0,\beta_1]$. Now by Lemma \ref{lma:decayHt}, if $r \in [\beta_j,\beta_{j+1}]$, and $j \geq 1$, we have:
$$ |H_t(r)| \leq \frac{e^{-\sqrt{j}}}{2\sqrt{\pi} \sqrt{j}}.$$
Therefore we obtain
$$\frac1{|X|} \int_\beta^{\infty} H_t \,d \nu = O\left(\frac1{\sqrt{\log g}}\right).$$
Since the support of $\nu$ is contained in $[0,\infty)$, the other integrals $\frac1{|X|} \int_{-\infty}^{\alpha} H_t \,d \nu = \frac1{|X|} \int_{0}^{\alpha} H_t \,d \nu$ and $\frac{1}{|X|} \int (H_t-\1_{[\alpha,\beta]}) \, d\nu = \frac{1}{|X|} \int_0^\infty (H_t-\1_{[\alpha,\beta]}) \, d\nu$. They can be then dealt with an identical argument by splitting the integration domain inside $[0,\infty)$ into intervals of length $1/t$ and applying Lemma \ref{l:spectral estimate} together with the estimates from Lemma \ref{lma:decayHt}. Thus we have finished the proof in the case of $a \geq \frac{1}{2}$. Now if $0 \leq a < b \leq 1$, we can use the similar argument as above, but now with the test function $\tilde H_t$ and Proposition \ref{p:tracetestf} instead, which works for the subintervals of $[0,1]$.
\end{proof}

\section{Rate of quantum ergodicity on random surfaces}\label{sec:randomsurfaces}

Let us now discuss in detail how we can prove quantum ergodicity for eigenfunctions of the Laplacian on random surfaces in the large genus limit (Theorem \ref{thm:largegenus}). The proof of this follows from Theorem \ref{thm:main} together with Theorem \ref{thm:MMH} mentioned in the introduction and Theorem \ref{thm:sp2}. 

\begin{proof}[Proof of Theorem \ref{thm:largegenus}]Fix a compact interval $I \subset (1/4,+\infty)$ and large enough $g \geq 2$. Assume $k(g)$ satisfies $k(g) = O(g^{\kappa})$ for some $0 \leq \kappa < 1/2$. Fix now a small enough $\eps > 0$ such that $\frac{1}{4} - \Big(\frac{2\kappa+1}{4}\Big)^2 - \eps > 0$. Recall we defined that any $X \in \cB_{\eps,\kappa,g,k(g)} \subset \cM_{g,k(g)}$ satisfies the three conditions:
$$\frac{|(X)_{\leq \frac16 \log g}|}{|X|} \leq g^{-\frac13}, \quad \mathrm{sys}(X) \geq g^{-\frac1{24}}(\log g)^{\frac12} \quad \text{and} \quad \lambda_1(X) \geq \frac{1}{4} - \Big(\frac{2\kappa+1}{4}\Big)^2 - \eps,$$
Moreover, by Theorem \ref{thm:MMH}, there exists $\beta > 0$ such that
$$\P_{g,k(g)}(\cB_{\eps,\kappa,g,k(g)}) \geq 1-O_{\eps,\kappa}(g^{-\beta}), \quad \text{as } g \to \infty.$$
Thus Theorem \ref{thm:largegenus} follows, if we can establish that for all surfaces $X \in \cB_{\eps,\kappa,g,k(g)}$ and $a \in L^\infty_{Y}(X)$ for $Y := \log g$, the quantum mean absolute deviation
$$\Var_{X,I}(a) \lesssim_{I,\kappa} \frac{1}{\sqrt{\log g}} \|a\|_\infty$$
 with an implied constant \textit{independent} of $X$. Recall here $L^\infty_{Y}(X)$ is the set of $a \in L^\infty(X)$ such that the support of $a$ satisfies $\spt a \subset X(Y)$, where $ X(Y) = X \setminus \bigcup_{\mathfrak{b}} X_{\mathfrak{b}} (Y),$ and $X_{\mathfrak{b}} (Y)$ is the cuspidal zone associated with $\mathfrak{b}$.

Thus let us fix $X \in \cB_{\eps,\kappa,g,k(g)}$, $a \in L^\infty_Y(X)$ for $Y := \log g > 0$ as $g \geq 2$, and a compact interval $I \subset (\frac{1}{4},\infty)$, and bound $\Var_{X,I}(a)$ from above.

First of all, by Theorem \ref{thm:main}, there exists $R_I > 0$ such that for all $R > R_I$ such that
\begin{align*}
 \widetilde \Var_{X,I}(a) 
& \lesssim_I   \Big( \max\{N(X,I), k(g)\}^{1/2}\left( \frac{ |X|}{ \,\rho(\lambda_1(X)) R} +\frac{ e^{2R}}{\min\{1,{\inj_{X(Ye^{R/2})}}^2 \}} |(X)_{\leq R} | \right)^{1/2}  \\
&\quad +  2k(g) \log Y + k(g)^2 e^{-4\pi Y} +  \frac{k(g)}{|X|} \left(M(X,I) + k(g) \log(|X|) \Big) \right)\|a\|_\infty,
\end{align*}
where $\widetilde \Var_{X,I}(a)  = (N(X,I) + M(X,I))\Var_{X,I}(a)$ and $\rho(\lambda_1(X))$ is a function of the spectral gap of $X$. We will apply this bound with the choice $R := \frac{1}{16} \log(g)$, where we assume $g \geq 2$ is large enough such that $R > R_I$. 

Divide now the upper bound for $\widetilde \Var_{X,I}(a)$ by $N(X,I) + M(X,I)$, which leads to the following estimate:
\begin{align*}
\Var_{X,I}(a) 
& \lesssim_I   \Big[ \Big(\underbrace{\frac{\max\{N(X,I), k(g)\}}{N(X,I) + M(X,I)}}_{\text{Term (a)}}\Big)^{1/2} \left( \underbrace{\frac{\frac{ |X|}{ \,\rho(\lambda_1(X)) R} +\frac{ e^{2R}}{\min\{1,{\inj_{X(Ye^{R/2})}}^2 \}} |(X)_{\leq R} |}{N(X,I) + M(X,I)}}_{\text{Term (b)}} \right)^{1/2}  \\
&\quad +  \underbrace{\frac{2k(g) \log Y + k(g)^2 e^{-4\pi Y} +  \frac{k(g)}{|X|} \Big(M(X,I) + k(g) \log(|X|) \Big)}{N(X,I) + M(X,I) }}_{\text{Term (c)}}\Big] \|a\|_\infty,
\end{align*}
where we have indicated three terms (a), (b) and (c) that we will estimate now in the following.

\textit{Term (a).} First of all, Theorem \ref{thm:sp2} on quantitative spectral convergence implies that 
\begin{align}\label{eq:NMbound} N(X,I) + M(X,I) \geq |X| \left( c_I + R(X,I) \right),\end{align}
where
$$c_I := \frac{1}{4\pi}\int_{1/4}^\infty \1_I(\lambda) \tanh(\pi \sqrt{\lambda-1/4}) \, d\lambda \quad \text{and}\quad R(X,I) \gtrsim_{I,\kappa} - \sqrt{\frac1{\log g}}.$$
Thus, as $k(g) = O(g^{\kappa})$ for $\kappa < 1/2$, $|X| = O(g)$, we can bound the term (a) as follows:
$$\frac{\max\{N(X,I), k(g)\}}{N(X,I) + M(X,I)} =  \max\Big\{\frac{N(X,I)}{N(X,I) + M(X,I)}, \frac{k(g)}{N(X,I) + M(X,I)}\Big\}   \lesssim_{I,\kappa} 1.$$

\textit{Term (b).} By definition in the $\cB_{\eps,\kappa,g,k(g)}$ we have the following $X$ independent uniform spectral gap bound from below:
$$\lambda_1(X) \geq \frac{1}{4} - \Big(\frac{2\kappa+1}{4}\Big)^2 - \eps$$
for all $X \in \cB_{\eps,\kappa,g,k(g)}$. On the other hand, like we discussed in Remark \ref{rmk:unifsp}, this implies that $\rho(\lambda_1(X)) \geq c_0$ for all $X \in \cB_{\eps,\kappa,g,k(g)}$, where $c_0 > 0$ is independent of the surface $X$.

Moreover, as $Y = \log g$, $R = \frac{1}{16}\log (g)$, and $\mathrm{sys}(X) \geq g^{-\frac1{24}}(\log g)^{\frac12}$, we have for large enough $g$:
$$\inj_{X(Ye^{R/2})} = \frac{1}{2}\min\{\sys(X),e^{-R/2} Y^{-1}\} \gtrsim g^{-\frac1{24}}(\log g)^{\frac12}$$
as $e^{-R/2} = g^{-\frac{1}{32}}$. Furthermore, since $R = \frac{1}{16} \log(g)$, we have 
$$(X)_{\leq R} \subset (X)_{\leq \frac16 \log g}$$
giving
$$\frac{|(X)_{\leq R}|}{|X|} \leq \frac{|(X)_{\leq \frac16 \log g}|}{|X|} \leq g^{-\frac13}.$$ 
Therefore, by \eqref{eq:NMbound} and $e^{2R} = e^{2 \cdot \frac{1}{16} \log(g)} = g^{1/8}$ we obtain the following bound for the term (b):
\begin{align*} \frac{\frac{ |X|}{ \,\rho(\lambda_1(X)) R} +\frac{ e^{2R}}{\min\{1,{\inj_{X(Ye^{R/2})}}^2 \}} |(X)_{\leq R} |}{N(X,I) + M(X,I)} 
& \lesssim \frac{1}{ R}+ \frac{e^{2R}}{\min\{1,\inj_{X(Ye^{R/2})}^2\}} \frac{|(X)_{\leq R}|}{|X|}  \\
& \lesssim \frac{1}{\log g} + \frac{g^{1/8}}{\min\{1,\inj_{X(Ye^{R/2})}^2\}} \frac{|(X)_{\leq \frac16 \log g}|}{|X|}  \\
& \lesssim  \frac{1}{ \log g} + \frac{g^{1/8}}{g^{-\frac{1}{12}} \log g} g^{-1/3}\\
& \lesssim \frac{1}{ \log g}.
\end{align*}

\textit{Term (c).} Finally, using $Y = \log g$, $k(g) = O(g^{\kappa})$ for $\kappa < 1/2$, $|X| = O(g)$ and the bound \eqref{eq:NMbound}, we can estimate:
$$\frac{2k(g) \log Y + k(g)^2 e^{-4\pi Y} +  \frac{k(g)}{|X|} \left(M(X,I) + k \log|X| \right)}{N(X,I) + M(X,I)} \lesssim_{I,\kappa} \frac{\log\log g}{g^{1-\kappa}}.$$

Combining (a), (b) and (c) gives us the claim
$$\Var_{X,I}(a) \lesssim_{I,\kappa} \Big(1 \cdot \Big( \frac{1}{\log g}\Big)^{1/2} + \frac{\log\log g}{g^{1-\kappa}}\Big)  \|a\|_\infty \lesssim \frac{1}{\sqrt{\log g}} \|a\|_\infty$$
with a constant independent of $X$. Thus the proof of Theorem \ref{thm:largegenus} is complete.
\end{proof}

\section*{Acknowledgements}

We thank Michael Magee, Laura Monk, Paul Nelson, Jean Raimbault, and Abhishek Saha for useful discussion related to the topic. We also thank the anonymous referee for several useful comments on earlier versions of this manuscript.

\appendix
\section{Probability of a small systole}\label{a:systoleapp}

We extend here the result of Mirzakhani \cite[Theorem 4.2]{Mi} on the Weil-Petersson probability of having a small systole, adding a quantitative dependence on the the number of cusps $k$ and giving a proof for non-compact hyperbolic surfaces. This is used for Theorem \ref{thm:sp1}. 

The proof is essentially the same as Mirzakhani's argument, but we rely on Lemma A.4 from Hide \cite{Hide21}. Before, we state and prove the lemma, let us recall some notation from \cite{Mi}. If we have $L = (L_1,\dots,L_k)$ with $L_i \geq 0$, then we define the moduli space $\cM_{g,k}(L)$ as $\cM_{g,k}$ such that the boundary elements are associated with lengths $L_i$, $i = 1,2,\dots,k$. Thus the moduli space corresponding to finite area hyperbolic surfaces with $k$ cusps is given by $\cM_{g,k} = \cM_{g,k}(0,\dots,0)$. In each space $\cM_{g,k}(L)$ one can also consider the Weil-Petersson volume $\mathrm{Vol}_{\mathrm{WP}}$ and we have the following relation of the Weil-Petersson volumes $V_{g,k}(L_1,\dots,L_k)$ of $\cM_{g,k}(L_1,\dots,L_k)$ and $V_{g,k}$ of $\cM_{g,k}$:
$$V_{g,k}(L_1,\dots,L_k) \leq e^{L_1 + \dots + L_k} V_{g,k},$$
see \cite[(3.7)]{Mi}. 

We have the following result.

\begin{lemma}\label{lma:noncompactsystole}
Suppose $k(g) = o(\sqrt{g})$. Suppose $0 < \eps < 1$. Then for any $g \geq 2$:
$$\P_{g,k(g)} (X \in \cM_{g,k(g)} : \sys(X) \leq \eps) \lesssim \eps^2,$$
where the implied constant is independent of $\eps$ and $g$.
\end{lemma}

\begin{proof} 
Fix $0 < \eps < 1$ and $g \geq 2$ and define the event
$$\cM_{g,k(g)}^\eps := \{X \in \cM_{g,k(g)} : \sys(X) \leq \eps\}.$$ 
We just need to verify that
$$\frac{\Vol_{\mathrm{WP}}(\cM_{g,k(g)}^{\eps_g})}{V_{g,k(g)}} \lesssim \eps^2$$
with an implied constant independent of $\eps > 0$. Define the counting function
$$N_{0}(X,\eps) := \sharp \{\gamma \subset X : \ell_\gamma(X) \leq \eps, \gamma \text{ is non-separating}\},$$
that is, $N_0(X,\eps)$ is the number of simple closed geodesics $\gamma$ of length $\leq \eps$ on $X - \gamma$ is a surface of genus $g-1$ and $k(g)$ cusps and $2$ boundary curves. Furthermore, for $i \geq 1$ and $j \geq 0$, we define $N_{i,j}(X,\eps)$ as the number of simple closed geodesics $\gamma \subset X$ of length $\leq \eps$ which divide $X$ into a surface of genus $i$ and $j$ cusps and $1$ boundary curve and a surface of genus $g-i$ with $k(g)-j$ cusps and $1$ boundary curve. Now using
$$N(X,\eps) :=  N_0(X,\eps) + \sum_{i = 1}^{ \lceil g/2\rceil } \sum_{j = 0}^{ k(g)} N_{i,j}(X,\eps).$$
as in the proof of Theorem 4.2 of \cite{Mi} using Mirzakhani's integral formula (Theorem 2.2 \cite{Mi}) we can estimate:
\begin{align*} &\Vol_{\mathrm{WP}}(\cM_{g,k(g)}^{\eps})\leq \int_{\cM_{g,k(g)}} N(X,\eps) \, dX\\
& = \frac{1}{2} \int_0^{\eps} t \Vol_{\mathrm{WP}}(\cM_{g-1,k(g)+2}(0,0,\dots,0,t,t)) \, dt \\
& \qquad + \frac{1}{2}\sum_{i = 1}^{ \lceil g/2\rceil } \sum_{j = 0}^{ k(g) } {k(g) \choose j}   \int_0^{\eps} t \Vol_{\mathrm{WP}}(\cM(S_{i,j+1} \times S_{g-i,k-j+1}(0,0,\dots,0,t,t)) \, dt,
\end{align*}
where the binomial coefficient occurs as we need to select the $j$ cusps from all the $k(g)$ possibilities since the mapping class group fixes the cusps pointwise, which means that they will be in different mapping class group orbits. Now, by $V_{g,k(g)}(L_1,\dots,L_k) \leq e^{L_1 + \dots + L_k} V_{g,k(g)}$ as in the proof of \cite[Theorem 4.2]{Mi} we have:
$$\Vol_{\mathrm{WP}}(\cM_{g-1,k(g)+2}(0,0,\dots,0,t,t)) \leq e^{2t} V_{g-1,k(g)+2}.$$
and
$$\Vol_{\mathrm{WP}}(\cM(S_{i,j+1} \times S_{g-i,k(g)-j+1}(0,0,\dots,0,t,t)) \leq e^{2t} V_{i,j+1} V_{g-i,k(g)-j+1}.$$
By Lemma 5.1(iii) of Mirzakhani and Zograf \cite{MZ15}, there is a universal constant $C_2 > 0$ such that as long as $k(g) = o(\sqrt{g})$, we have as $g \to \infty$:
$$\frac{V_{g-1,k(g)+2} }{V_{g,k(g)}} \leq 1-C_2 \frac{k(g)-2}{2g - 4 + k(g)} = O(1).$$
Furthermore, by Lemma A.4 of Hide \cite{Hide21} there exists a universal constant $C_1 > 0$ such that for any sequence $(k(g))_{g \geq 2}$ with $k(g) = o(\sqrt{g})$, we have
$$\sum_{0 \leq i \leq g, 0 \leq j \leq k(g) \atop 2 \leq 2i + j \leq 2g + k(g) - 2}{ k(g)\choose j} \frac{ V_{i,j+1}V_{g-i,k(g) - j +1}}{V_{g,k(g)}} \leq C \frac{1+k(g)^2}{g} = o(1)$$
since whenever $1 \leq i \leq \lceil g/2\rceil$ and $0 \leq j \leq k(g)$, then $2 \leq 2i + j \leq 2g + k(g) - 2$ since $g \geq 2$. Thus
$$\frac{\Vol_{\mathrm{WP}}(\cM_{g,k(g)}^{\eps})}{V_{g,k(g)}} \lesssim \eps^2 e^{2\eps} \lesssim \eps^2$$
as $\eps < 1$, where the implicit constant is independent of $\eps$, $k(g)$ and $g$.
\end{proof}

\bibliographystyle{plain}
\bibliography{quantumgenus_rev}

\begin{thebibliography}{10}

\bibitem{7samurai}
Miklos Abert, Nicolas Bergeron, Ian Biringer, Tsachik Gelander, Nikolay
  Nikolov, Jean Raimbault, and Iddo Samet.
\newblock On the growth of {$L^2$}-invariants for sequences of lattices in lie
  groups.
\newblock {\em Ann. of Math. (2)}, 185(3):711--790, 2017.

\bibitem{ABLM}
Miklos Abert, Nicolas Bergeron, and Etienne~Le Masson.
\newblock {Eigenfunctions and Random Waves in the Benjamini-Schramm limit}.
\newblock {\em {Journal of Topology and Analysis}}, 2023.
\newblock 47 pages.

\bibitem{Ab}
Milton Abramowitz, Irene~A Stegun, and Robert~H Romer.
\newblock Handbook of mathematical functions with formulas, graphs, and
  mathematical tables, 1988.

\bibitem{Ber16}
Nicolas {Bergeron}.
\newblock {\em {The spectrum of hyperbolic surfaces. Translated from the French
  by Farrell Brumley.}}
\newblock Les Ulis: EDP Sciences; Cham: Springer, 2016.

\bibitem{BZ16}
Yannick Bonthonneau and Steve Zelditch.
\newblock Quantum ergodicity for {E}isenstein functions.
\newblock {\em C. R. Math. Acad. Sci. Paris}, 354(9):907--911, 2016.

\bibitem{Bus10}
Peter Buser.
\newblock {\em Geometry and spectra of compact {R}iemann surfaces}.
\newblock Modern Birkh{\"a}user Classics. Birkh{\"a}user Boston, Inc., Boston,
  MA, 2010.
\newblock Reprint of the 1992 edition.

\bibitem{CdV85}
Yves Colin~de Verdi{{\`e}}re.
\newblock Ergodicit{\'e} et fonctions propres du laplacien.
\newblock {\em Comm. Math. Phys.}, 102(3):497--502, 1985.

\bibitem{FLM15}
Tobias Finis, Erez Lapid, and Werner M\"{u}ller.
\newblock Limit multiplicities for principal congruence subgroups of {${\rm
  GL}(n)$} and {${\rm SL}(n)$}.
\newblock {\em J. Inst. Math. Jussieu}, 14(3):589--638, 2015.

\bibitem{GLMST19}
Clifford Gilmore, Etienne Le~Masson, Tuomas Sahlsten, and Joe Thomas.
\newblock Short geodesic loops and $\mathrm{L}^p$ norms of eigenfunctions on
  large genus random surfaces.
\newblock {\em Geometric and Functional Analysis}, 31(1):62--110, 2021.

\bibitem{GN15}
Alexander Gorodnik and Amos Nevo.
\newblock Quantitative ergodic theorems and their number-theoretic
  applications.
\newblock {\em Bull. Amer. Math. Soc. (N.S.)}, 52(1):65--113, 2015.

\bibitem{Hejhal}
Dennis Hejhal.
\newblock {\em The Selberg Trace Formula for $\mathrm{PSL}(2,\R)$}, volume~2 of
  {\em Lecture Notes in Mathematics (LNM, volume 1001)}.
\newblock sPRINGER, 1983.

\bibitem{Hide21}
Will Hide.
\newblock {Spectral Gap for Weil–Petersson Random Surfaces with Cusps}.
\newblock {\em International Mathematics Research Notices}, 10 2022.
\newblock rnac293.

\bibitem{MageeHide}
Will Hide and Michael Magee.
\newblock Near optimal spectral gaps for hyperbolic surfaces.
\newblock {\em \textit{Ann. of Math.}, to appear, {arXiv:2107.05292}}, 2023.

\bibitem{Iwa02}
Henryk Iwaniec.
\newblock {\em Spectral methods of automorphic forms}, volume~53 of {\em
  Graduate Studies in Mathematics}.
\newblock American Mathematical Society, Providence, RI; Revista Matem{\'a}tica
  Iberoamericana, Madrid, second edition, 2002.

\bibitem{Kat92}
Svetlana Katok.
\newblock {\em Fuchsian groups}.
\newblock Chicago Lectures in Mathematics. University of Chicago Press,
  Chicago, IL, 1992.

\bibitem{LS17}
Etienne Le~Masson and Tuomas Sahlsten.
\newblock Quantum ergodicity and benjamini-schramm convergence of hyperbolic
  surfaces.
\newblock {\em Duke Math. J.}, 166(18):3425--3460, 2017.

\bibitem{Lin06}
Elon Lindenstrauss.
\newblock Invariant measures and arithmetic quantum unique ergodicity.
\newblock {\em Ann. of Math. (2)}, 163(1):165--219, 2006.

\bibitem{LipnowskiWright}
Michael Lipnowski and Alex Wright.
\newblock Towards optimal spectral gaps in large genus.
\newblock {\em arXiv:2103.07496}, 2021.

\bibitem{MN20}
Michael Magee and Fr{\'e}d{\'e}ric Naud.
\newblock Explicit spectral gaps for random covers of riemann surfaces.
\newblock {\em Publications math{\'e}matiques de l'IH{\'E}S}, 132(1):137--179,
  2020.

\bibitem{MNP20}
Michael Magee, Fr{\'e}d{\'e}ric Naud, and Doron Puder.
\newblock A random cover of a compact hyperbolic surface has relative spectral
  gap $\frac{3}{16}-\varepsilon$.
\newblock {\em Geometric and Functional Analysis}, 32(3):595--661, 2022.

\bibitem{Mi}
Maryam Mirzakhani.
\newblock Growth of weil-petersson volumes and random hyperbolic surface of
  large genus.
\newblock {\em J. Differential Geom.}, 94(2):267--300, 2013.

\bibitem{MZ15}
Maryam Mirzakhani and Peter Zograf.
\newblock Towards large genus asymptotics of intersection numbers on moduli
  spaces of curves.
\newblock {\em Geom. Funct. Anal.}, 25(4):1258--1289, 2015.

\bibitem{MonkThesis}
Laura Monk.
\newblock Geometry and spectrum of typical hyperbolic surfaces.
\newblock {\em Universit\'e de Strasbourg \'Ecole doctorale MSII, PhD thesis,
  https://lauramonk.github.io/thesis.pdf}, 2021.

\bibitem{Monk}
Laura Monk.
\newblock Benjamini--schramm convergence and spectra of random hyperbolic
  surfaces of high genus.
\newblock {\em Analysis \& PDE}, 15(3):727--752, 6 2022.

\bibitem{Nel11}
Paul~D. Nelson.
\newblock Equidistribution of cusp forms in the level aspect.
\newblock {\em Duke Math. J.}, 160(3):467--501, 2011.

\bibitem{Nel19}
Paul~D. Nelson.
\newblock Bounds for twisted symmetric square l-functions via half-integral
  weight periods.
\newblock {\em Forum of Mathematics, Sigma}, 8:e44, 2020.

\bibitem{NPS14}
Paul~D. Nelson, Ameya Pitale, and Abhishek Saha.
\newblock Bounds for {R}ankin-{S}elberg integrals and quantum unique ergodicity
  for powerful levels.
\newblock {\em J. Amer. Math. Soc.}, 27(1):147--191, 2014.

\bibitem{Nev98}
Amos Nevo.
\newblock Spectral transfer and pointwise ergodic theorems for semi-simple
  {K}azhdan groups.
\newblock {\em Math. Res. Lett.}, 5(3):305--325, 1998.

\bibitem{OtalRosas}
Jean-Pierre Otal and Eulalio Rosas.
\newblock {Pour toute surface hyperbolique de genre $g$, $\lambda_{2g-2} >
  1/4$}.
\newblock {\em Duke Mathematical Journal}, 150(1):101 -- 115, 2009.

\bibitem{PhilipsSarnak}
R.~Phillips and P.~Sarnak.
\newblock Perturbation theory for the laplacian on automorphic functions.
\newblock {\em Journal of the American Mathematical Society}, 5(1):1--32, 1992.

\bibitem{PS85a}
Ralph~S Phillips and Peter Sarnak.
\newblock On cusp forms for co-finite subgroups of {PSL(2,R)}.
\newblock {\em Inventiones mathematicae}, 80(2):339--364, 1985.

\bibitem{PS85b}
Ralph~S Phillips and Peter Sarnak.
\newblock The weyl theorem and the deformation of discrete groups.
\newblock {\em Communications on Pure and Applied Mathematics}, 38(6):853--866,
  1985.

\bibitem{Ra}
Jean Raimbault.
\newblock On the convergence of arithmetic orbifolds.
\newblock {\em Annales de l'institut Fourier}, 67(6):2547--2596, 2017.

\bibitem{Sarnak1981}
{Peter} Sarnak.
\newblock Asymptotic behavior of periodic orbits of the horocycle flow and
  eisenstein series.
\newblock {\em Communications on Pure and Applied Mathematics}, 34(6):719--739,
  1 1981.

\bibitem{Sch06}
Roman Schubert.
\newblock Upper bounds on the rate of quantum ergodicity.
\newblock {\em Ann. Henri Poincar\'{e}}, 7(6):1085--1098, 2006.

\bibitem{ShenWu}
Yang Shen and Yunhui Wu.
\newblock Arbitrarily small spectral gaps for random hyperbolic surfaces with
  many cusps.
\newblock {\em arXiv preprint arXiv:2203.15681}, 2022.

\bibitem{Sni74}
Alexander~I. {\v{S}}nirel{\cprime}man.
\newblock Ergodic properties of eigenfunctions.
\newblock {\em Uspehi Mat. Nauk}, 29(6(180)):181--182, 1974.

\bibitem{Sou10}
Kannan Soundararajan.
\newblock Quantum unique ergodicity for {${\rm SL}_2(\Bbb Z)\backslash\Bbb H$}.
\newblock {\em Ann. of Math. (2)}, 172(2):1529--1538, 2010.

\bibitem{Wolpert83}
Scott Wolpert.
\newblock On the homology of the moduli space of stable curves.
\newblock {\em Annals of Mathematics}, 118(3):491--523, 1983.

\bibitem{Wright}
Alex Wright.
\newblock A tour through mirzakhani's work on moduli spaces of riemann
  surfaces.
\newblock {\em Bulletin of the American Mathematical Society}, 57(3):359--408,
  2020.

\bibitem{WuXue}
Yunhui Wu and Yuhao Xue.
\newblock Random hyperbolic surfaces of large genus have first eigenvalues
  greater than $\frac{3}{16}-\varepsilon$.
\newblock {\em Geometric and Functional Analysis}, 32(2):340--410, 2022.

\bibitem{Zel87}
Steven Zelditch.
\newblock Uniform distribution of eigenfunctions on compact hyperbolic
  surfaces.
\newblock {\em Duke Math. J.}, 55(4):919--941, 1987.

\bibitem{Zel91}
Steven Zelditch.
\newblock Mean lindel{\"o}f hypothesis and equidistribution of cusp forms and
  eisenstein series.
\newblock {\em Journal of functional analysis}, 97(1):1--49, 1991.

\bibitem{Zel94}
Steven Zelditch.
\newblock On the rate of quantum ergodicity. {I}. {U}pper bounds.
\newblock {\em Comm. Math. Phys.}, 160(1):81--92, 1994.

\end{thebibliography}

\end{document}